\newcommand*{\barfix}[2][.175ex]{%
  \mathpalette{\@barfix{#1}}{#2}%
}
\newcommand*{\@barfix}[3]{%
  \vbox{%
    \kern#1\relax
    \hbox{$#2#3\m@th$}%
  }%
}
\newtheorem{thm}{Theorem}
\newtheorem{lemma}[thm]{Lemma}
\newtheorem{claim}[thm]{Claim}
\newtheorem{question}[thm]{Question}
\newtheorem{conjecture}[thm]{Conjecture}
\newcommand{\footremember}[2]{%
    \footnote{#2}
    \newcounter{#1}
    \setcounter{#1}{\value{footnote}}%
}
\title{Percolation on Irregular High-dimensional Product Graphs}
\author{%
Sahar Diskin \footremember{alley}{School of Mathematical Sciences, Tel Aviv University, Tel Aviv 6997801, Israel. Email: sahardiskin@mail.tau.ac.il.}%
\and Joshua Erde \footremember{trailer}{Institute of Discrete Mathematics, Graz University of Technology, Steyergasse 30, 8010 Graz, Austria. Email: erde@math.tugraz.at. Research supported in part by FWF P36131.}%
\and Mihyun Kang \footremember{alley2}{Institute of Discrete Mathematics, Graz University of Technology, Steyergasse 30, 8010 Graz, Austria. Email: kang@math.tugraz.at. Research supported in part by FWF W1230 and FWF I6502. Part of this work was done while the author was visiting the Simons Institute for the Theory of Computing.}%
\and Michael Krivelevich \footremember{trailer2}{School of Mathematical Sciences, Tel Aviv University, Tel Aviv 6997801, Israel. Email:
krivelev@tauex.tau.ac.il. Research supported in part by USA–Israel BSF grant 2018267.}%
}
\begin{document}
\maketitle

\begin{abstract}
We consider bond percolation on high-dimensional product graphs $G=\square_{i=1}^tG^{(i)}$, where $\square$ denotes the Cartesian product. We call the $G^{(i)}$ the base graphs and the product graph $G$ the host graph. Very recently, Lichev \cite{L22} showed that, under a mild requirement on the isoperimetric properties of the base graphs, the component structure of the percolated graph $G_p$ undergoes a phase transition when $p$ is around $\frac{1}{d}$, where $d$ is the average degree of the host graph.

In the supercritical regime, we strengthen Lichev's result by showing that the giant component is in fact unique, with all other components of order $o(|G|)$, and determining the sharp asymptotic order of the giant. Furthermore, we answer two questions posed by Lichev \cite{L22}: firstly, we provide a construction showing that the requirement of bounded-degree is necessary for the likely emergence of a linear order component; secondly, we show that the isoperimetric requirement on the base graphs can be, in fact, super-exponentially small in the dimension. 
Finally, in the subcritical regime, we give an example showing that in the case of \textit{irregular} high-dimensional product graphs, there can be a \textit{polynomially} large component with high probability, very much unlike the quantitative behaviour seen in the Erd\H{o}s-R\'enyi random graph and in the percolated hypercube, and in fact in any \textit{regular} high-dimensional product graphs, as shown by the authors in a companion paper \cite{DEKK22}.
\end{abstract}

\section{Introduction and main results}
In 1957, Broadbent and Hammersley \cite{BH57} initiated the study of percolation theory in order to model the flow of fluid through a medium with randomly blocked channels. In \emph{(bond) percolation}, given a graph $G$, the percolated random subgraph $G_p$ is obtained by retaining every edge of the \emph{host graph} $G$ independently and with probability $p$. If we take the host graph $G$ to be the complete graph $K_{d+1}$ then $G_p$ coincides with the well-known \emph{binomial random graph model} $G(d+1,p)$. In their seminal paper from 1960, Erd\H{o}s and R\'enyi \cite{ER60} showed that $G(d+1,p)$\footnote{In fact, Erd\H{o}s and R\'enyi worked in the closely related \emph{uniform} random graph model $G(d+1,m)$. As we mainly consider graphs with average degree $d$, we use the slightly unusual notation of $G(d+1,p)$ instead of $G(n,p)$, to make the comparison of the results simpler.} undergoes a dramatic phase transition, with respect to its component structure, when $p$ is around $\frac{1}{d}$.  More precisely, given a constant $\epsilon>0$ let us define $y=y(\epsilon)$ to be the unique solution in $(0,1)$ of the equation
\begin{align}\label{survival prob}
    y=1-\exp\left(-(1+\epsilon)y\right),
\end{align}
where we note that $y$ is an increasing continuous function on $(0,\infty)$ with $y(\epsilon) = 2\epsilon - O(\epsilon^2)$.
\begin{thm}[\cite{ER60}]\label{ER thm}
Let $\epsilon>0$ be a small enough constant. Then, with probability tending to one as $d \to \infty$,
\begin{itemize}
    \item [(a)] if $p=\frac{1-\epsilon}{d}$, then all the components of $G(d+1,p)$ are of order $O_{\epsilon}\left(\log d \right)$; and,
    \item [(b)] if $p=\frac{1+\epsilon}{d}$, then there exists a unique \emph{giant} component in $G(d+1,p)$ of order $(1+o(1))y(\epsilon)d$. Furthermore, all the other components of $G(d+1,p)$ are of order $O_{\epsilon}\left(\log d\right)$.
\end{itemize}
\end{thm}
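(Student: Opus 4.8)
I would prove this by the classical route: analyse the component $C(v)$ of a fixed vertex $v$ in $G(d+1,p)$ via a breadth-first exploration process, and compare it with a Galton--Watson branching process. In the exploration, at each step we query a discovered-but-unprocessed vertex and reveal its edges to the currently undiscovered vertices; if $s$ vertices have been discovered so far, the number of new neighbours has a $\mathrm{Bin}(d+1-s,p)$-type distribution, which for $s = o(d)$ is sandwiched between $\mathrm{Bin}\big((1-o(1))d,p\big)$ and $\mathrm{Bin}(d,p)$, both having mean $(1\pm o(1))\, dp$. Since $\mathrm{Bin}(d,p)\to\mathrm{Poisson}(dp)$ and $dp = 1\pm\epsilon$, the exploration is, up to scale $o(d)$, a small perturbation of a $\mathrm{Poisson}(1\pm\epsilon)$ branching process. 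Writing $y = y(\epsilon)$ for the solution of \eqref{survival prob}, note that \eqref{survival prob} is exactly the statement that $1-y$ is the extinction probability of a $\mathrm{Poisson}(1+\epsilon)$ Galton--Watson tree, since that extinction probability $q$ satisfies $q = e^{-(1+\epsilon)(1-q)}$; moreover the extinction probability of the $\mathrm{Bin}(d,p)$-process converges to $1-y$ as $d\to\infty$.

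For part (a), with $p = \frac{1-\epsilon}{d}$ the exploration from any vertex is dominated by a Galton--Watson process with $\mathrm{Bin}(d,p)$ offspring, which is subcritical with mean $1-\epsilon < 1$. A subcritical process has total progeny with an exponential tail, $\Pr[\,|C(v)|\ge k\,]\le e^{-c(\epsilon)k}$, which one obtains either from the standard generating-function estimate or directly by a Chernoff bound on the sum of the dominating i.i.d.\ binomials revealed in the first $k$ steps (this sum has mean at most $(1-\epsilon)k$). Taking $k = C_\epsilon \log d$ with $C_\epsilon$ large enough and a union bound over all $d+1$ starting vertices shows that whp every component of $G(d+1,p)$ has order $O_\epsilon(\log d)$. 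The same tail estimate applies verbatim to the ``dies early'' branch of the exploration in the supercritical regime, so once we know $G(d+1,\frac{1+\epsilon}{d})$ has a unique linear-order component, it shows all other components there have order $O_\epsilon(\log d)$.

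For part (b), with $p = \frac{1+\epsilon}{d}$, set $L = K\log d$, call $v$ \emph{small} if $|C(v)| < L$ and \emph{large} otherwise, and let $X$ count the small vertices. A first-moment computation gives $\mathbb{E}[X] = (d+1)\Pr[|C(v)| < L]$, and the branching-process comparison --- using that while $|C(v)| < L$ the effective offspring mean drifts only by $O(L/d) = o(1)$, that the $\mathrm{Bin}$-process extinction probability tends to $1-y$, and that a supercritical process conditioned to survive $L$ generations has progeny $\gg L$ whp --- yields $\Pr[|C(v)| < L] = 1-y+o(1)$, hence $\mathbb{E}[X] = (1-y+o(1))d$. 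A second-moment computation bounding $\Pr[|C(u)| < L,\ |C(v)| < L]$ (splitting on whether $u,v$ lie in the same component, and using that two small components explored from distinct vertices decouple) gives $\mathrm{Var}(X) = o(d^2)$, so by Chebyshev $X = (1-y+o(1))d$ whp; thus the large vertices number $(y+o(1))d$. Uniqueness follows by sprinkling: write $1-p = (1-p_1)(1-p_2)$ with $p_1 = \frac{1+\epsilon/2}{d}$ still supercritical and $p_2 = \Theta(1/d)$, expose $G_{p_1}$ first (it has at least one component of order $\ge \delta d$ for some $\delta = \delta(\epsilon) > 0$ by the above), and observe that any two components of order $\ge \delta d$ are joined in the second round with probability $1 - (1-p_2)^{(\delta d)^2} = 1 - e^{-\Omega(d)}$; a union bound over the at most $1/\delta$ such components gives a unique giant. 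Finally, a short separate argument --- bounding the expected number of vertices in components of order in $[C_\epsilon\log d,\ \delta d]$ and noting that the exploration from such a vertex must survive $\gg\log d$ steps and is then overwhelmingly likely to reach size $\delta d$ --- rules out intermediate components, so the giant has order $(y+o(1))d$ and all other components have order $O_\epsilon(\log d)$.

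The main obstacle is the \emph{sharp} constant $y(\epsilon)$: one must control the exploration precisely enough to see that over the relevant window of $o(d)$ explored vertices it is genuinely indistinguishable from the $\mathrm{Poisson}(1+\epsilon)$ branching process, so that $\Pr[v \text{ small}] = 1-y+o(1)$ rather than merely bounded away from $0$ and $1$; and one must carry out the second-moment estimate $\mathrm{Var}(X) = o(d^2)$, whose heart is the decoupling of the events ``$u$ small'' and ``$v$ small''. The existence of a linear-order component and the sprinkling proof of uniqueness are comparatively robust and are not the bottleneck.
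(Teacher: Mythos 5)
The paper does not actually prove Theorem~\ref{ER thm}: it is quoted as a classical result of Erd\H{o}s and R\'enyi \cite{ER60}, so there is no internal proof to compare against. Your argument is the standard modern branching-process proof (exploration process coupled to a Galton--Watson tree, first/second moment on the count of ``small'' vertices, sprinkling for uniqueness, and a separate bound ruling out intermediate-sized components), which is different in spirit from the original \cite{ER60} approach via enumeration of tree and unicyclic components, but is nowadays the canonical route and is correct in outline.

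One place where the logic as written is slightly out of order: in the sprinkling step you write that $G_{p_1}$ ``has at least one component of order $\ge \delta d$ \ldots by the above,'' but what the first-/second-moment argument delivers is only that $(y+o(1))d$ vertices lie in components of order $\ge K\log d$; that on its own does not yet produce a single linear-order component. The cleaner version of the sprinkling is to take a component-respecting partition of the $(y+o(1))d$ ``large'' vertices into two parts each of size $\ge c d$, note that the number of such partitions is at most $2^{d/(K\log d)}$ (each large component has $\ge K\log d$ vertices), and union-bound this against $\exp(-\Omega(p_2\,c^2 d^2)) = \exp(-\Omega(d))$; this shows directly that after sprinkling almost all large vertices lie in a single component of order $(y+o(1))d$, without needing to posit a $\delta d$-sized component in advance. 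With this reordering the intermediate-component argument you add at the end then cleans up the second-largest component, and the proof is complete. You correctly identify the two genuine bottlenecks --- matching the sharp constant $y(\epsilon)$ and the variance estimate --- so apart from this ordering issue the proposal is sound.
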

We refer the reader to \cite{B01, FK16, JLR00} for a systematic coverage of random graphs, and to the monographs \cite{BR06, G99, K82} on percolation theory.

This phenomenon of such a sharp change in the order of the largest component has been subsequently studied in many other percolation models. Some well-studied examples come from percolation on lattice-like structures with fixed dimension (see \cite{HH17} for a survey on many results in this subject). Another extensively studied model is the percolated hypercube, where the host graph is the \emph{$d$-dimensional hypercube} $Q^d$. Indeed, answering a question of Erd\H{o}s and Spencer \cite{ES79}, Ajtai, Koml\'os, and Szemer\'edi \cite{AKS81} proved that $Q^d_p$ undergoes a phase transition quantitatively similar to the one which occurs in $G(d+1,p)$, and their work was later extended by Bollob\'as, Kohayakawa, and \L{}uczak \cite{BKL92}.  
\begin{thm}[\cite{AKS81, BKL92}] \label{hypercube thm}
Let $\epsilon>0$ be a small enough constant. Then, with probability tending to one as $d \to \infty$,
\begin{itemize}
    \item [(a)] if $p=\frac{1-\epsilon}{d}$, then all the components of $Q^d_p$ are of order $O_{\epsilon}(d)$; and,
    \item [(b)] if $p=\frac{1+\epsilon}{d}$, then there exists a unique giant component of order $(1+o(1))y(\epsilon)2^d$. Furthermore, all the other components of $Q^d_p$ are of order $O_{\epsilon}(d)$.
\end{itemize}
\end{thm}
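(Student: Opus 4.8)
The plan is to treat the two regimes separately (following the strategy of \cite{AKS81,BKL92}): the subcritical estimate is soft, while the supercritical statement rests on a two-round sprinkling argument whose hard core is the \emph{uniqueness} of the giant. For $p=\frac{1-\epsilon}{d}$ I would explore the component $C(v)$ of a fixed vertex $v$ in breadth-first order, so that each processed vertex contributes its at most $d$ not-yet-discovered neighbours, each of which joins $C(v)$ independently with probability $p$; thus the exploration is stochastically dominated by a Galton--Watson tree with offspring distribution $\mathrm{Bin}(d,p)$ of mean $1-\epsilon<1$. The standard large-deviation bound for the total progeny of a subcritical branching process gives $\mathbb{P}(|C(v)|\ge k)\le e^{-\psi(\epsilon)k}$ for some $\psi(\epsilon)>0$, and taking $k=C(\epsilon)d$ with $C(\epsilon)>\ln 2/\psi(\epsilon)$ together with a union bound over the $2^{d}$ vertices of $Q^{d}$ shows that whp every component of $Q^{d}_{p}$ has order $O_{\epsilon}(d)$.

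For $p=\frac{1+\epsilon}{d}$ I would write $Q^{d}_{p}=Q^{d}_{p_1}\cup Q^{d}_{p_2}$ with $p_1=\frac{1+\epsilon/2}{d}$ and $p_2=\Theta_{\epsilon}(1/d)$. The workhorse is a ``no medium components'' estimate: bounding the number of subtrees of $Q^{d}$ on $s$ vertices through a fixed vertex by $(ed)^{s}$ and invoking the edge-isoperimetric inequality $|\partial_{e}S|\ge|S|(d-\log_{2}|S|)$, one obtains for $s\le 2^{\delta d}$ that
\[
  \mathbb{P}\big(|C_{p_1}(v)|=s\big)\ \le\ (ed)^{s}\,p_1^{\,s-1}(1-p_1)^{s(d-\log_{2}s)}\ \le\ d\cdot\rho(\epsilon)^{s},
\]
where $\rho(\epsilon)=e\big(1+\tfrac{\epsilon}{2}\big)e^{-(1+\epsilon/2)(1-\delta)}<1$ once $\delta=\delta(\epsilon)$ is small enough; summing over $s\in[C(\epsilon)d,2^{\delta d}]$ and union-bounding shows that whp no component of $Q^{d}_{p_1}$ --- and, by the same computation, of $Q^{d}_{p}$ --- has order in this window. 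Conversely, while the explored set $S$ has size at most $2^{d/\log d}$ it spans at most $\tfrac12|S|\log_{2}|S|=o(d|S|)$ internal edges, so the exploration \emph{dominates} a Galton--Watson process with offspring $\mathrm{Bin}(d-o(d),p_1)$, whose survival probability converges to that of $\mathrm{Poisson}(1+\tfrac{\epsilon}{2})$, namely $y(\epsilon/2)$. Combining this with the no-medium estimate yields $\mathbb{P}(|C_{p_1}(v)|\ge 2^{\delta d})=y(\epsilon/2)+o(1)$, and a concentration argument gives that whp $(1+o(1))y(\epsilon/2)2^{d}$ vertices of $Q^{d}_{p_1}$ lie in components of order at least $2^{\delta d}$.

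The step I expect to be hardest --- and the one that genuinely separates the hypercube from $G(n,p)$ --- is showing that the $p_2$-sprinkling merges all the order-$\ge 2^{\delta d}$ components of $Q^{d}_{p_1}$ into a single giant. For $G(n,p)$ this is automatic since two linear-sized vertex sets span $\Omega(n^{2})$ potential edges; in $Q^{d}$, by contrast, two vertex sets even of size $\Omega(2^{d})$ may span \emph{no} edge at all (take the low- and high-weight Hamming levels), so a naive sprinkling fails outright. The resolution must exploit the recursive product structure $Q^{d}=Q^{\lfloor d/2\rfloor}\,\square\,Q^{\lceil d/2\rceil}$ (iterated): one shows that a component of order at least $2^{\delta d}$ is ``well spread'', occupying a non-negligible fraction of many $(d-O(\log d))$-dimensional sub-hypercubes, so that any two such components are simultaneously dense inside a common sub-hypercube, where the $p_2$-edges join them whp; a cascading union-bound over the large components then shows that whp a single one absorbs all of them. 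Consequently $Q^{d}_{p}$ has whp a unique component of order $\ge 2^{\delta d}$, the giant.

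It remains to pin the order of this giant down to $(1+o(1))y(\epsilon)2^{d}$. Let $N$ denote the number of vertices of $Q^{d}_{p}$ in components of order at least $2^{\delta d}$. By vertex-transitivity $\mathbb{E}[N]=2^{d}\,\mathbb{P}(|C_{p}(v)|\ge 2^{\delta d})$, and $\mathbb{P}(|C_{p}(v)|\ge 2^{\delta d})=y(\epsilon)+o(1)$: the upper bound is the $\mathrm{Bin}(d,p)$ Galton--Watson domination (survival probability converging to that of $\mathrm{Poisson}(1+\epsilon)$, i.e.\ $y(\epsilon)$) and the lower bound is the exploration estimate together with the no-medium bound, which bridges from order $2^{d/\log d}$ to order $2^{\delta d}$. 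Toggling a single edge of $Q^{d}$ changes $N$ by at most $2\cdot 2^{\delta d}$ (and only when two components each of order $<2^{\delta d}$ merge into one of order $\ge 2^{\delta d}$), so for $\delta<\tfrac12$ Azuma's inequality applied to the edge-exposure martingale gives $N=(y(\epsilon)+o(1))2^{d}$ whp. By the no-medium bound every component has order $O_{\epsilon}(d)$ or $\ge 2^{\delta d}$, and by the uniqueness argument the giant is the unique component of the latter type; hence whp $|C_{1}(Q^{d}_{p})|=N=(1+o(1))y(\epsilon)2^{d}$ and every other component has order $O_{\epsilon}(d)$, as required.
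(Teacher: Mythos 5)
This theorem is cited in the paper from \cite{AKS81, BKL92} without proof, so there is no internal argument of the paper to compare against; the closest analogues are Theorem~\ref{regular product graphs} (the regular bounded-order case, proved in the companion paper \cite{DEKK22}) and the general supercritical machinery of Section~\ref{sec bounded degree}, neither of which is specialised to $Q^d$. Your outline follows the classical strategy of the cited references, and the subcritical bound, the no-medium-components estimate via the rooted-subtree count and Harper's edge-isoperimetric inequality, and the Azuma concentration (Lipschitz constant $2\cdot 2^{\delta d}$, valid for $\delta<\tfrac12$) are all correctly set up.

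Two steps are, however, thinner than a complete proof would need. The inference that the supercritical BFS ``dominates a Galton--Watson process with offspring $\mathrm{Bin}(d-o(d),p_1)$'' because the explored set spans $o(d|S|)$ internal edges is not valid as stated: that bound only controls the \emph{average} number of already-discovered neighbours per vertex, whereas a connected $S\subseteq Q^d$ can contain a vertex with as many as $\min(d,|S|-1)$ of its neighbours inside $S$, so step-by-step stochastic domination does not follow from the isoperimetric count alone. One must either bound probabilistically how many discovered neighbours a BFS-processed vertex is likely to have, or replace per-step domination by an aggregate accounting over the whole exploration. Separately, the sprinkling/uniqueness step --- which you rightly identify as the crux, and the point where $Q^d$ genuinely differs from $G(n,p)$ --- is left as a heuristic; turning ``a large component is well spread over many $(d-O(\log d))$-dimensional subcubes'' into a precise density statement, and then running the merging union bound over the resulting partitions, is where most of the actual work in \cite{AKS81} and \cite{BKL92} lives.
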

Note that, since $|V(Q^d)| = 2^d$, in both Theorems \ref{ER thm} and \ref{hypercube thm} the likely order of the largest component changes from logarithmic in the order of the host graph in the subcritical regime, to linear in the order of the host graph in the supercritical regime, whilst the second-largest component in the supercritical regime remains of logarithmic order. Furthermore, in both models this giant component is the unique component of linear order, and covers the same asymptotic fraction of the vertices in each case. We informally refer to this quantitative behaviour as the \emph{Erd\H{o}s-R\'enyi component phenomenon}.

Recently, Lichev \cite{L22} initiated the study of percolation on some families of high-dimensional graphs, those arising from the product of many bounded-degree graphs. Given a sequence of graphs $G^{(1)},\ldots, G^{(t)}$, the Cartesian product of $G^{(1)},\ldots, G^{(t)}$, denoted by $G=G^{(1)}\square \cdots \square G^{(t)}$ or $G=\square_{i=1}^{t}G^{(i)}$, is the graph with the vertex set
\begin{align*}
    V(G)=\left\{v=(v_1,v_2,\ldots,v_t) \colon v_i\in V(G^{(i)}) \text{ for all } i \in [t]\right\},
\end{align*}
and the edge set
\begin{align*}
   E(G)=\left\{uv \colon \begin{array}{l} \text{there is some } i\in [t] \text{ such that }  u_iv_i\in E\left(G^{(i)}\right)\\  \text{ and } u_j=v_j  
    \text{ for all } i \neq j  \end{array}\right\}.
\end{align*}
We call $G^{(i)}$ the \textit{base graphs of} $G$. Throughout the rest of the paper, we denote by $|G|$ the number of vertices of the graph $G$, and use $t$ for the number of base graphs in the product. We denote by $d\coloneqq d(G)$ the average degree of a given graph $G$.

Considering percolation in these high-dimensional product graphs, Lichev \cite{L22} showed the existence of a threshold for the appearance of a component of linear order in these models, under some mild assumptions on the \emph{isoperimetric constants} of the base graphs. The isoperimetric constant $i(H)$ of a graph $H$ is a measure of the edge-expansion of $H$ and is given by
\[
i(H)=\min_{\begin{subarray}{c}S\subseteq V(H),\\
|S|\le |V(H)|/2\end{subarray}}\frac{e(S, S^C)}{|S|}.
\]

Throughout the paper, all asymptotics are with respect to $t$, and we use the standard Landau notations (see e.g., \cite{JLR00}).  We often state results for properties that hold \textbf{whp}, (short for \lq\lq with high probability\rq\rq), that is, with probability tending to $1$ as $t$ tends to infinity.

\begin{thm}[{\cite{L22}}]\label{product graphs}
Let $C, \gamma>0$ be constants and let $\epsilon>0$ be a small enough constant. Let $G^{(1)},\ldots, G^{(t)}$ be graphs such that $1\le \Delta\left(G^{(j)}\right)\le C$ and $i\left(G^{(j)}\right)\ge t^{-\gamma}$ for all $j\in[t]$. Let $G=\square_{j=1}^{t}G^{(j)}$. Then, \textbf{whp} 
\begin{enumerate}[(a)]
    \item\label{i:subcritical} if $p=\frac{1-\epsilon}{d}$, then all the components of $G_p$ are of order at most $\exp\left(-\frac{\epsilon^2t}{9C^2}\right)|G|$;
    \item\label{i:supercritical} if $p=\frac{1+\epsilon}{d}$, then there exists a positive constant $c_1=c_1(\epsilon,C,\gamma)$ such that the largest component of $G_p$ is of order at least $c_1|G|$.
\end{enumerate}
\end{thm}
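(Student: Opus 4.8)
Write $n\defeq|G|$ and $d\defeq d(G)$. Summing $1\le\delta\bigl(G^{(j)}\bigr)\le\Delta\bigl(G^{(j)}\bigr)\le C$ over $j\in[t]$ gives
\[ t \le \delta(G) \le d \le \Delta(G) \le Ct, \]
so all degrees of $G$ agree up to the fixed factor $C$ and $p$ is of order $1/t$, with $\tfrac{1-\epsilon}{Ct}\le p\le\tfrac{1+\epsilon}{t}$. The point to keep in mind is that the usual comparison of the component--exploration of a vertex with a branching process is not directly available: since a vertex of $G$ may have any degree in $[t,Ct]$, the quantity $p\,\Delta(G)$ is in general neither below nor above $1$. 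The feature of the product structure that rescues this is that the $t$ coordinates of a uniformly random vertex of $G$ are independent and each carries a degree in $[1,C]$, so by a Hoeffding bound all but an $\exp\!\bigl(-\Omega(\epsilon^2t/C^2)\bigr)$-fraction of the vertices of $G$ have degree within $\tfrac{\epsilon}{4}d$ of $d$; moreover moving to a neighbour changes the degree by at most $C$, so degrees are ``locally almost constant'' along $G$. Both parts rest on these two observations.

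For part (a), call $v$ \emph{heavy} if $\deg_G(v)\ge(1+\tfrac\epsilon2)d$ and \emph{light} otherwise; by the above there are at most $B\defeq\exp\!\bigl(-\Omega(\epsilon^2t/C^2)\bigr)n$ heavy vertices. Every light vertex $v$ satisfies $p\deg_G(v)<(1+\tfrac\epsilon2)(1-\epsilon)<1-\tfrac\epsilon2$, so exploring its component in the induced percolated subgraph $G[\mathrm{light}]_p$ is dominated by a subcritical Galton--Watson process with mean offspring below $1-\tfrac\epsilon2$, and a union bound over the (at most $n$) light vertices shows that \textbf{whp} every component of $G[\mathrm{light}]_p$ has order $O_\epsilon(\log n)$. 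Now take any component $\mathcal C$ of $G_p$: deleting its heavy vertices leaves a disjoint union of light-components, each joined to the heavy part of $\mathcal C$ by at least one $G_p$-edge (if $\mathcal C$ has no heavy vertices it is itself a single light-component), so the number of light-components in $\mathcal C$ is at most the number of $G_p$-edges incident to heavy vertices, which is \textbf{whp} $O(B)$ since its expectation is at most $p\,\Delta(G)\,B=O(B)$. Hence $|\mathcal C|=O_\epsilon(B\log n)$ \textbf{whp}, which is below $\exp\!\bigl(-\epsilon^2 t/9C^2\bigr)n$ once the polynomial factors are absorbed into the exponent; summing over the choices of $\mathcal C$ completes the proof. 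Care is needed to reveal the percolation in the right order — first $G[\mathrm{light}]_p$, then the light--heavy edges — so that the (rare) large light-components are unlikely to attach to any particular $\mathcal C$.

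For part (b) I would show directly that when $p=\tfrac{1+\epsilon}{d}$ a constant fraction of the vertices lie in components of linear order. Start the exploration of $\mathcal C_v$ at one of the $(1-o(1))n$ vertices of degree within $\tfrac\epsilon4d$ of $d$, so that $p\deg_G(v)\ge1+\tfrac\epsilon2$; when the exploration processes a vertex $u$ it discovers $\mathrm{Bin}\bigl(\deg_G(u)-k_u,\,p\bigr)$ new vertices, where $k_u$ is the number of already-explored neighbours of $u$. As long as the explored set $S$ satisfies $e_G(S)\le c\,\epsilon\,d\,|S|$ for a suitable small constant $c$ — equivalently the average of $k_u$ over $S$ stays below $2c\,\epsilon d$ — the exploration dominates a supercritical branching process with mean offspring $1+\Omega(\epsilon)$. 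One then argues that the explored component is typically no denser in $G$ than a uniformly random vertex subset of the same size, for which $e_G(S)\approx\tfrac{d\,|S|^2}{2n}$, so the density condition persists until $|S|=\Omega_\epsilon(n)$; ruling out the pathological clustered configurations is exactly where I expect the isoperimetric hypothesis $i\bigl(G^{(j)}\bigr)\ge t^{-\gamma}$ to be used, to prevent the exploration from being trapped in a poorly-connected part of $G$. It follows that each such $v$ has $\mathbb P\bigl(|\mathcal C_v|\ge\Omega_\epsilon(n)\bigr)\ge\Omega_\epsilon(1)$, hence in expectation — and, after a standard concentration argument, \textbf{whp} — at least $\Omega_\epsilon(n)$ vertices lie in components of order $\Omega_\epsilon(n)$; since there can be at most $O_\epsilon(1)$ components of linear order, the largest of them has order $\Omega_\epsilon(n)=c_1n$.

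The step I expect to be the main obstacle is quantitatively controlling the interface between the subcritical and supercritical parts of $G$: in part (a), bounding both the number and the total size of the light-components that can attach to the heavy part of a component; and in part (b), proving that the explored component stays as sparse in $G$ as a random set of its size — that is, that the exploration does not accumulate too many collisions — all the way up to linear size, which is precisely where one must use the isoperimetric properties of the base graphs and not merely their bounded degree.
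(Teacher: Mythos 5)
Theorem~\ref{product graphs} is quoted from Lichev~\cite{L22} and not reproved in this paper, so there is no in-paper proof to compare against; I judge the proposal on its own and against the paper's proof of the stronger Theorem~\ref{isoperimetric}. Your part~(a) sketch (heavy/light split, subcritical Galton--Watson domination inside the light subgraph, and counting heavy-incident $G_p$-edges to bound how many light-components a single component can glue together) is reasonable in outline. But note that the hypotheses place no bound on $|G|$ in terms of $t$ --- bounded degree together with $i(G^{(j)})\ge t^{-\gamma}$ still allows base graphs of arbitrarily large order --- so your final $O(B\log|G|)$ estimate does not automatically fall below $\exp(-\epsilon^2t/(9C^2))|G|$; that absorption step needs an argument, as does hitting the specific constant $9C^2$.

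The serious gap is in part~(b), precisely where you flag the difficulty. Your plan rests on showing that the BFS-explored set $S$ stays about as sparse in $G$ as a uniformly random vertex set of the same size, $e_G(S)\approx d|S|^2/(2|G|)$, all the way up to $|S|=\Omega(|G|)$. There is no reason for this to hold: the explored set is connected in a highly structured host graph, not random, and connected subsets of product graphs can be far denser than random sets of the same size (already in $Q^t$ a subcube of size $|G|^{1/2}$ has $e_G(S)=\Theta(d|S|)$ rather than $\Theta(d|S|^2/|G|)$). Worse, the isoperimetric hypothesis $i(G^{(j)})\ge t^{-\gamma}$ bounds $e_G(S,S^C)$ from \emph{below}, not $e_G(S)$ from above, so it says nothing about collisions; invoking it ``to prevent the exploration from being trapped'' is not a mechanism. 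The approach that actually works --- in \cite{L22} and in the proof of Theorem~\ref{isoperimetric} here --- is qualitatively different: one never grows a single exploration to linear size. Instead, the Projection Lemma (Lemma~\ref{projection lemma}) is used to run many \emph{independent} branching explorations inside pairwise disjoint projections of $G$, each growing only to a size polynomial (or mildly superpolynomial) in $t$, which keeps every exploration tree-like and sidesteps the collision problem entirely; a density argument (Lemma~\ref{density lemma}) shows these modest components are well-spread; and a sprinkling step (Lemma~\ref{l:sprinkling}) then merges them into a giant. The isoperimetric constant is used at the sprinkling stage, to produce many short edge-disjoint paths between the two sides of any balanced component-respecting partition --- not to control a single exploration from within.
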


Lichev asked if the conditions in Theorem \ref{product graphs} could be weakened.

\begin{question} [{\cite[Question 5.1]{L22}}]\label{q:maxdeg}
Does Theorem \ref{product graphs} still hold without the assumption on the maximum degrees of the $G^{(j)}\mathord{?}$
\end{question}

\begin{question} [{\cite[Question 5.2]{L22}}]\label{q:expdecay}
Does Theorem \ref{product graphs} still hold if the isoperimetric constant $i\left(G^{(j)}\right)$ decreases faster than a polynomial function of $t\mathord{?}$
\end{question}

Furthermore, in comparison to Theorems \ref{ER thm} and \ref{hypercube thm}, we note that Theorem \ref{product graphs} only gives a rough, qualitative description of the phase transition, and it is natural to ask if a more precise, quantitative description of the component structure of these percolated product graphs in the sub- and super-critical regimes can be given, in the vein of the Erd\H{o}s-R\'enyi component phenomenon, and if not in general, then under which additional assumptions?

In a recent paper \cite{DEKK22}, the authors gave a partial answer to this final question, showing that it is sufficient to assume that the base graphs are all \emph{regular} and of bounded order.

\begin{thm}[Informal {\cite{DEKK22}}]\label{regular product graphs}
Let $C>1$ be a constant and let $G=\square_{j=1}^{t}G^{(j)}$ be a product graph where $G^{(j)}$ is connected and regular and $1 < \left|V\left(G^{(j)}\right)\right| \leq C$ for each $j \in [t]$. Then $G_p$ undergoes a phase transition around $p=\frac{1}{d}$, which exhibits the Erd\H{o}s-R\'enyi component phenomenon.
\end{thm}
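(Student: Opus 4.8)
The plan is to establish both halves of the Erd\H{o}s--R\'enyi component phenomenon by adapting the \emph{sprinkling} method of Ajtai--Koml\'os--Szemer\'edi and Bollob\'as--Kohayakawa--\L uczak (compare Theorem~\ref{hypercube thm}) together with a comparison of component explorations in $G_p$ with Galton--Watson branching processes. Since the Cartesian product of regular graphs is regular, $G$ is $d$-regular with $d=\sum_{j=1}^{t}d_j$, where $1\le d_j\le C-1$ is the degree of $G^{(j)}$; thus $d=\Theta(t)\to\infty$, the number of vertices revealed at each step of a breadth-first search exploration of $G_p$, $p=\frac{1\pm\epsilon}{d}$, is distributed as $\mathrm{Bin}(d',p)$ with $d'\le d$, and $\mathrm{Bin}(d,p)\to\mathrm{Poisson}(1\pm\epsilon)$, whose branching process has survival probability exactly the quantity $y(\epsilon)$ of~\eqref{survival prob}. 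Two structural features of $G$ will be used. First, \emph{uniform expansion}: since each $G^{(j)}$ is connected on at most $C$ vertices we have $i\big(G^{(j)}\big)\ge\frac{1}{\lfloor C/2\rfloor}$, and a standard fact about the isoperimetry of Cartesian products yields a uniform bound $i(G)\ge\beta$ for a constant $\beta=\beta(C)>0$; this is precisely where the bounded-\emph{order} hypothesis is essential and where the situation departs from Theorem~\ref{product graphs}, in which $i\big(G^{(j)}\big)$ may be as small as $t^{-\gamma}$ and $i(G)$ may tend to $0$. Second, a quantitative \emph{edge-isoperimetric inequality for small sets} in Cartesian products, in the spirit of Harper's theorem on the cube: for $|S|$ up to a threshold $m_0=m_0(t)\to\infty$, which may be taken of the form $|G|^{c\epsilon}$, every connected $S$ with $|S|\le m_0$ satisfies $e_G(S,V\setminus S)\ge(1-\epsilon/3)d|S|$, i.e.\ small connected sets are almost as sparse as in a tree.

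In the subcritical regime $p=\frac{1-\epsilon}{d}$, the component of any vertex in $G_p$ is stochastically dominated by the total progeny of a $\mathrm{Bin}(d,p)$-Galton--Watson process, which is subcritical of mean $1-\epsilon$, so $\Pr\big[\,|C_{G_p}(v)|\ge k\,\big]\le\exp(-c\epsilon^2 k)$. Choosing $k=\Theta_\epsilon(\log|G|)$ and taking a union bound over the $|G|\le C^t$ vertices shows that \textbf{whp} every component of $G_p$ has order $O_\epsilon(\log|G|)$, exactly as in Theorems~\ref{ER thm} and~\ref{hypercube thm}.

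In the supercritical regime $p=\frac{1+\epsilon}{d}$, I would reveal $G_p=G_{p_1}\cup G_{p_2}$ in two independent rounds with $dp_1=1+\tfrac\epsilon2$ and $p_2$ a small sprinkle. Call $v$ \emph{big} if $|C_{G_{p_1}}(v)|\ge m_0$. Using the small-set isoperimetric inequality to bound how much the exploration from $v$ folds back on itself, the exploration of $C_{G_{p_1}}(v)$, run until it dies or reaches $m_0$ vertices, stochastically dominates a supercritical Galton--Watson process of mean $1+\Omega(\epsilon)$, whence $\Pr[v\ \text{big}]\ge y(\tfrac\epsilon2)-o(1)$; and since, after truncation, ``$v$ is big'' is determined by the percolation inside a polynomially-sized ball around $v$, and almost all pairs of vertices of $G$ have disjoint such balls, a second-moment computation shows \textbf{whp} that the number of big vertices is $(y(\tfrac\epsilon2)-o(1))|G|$. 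The same exploration, analysed via large deviations, shows \textbf{whp} that neither $G_{p_1}$ nor $G_p$ has a component of order in $[\Theta_\epsilon(\log|G|),m_0)$: such a component of order $\ell$ would send out at least $(1-\epsilon/3)d\ell$ edges, of which exactly $\ell-1$ are retained, an event of probability $\exp(-\Theta(\epsilon^2\ell))$, which is summable against the $\le|G|\,m_0$ rooted connected sets of that size once $\ell=\Omega_\epsilon(\log|G|)$. Now sprinkle: each big $G_{p_1}$-component $C$ satisfies $e_G(C,V\setminus C)\ge\beta m_0$ (apply $i(G)\ge\beta$ to $C$, or to its complement if $|C|>|G|/2$), and there are at most $|G|/m_0$ of them; since $p_2\beta m_0\gg\log|G|$, a union bound shows \textbf{whp} that each such component meets an edge of $G_{p_2}$ leaving it, and an iterative merging argument (over several sprinkling rounds, using at each stage that any component of order $\ge m_0$ still has edge boundary $\ge\beta m_0$ which the current round meets) shows \textbf{whp} that all components of order $\ge m_0$ coincide in a single component $D$ of $G_p$, while all but an $o(1)$-fraction of the remaining vertices are absorbed into $D$. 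Together with the count of big vertices, the continuity of $y$ as $p_2\to0$, and the fact --- from a local second-moment argument on the event $|C_{G_p}(v)|\ge K$, whose probability tends to $y(\epsilon)$ as $K\to\infty$ --- that \textbf{whp} at most $(y(\epsilon)+o(1))|G|$ vertices lie in components of order $\ge K$, one concludes that $|D|=(y(\epsilon)\pm o(1))|G|$, that $D$ is the unique component of order $\Omega_\epsilon(\log|G|)$, and that every other component has order $O_\epsilon(\log|G|)$ --- the Erd\H{o}s--R\'enyi component phenomenon.

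I expect the technical heart of the argument to lie entirely in the supercritical regime, in two places. First, since the explored region of an exploration quickly outgrows $d=\Theta(t)$, the comparison with a branching process --- needed both for the lower bound on $\Pr[v\ \text{big}]$ and for the large-deviation ``gap'' estimate --- cannot rely on a crude bound on the size of balls in $G$; it genuinely requires the edge-isoperimetric inequality for small sets in Cartesian products, playing the role that Harper's inequality plays on the hypercube, and establishing that inequality and propagating its error terms through the exploration is the most delicate point. Second, because $G$ lacks the ``complete-graph-like'' structure of $G(n,p)$ --- two connected sets of a given order need not span any edge --- the sprinkling cannot merge big components pairwise at random and must be driven purely by the isoperimetric lower bound $i(G)\ge\beta$, with care taken that the super-component under construction stays below $|G|/2$ (otherwise one passes to its complement) and that the range of component sizes between $|G|^{c\epsilon}$ and the giant, which the direct exploration argument does not reach, is covered by the iterated sprinkling; and the passage from first-moment to high-probability statements for the numbers of big and of non-small vertices must be made via the locality of these events and a second-moment argument exploiting that almost all pairs of vertices of $G$ are far apart. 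Each step is feasible, but together they are exactly what upgrades the qualitative picture of Theorem~\ref{product graphs} to the quantitative Erd\H{o}s--R\'enyi behaviour.
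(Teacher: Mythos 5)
The paper does not actually prove Theorem~\ref{regular product graphs} here; it quotes it from the companion paper \cite{DEKK22}. But the method used there is visible from Section~\ref{sec bounded degree} of this paper, whose Lemmas~\ref{projection lemma}, \ref{t^k} and \ref{density lemma} are stated to be, respectively, taken from, built on, and generalised from \cite[Lemmas 4.1, 4.2, 4.5]{DEKK22}. Comparing against that, your proposal is a genuinely different route, and it contains a real gap at its core.

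Your argument hinges on a postulated small-set edge-isoperimetric inequality: for every \emph{connected} $S$ with $|S|\le m_0=|G|^{c\epsilon}$, one has $e_G(S,S^C)\ge(1-\epsilon/3)d|S|$. This is essentially Harper's theorem on the cube, and while it is plausible (sub-products of size $|G|^{\epsilon/3}$ meet it with near-equality, and it holds for hypercubes and Hamming graphs), there is no general Harper-type theorem for Cartesian products of arbitrary bounded-order regular graphs that you can cite, and you do not prove one. Worse, even granting the inequality, the stochastic domination you then invoke does not follow from it: the inequality controls the \emph{total} edge boundary of the explored blob $S$, whereas coupling the BFS with a supercritical Galton--Watson tree requires a \emph{per-vertex} bound --- that the currently expanding vertex $u$ has $\ge(1-O(\epsilon))d$ neighbours outside $S$. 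Since $|S|$ is allowed to vastly exceed $d$, a vertex $u$ on the boundary could in principle have nearly all of its $d$ neighbours already explored even though the average boundary degree over $S$ is large, and converting the average statement into the pointwise one you need is precisely what makes the AKS/BKL analysis of the cube delicate. You flag this as ``the most delicate point,'' but the delicacy is not a quantitative refinement --- it is the missing step.

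The paper (and \cite{DEKK22}) sidestep this entirely. The engine is Lemma~\ref{projection lemma}: given a small explored set $W_0$ (of size $d^{1/2}\ll d$, so tiny enough that observation~\eqref{Lichev's observation} alone controls back-edges), cover $W_0$ by pairwise disjoint projections of $G$, each still of dimension $t-O(d^{1/2})$, and restart independent explorations \emph{inside} those fresh projections. The induction in Lemma~\ref{t^k} then multiplies the component size by a factor $\approx d^{1/6}$ at each step while never letting any single exploration blob grow beyond $O(d^{1/2})$; by construction the available degree inside each projection is $d-O(Cd^{1/2})$, with no appeal to a Harper-type inequality and no averaging argument to depoint. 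Lemma~\ref{density lemma} then spreads big components near every vertex, and Lemma~\ref{l:sprinkling} merges them --- this last stage is where \emph{global} isoperimetry $i(G)\ge\beta(C)$ is used, much as you propose. So your subcritical argument and your sprinkling step are close to the paper's; the divergence, and the gap, is in the supercritical local growth lemma. If you want to pursue your route, you would need to (i) prove a small-set edge-isoperimetric inequality for these products, and (ii) replace stochastic domination by a per-generation argument that survives the passage from the total to the per-vertex boundary. The projection-lemma route makes both problems disappear, which is why it generalises cleanly in Lemma~\ref{t^k} to the irregular, weak-isoperimetry setting of Theorem~\ref{isoperimetric}.
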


In this paper, we will investigate further the properties of the phase transition in \emph{irregular} high-dimensional product graphs. Firstly, we will give a negative answer to Question \ref{q:maxdeg}, showing that if the maximum degree of the base graphs is allowed to grow (as a function in $t$), then the largest component may have sublinear order for any $p = \Theta\left(\frac{1}{d}\right)$. Let us write $S(r,s)$ for the graph formed by taking the complete graph $K_{r}$ on $r$ vertices and adding $s$ leaves to each vertex of $K_r$.

\begin{thm}\label{unbounded degree} 
Let $r=r(t)$ and $s=s(t)$ be integers, which may tend to infinity as $t$ tends to infinity, such that $r = \omega(s t)$. Let $G^{(j)}=K_2$ for $1\leq j <t$, let $G^{(t)} = S(r,s)$ and let $G=\square_{i=1}^t G^{(i)}$, where we note that $d = (1+o(1))\frac{r}{s}$. Let $p\le\frac{1}{4st}$. Then, \textbf{whp} the largest component of $G_p$ has order at most $\frac{2|G|}{s}$.
\end{thm}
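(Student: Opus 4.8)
The plan is to exploit the product structure. Writing $Q$ for the $(t-1)$-dimensional hypercube $Q^{t-1}$, we have $G = Q\,\square\, S(r,s)$, and I would split $V(G)$ into the \emph{spine} $T = \{x\in V(G) : x_t\in V(K_r)\}$ and, for each of the $rs$ leaves $\ell$ of $S(r,s)$, the \emph{leaf fibre} $L_\ell = \{x\in V(G): x_t = \ell\}$. The structural facts I would use are: $|T| = 2^{t-1}r = \frac{|G|}{1+s}$; each induced subgraph $G[L_\ell]$ is isomorphic to $Q$ (so has $2^{t-1}$ vertices); and the only edges of $G$ leaving $L_\ell$ run to $T$ — in fact, writing $w\in V(K_r)$ for the neighbour of $\ell$ in $S(r,s)$, they form a perfect matching between $L_\ell$ and $\{x\in T: x_t = w\}$, one edge at each vertex of $L_\ell$. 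Since the edge sets of the various $G[L_\ell]$, the matchings $L_\ell$–$T$, and $G[T]$ are pairwise disjoint, they percolate independently.

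The first step is a reduction. Let $A\subseteq \bigcup_\ell L_\ell$ be the set of leaf vertices lying in the same component of $G_p$ as some vertex of $T$. Then every component of $G_p$ meeting $T$ has at most $|T| + |A|$ vertices, while every component avoiding $T$ is contained in a single $L_\ell$ and hence has at most $2^{t-1}$ vertices; since $\frac{2|G|}{s}\ge \frac{2|G|}{1+s} = 2|T| \ge 2^{t-1}$, it suffices to show $|A| \le |T|$ \textbf{whp}. The point is that this sidesteps any analysis of percolation on the spine, which has average degree $\approx r = \omega(st)$ and could plausibly be (nearly) connected in $G_p$: we simply concede the whole of $T$ to one component.

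The second step bounds $\mathbb{E}|A|$. Because $L_\ell$ attaches to the rest of $G$ only through its matching to $T$, a vertex of $L_\ell$ belongs to $A$ if and only if, in $(G_p)[L_\ell]\cong Q_p$, its component contains a vertex whose matching edge to $T$ is present; call such vertices \emph{marked}, noting that each vertex of $L_\ell$ is marked independently with probability $p$ and independently of $(G_p)[L_\ell]$. Hence $|A| = \sum_\ell Z_\ell$, where $Z_\ell$ counts the vertices of $Q_p$ lying in a component with at least one marked vertex. For a fixed $v_0\in V(Q)$ with component $C(v_0)$ in $Q_p$, the probability that $C(v_0)$ contains a marked vertex is $\mathbb{E}\big[1-(1-p)^{|C(v_0)|}\big]\le p\,\mathbb{E}|C(v_0)|$; and since $\Delta(Q)=t-1$ and $(t-1)p \le \frac{1}{4s}\le\frac14$, the standard domination of $|C(v_0)|$ by the total progeny of a $\mathrm{Bin}(t-1,p)$-branching process gives $\mathbb{E}|C(v_0)|\le \frac{1}{1-(t-1)p}\le\frac43$. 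Summing over the $2^{t-1}$ vertices of $Q$ yields $\mathbb{E}[Z_\ell]\le \frac43\cdot 2^{t-1}p \le \frac{2^{t-1}}{3st}$, so $\mathbb{E}|A| = rs\,\mathbb{E}[Z_1] \le \frac{2^{t-1}r}{3t} = \frac{|T|}{3t}$. Markov's inequality then gives $\Pr[|A|\ge |T|]\le \frac{1}{3t}\to 0$, which completes the proof.

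The essential point is the reduction in the first step combined with the fact that each leaf fibre is \emph{subcritical} — this is exactly what the hypothesis $p\le\frac{1}{4st}$ buys, as it forces $(t-1)p\le\frac14$ on the hypercube factor — so that the expected number of vertices of a fixed $L_\ell$ reachable from $T$ is only a polynomially small fraction of $2^{t-1}$; with merely $rs$ leaf fibres, their total contribution stays below $|T|$. The only thing needing care beyond this is verifying that the three edge families above are genuinely disjoint, so that $Z_\ell$ is a function of $(G_p)[L_\ell]$ and an independent marking; once that is in place a first-moment bound suffices and no concentration is required.
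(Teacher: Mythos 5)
Your proof is correct, but it takes a genuinely different route from the paper. The paper's argument is deliberately elementary: since every vertex $x$ with $x_t$ a leaf of $S(r,s)$ has degree exactly $t$ in $G$, the number of $G_p$-edges incident to the leaf set $L$ (of size $2^{t-1}rs$) is stochastically dominated by $\mathrm{Bin}(|L|t,p)$, with mean at most $|L|/(4s)$; Chebyshev then shows that \textbf{whp} at most $|L|/s$ vertices of $L$ are non-isolated, and subtracting the isolated vertices from $|G|$ gives the bound $2\cdot 2^{t-1}r=\tfrac{2|G|}{s+1}$ directly, with no reference to the hypercube factor at all. Your argument instead exploits the decomposition $G=Q^{t-1}\,\square\,S(r,s)$ explicitly: you observe that each leaf fibre $L_\ell\cong Q^{t-1}$ attaches to the rest of $G$ only through a perfect matching to the spine $T$, reduce the problem to bounding the set $A$ of leaf vertices reachable from $T$, and then bound $\mathbb{E}|A|$ by combining the subcritical branching-process estimate $\mathbb{E}|C(v_0)|\le (1-(t-1)p)^{-1}$ in $Q^{t-1}_p$ with an independent Bernoulli$(p)$ marking. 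Both proofs concede the whole of $T$ to a single component and arrive at the same numerical bound $2|T|$; the mechanisms differ in that the paper never tracks connectivity inside the leaf fibres (it only needs to know a leaf vertex is touched by \emph{some} edge), while you track it precisely, which costs you the branching-process lemma but lets you finish with a bare Markov bound since $\mathbb{E}|A|\le|T|/(3t)$ is already a factor $1/t$ below target. Your route is also slightly more robust: it only requires $(t-1)p$ bounded away from $1$ together with $sp=o(1)$, rather than the stronger $stp\le 1/4$ that the isolated-vertices count needs, though for the range $p\le\frac{1}{4st}$ in the theorem the two are essentially equivalent.
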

Note that for the base graphs defined in Theorem \ref{unbounded degree}, we have $i\left(G^{(j)}\right)=1$ for $1\le j \le t-1$, and $i\left(G^{(t)}\right)=\Omega\left(\frac{1}{s}\right)$, and so, as long as $s$ is not too large, this graph will satisfy the requirements of Theorem \ref{product graphs} regarding the isoperimetric constant. However, by choosing $s = \omega(1)$ and ${r = \omega( s^2t)}$, so that the upper bound on the edge probability $p$ in Theorem \ref{unbounded degree} is such that ${\frac{1}{4st} = \omega\left(\frac{s}{r}\right) = \omega\left( \frac{1}{d}\right)}$, we see that even significantly above the point $p_* = \frac{1}{d}$, the largest component in the percolated product graph will typically have size $o(|G|)$. The key observation in the construction is that most vertices in $G$ have degree $t$ while $d\gg t$. Thus, these vertices are likely to be isolated for $p$ which is around $\frac{1}{d}$.

However, we are able to give a positive answer to Question \ref{q:expdecay}, showing that in fact a threshold for the existence of a linear sized component in a percolated product graph exists even when the isoperimetric constants of the base graphs are \emph{super-exponentially} small in $t$. Furthermore, we strengthen Lichev's \cite{L22} result by determining the asymptotic order of the giant, and showing that it is in fact unique.
\begin{thm} \label{isoperimetric}
Let $C>0$ be a constant, and let $\epsilon>0$ be a small enough constant. Let $G^{(1)},\ldots, G^{(t)}$ be graphs such that for all $j\in[t]$, $1\le\Delta\left(G^{(j)}\right)\le C$ and $i\left(G^{(j)}\right)\ge t^{-t^{\frac{1}{4}}}$. Let $G=\square_{j=1}^{t}G^{(j)}$, let $d$ be the average degree of $G$ and let $p=\frac{1+\epsilon}{d}$. Then, \textbf{whp} 
\begin{enumerate}[(a)]
	\item $G_p$ contains a unique giant component of order $(1+o(1))y(\epsilon)|G|$, where $y(\epsilon)$ is defined according to \eqref{survival prob};
	\item  all other components of $G_p$ are of order $o(|G|)$.
	\end{enumerate}
\end{thm}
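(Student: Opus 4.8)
The strategy is to mimic the proof of the Erdős–Rényi component phenomenon, using a combination of (i) a local, branching-process-type argument to show that most vertices lie in "large" components, with the surviving fraction being $y(\epsilon)$, and (ii) a sprinkling argument to glue together these large components into one giant. The key structural input is that, since the $G^{(j)}$ have bounded degree $\le C$, the host graph $G$ is itself of bounded degree (at most $Ct$), and more importantly its average degree $d$ satisfies $t \le d \le Ct$, so $p = \frac{1+\epsilon}{d} = \Theta(1/t)$; this lets us compare the local neighbourhood exploration in $G_p$ to a Galton–Watson process. The isoperimetric hypothesis $i(G^{(j)}) \ge t^{-t^{1/4}}$ will only be needed in the sprinkling/uniqueness phase, where we must rule out two linear-sized components coexisting, and the super-exponentially weak bound will be affordable because sprinkling is extremely efficient at connecting large vertex sets across an expander-like host.

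\textbf{Step 1: the subcritical truncation / local structure.} First I would fix a small $\delta > 0$ and show that whp the number of vertices of $G_p$ lying in components of order at most $k_0$ (for a suitable $k_0 = k_0(\epsilon,\delta)$, constant or slowly growing) is $(1 \pm \delta)(1 - y(\epsilon))|G|$. The upper bound on this count comes from a first-moment computation: the expected number of vertices in small components, using that $G$ has maximum degree $O(t)$ and $p = \Theta(1/t)$, is governed by the extinction probability of a Poisson$(1+\epsilon)$ branching process, which is exactly $1 - y(\epsilon)$. The matching lower bound (i.e. that not too many vertices are in large components) is the more delicate direction; here I would run a BFS/exploration process from a typical vertex $v$ and couple it from below and above with Galton–Watson trees of offspring mean close to $1+\epsilon$, using that for a random starting vertex the first few generations look tree-like (few short cycles through a typical vertex, since $G$ is locally sparse). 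This yields that $\mathbb{P}(|C(v)| \ge k_0) = y(\epsilon) + o_{k_0}(1)$, and a second-moment / concentration argument over the choice of $v$ upgrades this to a whp statement about the count.

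\textbf{Step 2: sprinkling to obtain uniqueness and the exact order.} Write $p = p_1 + p_2 - p_1 p_2$ with $p_1 = \frac{1 + \epsilon/2}{d}$ and $p_2 = \Theta(\epsilon/d)$, so $G_p \supseteq G_{p_1} \cup G_{p_2}$ with the two rounds independent. From Step 1 applied at level $p_1$, whp $G_{p_1}$ has $\Theta(|G|)$ vertices in components of order $\ge k_0$; call these the large components, and note each has order at most $\gamma |G|$ for, say, $\gamma$ slightly less than $y(\epsilon)$ — actually I need to first argue there is at most a bounded number of truly large ($\ge \alpha|G|$) components, which follows because their total mass is at most $|G|$. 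Now I sprinkle with $G_{p_2}$: I must show that any two components of $G_{p_1}$ each of order $\ge \alpha |G|$ are whp joined by a $p_2$-edge. For this I need an edge-boundary estimate: any vertex subset $S$ with $\alpha|G| \le |S| \le |G|/2$ has $e(S, S^c) \ge i(G) \cdot |S|$, and a standard tensorisation bound gives $i(G) \ge \frac{1}{t}\min_j i(G^{(j)}) \ge \frac{1}{t} t^{-t^{1/4}}$; hence the number of "crossing" edges between the vertex sets of two large components is at least roughly $\alpha |G| \cdot t^{-t^{1/4}-1}$, and since $|G| \ge 2^t$ grows doubly-exponentially faster than $t^{t^{1/4}}$, this is still $\ge |G|^{1-o(1)} \to \infty$ fast enough that at least one crossing edge survives the $p_2$-sprinkling whp (here $p_2 = \Theta(1/t)$, so the expected number of surviving crossing edges is $|G|^{1-o(1)}$). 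A union bound over the boundedly-many pairs of large components finishes uniqueness. For the order: merging all large components of $G_{p_1}$ into the giant of $G_p$ shows the giant has order $\ge (1-\delta)(1 - (1-y(\epsilon)))|G| = (1-\delta)y(\epsilon)|G|$ minus the mass in medium components, which one argues is $o(|G|)$; the upper bound $\le (1+o(1))y(\epsilon)|G|$ is just the first-moment bound from Step 1 on the complement.

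\textbf{Main obstacle.} The delicate point is Step 1's lower bound on the number of vertices in large components — i.e. showing the exploration process genuinely behaves like a supercritical branching process and that a $y(\epsilon)$-fraction "survive". Unlike $G(n,p)$, here the host graph is fixed and highly structured, so I cannot just invoke standard random-graph machinery; I must control how often the local exploration self-intersects. The saving grace is that $G$ has maximum degree $O(t)$ and girth considerations only need to hold in expectation over a random root (most vertices have no short cycles through them because of the product structure), combined with $p = \Theta(1/t)$ keeping the branching slow. A secondary technical nuisance is that $G$ may be irregular, so "average degree $d$" must be handled carefully: vertices of very high degree (up to $Ct$) versus the average $\Theta(t)$ could distort the exploration, but the bounded-degree-of-base-graphs hypothesis caps the ratio $\Delta(G)/d$ by a constant, which is exactly enough to keep the offspring distribution within $[1-o(1), C]\cdot(1+\epsilon)$ and push the argument through. (By contrast, Theorem \ref{unbounded degree} shows what goes wrong precisely when this ratio is unbounded.)
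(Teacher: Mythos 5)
There is a genuine gap, and it sits at the heart of what makes the super-exponentially weak isoperimetric hypothesis $i(G^{(j)}) \ge t^{-t^{1/4}}$ nontrivial. Your Step 1 produces vertices lying in components of order at least $k_0$, where you allow $k_0$ to be ``constant or slowly growing''. Your Step 2 then sprinkles to merge them. But the sprinkling argument pays a union bound over all $G_{p_1}$-component-respecting bipartitions of the set of ``big-component'' vertices, and there are up to $2^{\Theta(|G|)/k_0}$ such partitions. Against this you only have roughly $|G|\cdot i(G)\cdot \Delta(G)^{-O(1)} \approx |G|\,t^{-t^{1/4}-O(1)}$ edge-disjoint short crossing paths, each of which survives $p_2$-sprinkling with probability $t^{-O(1)}$. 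The exponent you win is thus $\Theta\bigl(|G|\,t^{-t^{1/4}-O(1)}\bigr)$, and for this to beat $2^{\Theta(|G|)/k_0}$ you are forced to take $k_0 = t^{\,t^{1/4}+\Omega(1)}$, which is super-polynomial in $t$ (equivalently, super-polynomial in $d$). A standard branching-process coupling of the BFS exploration, of the kind you sketch, only delivers ``$v$ lies in a component of size $\geq k_0$ with probability $y(\epsilon)-o(1)$'' for $k_0$ that is at most polynomial in the truncation depth; it does not give you components of size $d^{d^{\Theta(1)}}$ with that probability. Your attempt to sidestep this by first identifying ``boundedly many truly large ($\ge\alpha|G|$) components'' and joining only those is circular: the step that produces linear-sized components in the first place is the merging of the smaller big components, which is precisely what the union bound above obstructs when $k_0$ is small.

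The paper closes this gap with a dedicated lemma (Lemma~\ref{t^k}) which, using the product structure rather than a plain GW-coupling, shows that a vertex of degree close to $d$ lies in a component of order $d^{k/6}$ with probability $y(\epsilon)-o(1)$ for any $k = o(d^{1/3})$. The proof iterates $\omega(1)$ times: after growing a small cluster by BFS, Lemma~\ref{projection lemma} places the cluster vertices into pairwise disjoint high-dimensional projections, a thin sprinkling round moves to fresh neighbours inside those projections, and the induction hypothesis is applied independently inside each projection. This independence across projections is what allows the component size to be multiplied by $d^{1/6}$ at each of $\omega(1)$ rounds, yielding the super-polynomial threshold $k_0 = d^{d^{3/10}} \gg t^{t^{1/4}}$ needed to dominate the partition count. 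In addition, the paper needs a density lemma (Lemma~\ref{density lemma}) guaranteeing that nearly every high-degree vertex is within distance $2$ of a big component, so that the boundary edges supplied by the isoperimetric inequality can actually be extended to short $A$--$B$ paths between big components. Both of these ingredients — the inductive projection-based growth to super-polynomial size and the density lemma — are missing from your proposal, and without them the sprinkling step cannot be made to work at this level of isoperimetric weakness. Your Step 2 boundary calculation and the observation that $|G| \ge 2^t \gg t^{t^{1/4}}$ are correct and are indeed used in the paper, but they are only the final, easy part of the argument.
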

Note that, this is the same asymptotic fraction of the vertices as in Theorems \ref{ER thm} and \ref{hypercube thm}.

Whilst Theorem  \ref{isoperimetric} concerns the size of the largest component in the supercritical regime, which follows the Erd\H{o}s-R\'{e}nyi component phenomenon, we are also able to give an example that demonstrates that, if the base graphs are allowed to be irregular, the percolated product graph can also significantly deviate in behaviour from the Erd\H{o}s-R\'{e}nyi component phenomenon in the subcritical regime.

\begin{thm} \label{many stars}
Let $s$ be a large enough integer. Let $G^{(i)}=S(1,s)$, that is, a star with $s$ leaves, for every $1\le i \le t$, and let $G=\square_{i=1}^tG^{(i)}$, noting that $d = \frac{2st}{s+1}$. Let $p= \frac{c}{t}$ for $c>\frac{1}{3}$. Then, \textbf{whp} the largest component of $G_p$ is of order at least $|G|^{1-s^{-\frac{1}{6}}}$. 
\end{thm}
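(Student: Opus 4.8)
The host graph is $G = \square_{i=1}^t S(1,s)$, where each $S(1,s)$ is a star with one center and $s$ leaves, so $|G| = (s+1)^t$. The plan is to exhibit a large connected subgraph of $G_p$ inside a well-chosen "skeleton" of $G$ on which percolation behaves like a supercritical branching process or a supercritical Erdős–Rényi graph. The key observation is that the center vertices of the stars span a subgraph isomorphic to $Q^t$ (the $t$-dimensional hypercube): a vertex all of whose coordinates are centers has exactly $t$ hypercube-neighbours (flipping one coordinate to another center is impossible in $S(1,s)$, but moving along an edge of $S(1,s)$ from a center reaches a leaf, not a center — so actually I must be more careful). The correct skeleton is as follows: fix a set $A$ of coordinates; on coordinates in $A$ require the entry to be the center, on coordinates outside $A$ allow anything. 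Instead, the cleanest approach is to identify inside $G$ a large collection of disjoint "Erdős–Rényi-like" pieces and a connector structure, exploiting that each coordinate contributes, at a center vertex, $s$ pendant edges to distinct leaves.

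**The main argument.**

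First I would restrict attention to the set $W \subseteq V(G)$ of vertices having at least $(1-s^{-1/3})t$ coordinates equal to a center; a Chernoff bound shows $|W| = (1-o(1))|G|$ once $s$ is large, since a uniform random vertex has center-coordinate count distributed as $\mathrm{Bin}(t, \tfrac{1}{s+1})$, wait — that is far from $t$; rather I want vertices with \emph{few} centers or I want to count leaf-coordinates. Let me instead build the component greedily from a single center-vertex $o = (c,\dots,c)$. From $o$, each of the $t$ coordinate-directions offers $s$ edges of $G$ to the $s$ leaves in that coordinate; with $p = c/t$ and $c > 1/3$, the expected number of retained edges at $o$ is $sp \cdot t = s c$, which is large. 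Crucially, from a leaf-vertex (a vertex with exactly one leaf-coordinate, say in coordinate $i$), the only $G$-edge in coordinate $i$ goes back to a center, but in each of the other $t-1$ coordinates there are again $s$ pendant edges available. So the natural object is a branching-process exploration on the \emph{coordinates}: think of a vertex as being described by which coordinates are "leafed"; exploring means, at each center-coordinate, trying to percolate one of its $s$ pendant edges. This is dominated below by a Galton–Watson process whose offspring is $\mathrm{Bin}\big((t - k)s,\, p\big) \approx \mathrm{Poisson}(sc)$ as long as we have explored $k = o(t)$ coordinates, with mean $\gg 1$, hence survival probability bounded away from $0$; and the exploration does not die out before reaching size $t^{1-o(1)}$ with probability bounded below.

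**From branching to polynomial size, and boosting the probability.**

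The branching heuristic gives a component of size polynomial in $t$ only — I need size $|G|^{1-s^{-1/6}} = (s+1)^{t(1-s^{-1/6})}$, which is \emph{exponential} in $t$. To get this, I would iterate the construction across "fresh" blocks of coordinates. Partition $[t]$ into $m = t^{1-o(1)}$ blocks, each of size $\ell = t^{o(1)}$ (with $\ell$ large). Within a single block, the above exploration, started from a center-vertex, reaches \textbf{whp} a linear-in-$\ell$ fraction of the leaves reachable using only that block's coordinates — more precisely, conditioned on survival it builds a connected set touching a constant fraction of the $(s+1)^{\ell}$ configurations of that block; and by taking $\ell$ large the survival probability within a block is $1 - o(1)$ (running many independent attempts from the many center-vertices available). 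Then I stitch the blocks together: the giant pieces in consecutive blocks share center-coordinates, so a single percolated pendant path links them, and a union bound over the $m$ blocks (each failing with probability $o(1/m)$) yields a connected subgraph of $G_p$ of order $\prod_{\text{blocks}} \big(\text{const} \cdot (s+1)^{\ell}\big) \ge (s+1)^{t(1 - O(\log s / \ell) - o(1))} \ge |G|^{1 - s^{-1/6}}$ for suitable $\ell = \ell(s)$.

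**Main obstacle.**

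The delicate point is quantitative control of the within-block exploration: I need that, conditioned on survival, the branching-process component is not merely polynomially large in the block size but captures a definite \emph{constant fraction} of the block's vertex-configurations, so that the product over $m = t^{1-o(1)}$ blocks loses only a $(1-o(1))$ factor in the exponent. This requires showing the exploration does not self-intersect too often (so that distinct branching-tree vertices correspond to distinct vertices of $G$), which is where the structure of $S(1,s)$ — in particular that leaves have degree $1$, so backtracking is forced and controlled — is essential, and it is the reason the bound degrades like $|G|^{1 - s^{-1/6}}$ rather than $\Omega(|G|)$: the fraction captured per block tends to $1$ only as $s \to \infty$, at a rate that I would track to obtain the exponent $1 - s^{-1/6}$. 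Handling the conditioning on survival cleanly (e.g. via a second-moment or sprinkling argument rather than a literal branching-process coupling) will be the technical heart of the proof.
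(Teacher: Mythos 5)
Your opening observation --- that the product of stars has a set of high-degree vertices around which percolation at $p = c/t$ is locally supercritical, and that a branching-process exploration from such a vertex reaches a component of size polynomial in $t$ with positive probability --- is correct and matches the spirit of Lemma~\ref{two step t^k}. The difficulty, as you recognise, is the bootstrap from $\mathrm{poly}(t)$ to $|G|^{1 - o_s(1)}$, and there the proposed block decomposition has two fatal gaps.

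First, the block size $\ell = t^{o(1)}$ cannot support a supercritical percolation inside a block. The maximum degree within a block of $\ell$ coordinates (attained at the block's all-centers configuration) is $\ell s$, so the within-block branching factor is $p\ell s = c\ell s / t$, which tends to zero unless $\ell = \Omega(t/s)$; for $\ell = t^{o(1)}$ each block is deeply subcritical, and no ``giant within the block'' exists. Second, even after fixing $\ell = \Theta(t/s)$, the assertion that the within-block exploration captures a \emph{constant} fraction of the $(s+1)^{\ell}$ configurations of that block is false. Inside a block, supercriticality is maintained only while at least $\Theta(\ell)$ of the block's coordinates remain centers --- the same constraint as globally, just scaled down --- and the proportion of block configurations with $\Theta(\ell)$ centers is $s^{-\Theta(\ell)}$, not a constant. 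The product over the $m = t/\ell$ blocks therefore falls far short of $|G|^{1 - s^{-1/6}}$; in fact the loss per block is exactly what destroys any exponent of the form $1 - o_s(1)$. Finally, the ``stitching'' step is left entirely informal, and it is not clear what it would mean for giants living in different blocks to be linked by ``a single percolated pendant path''.

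The paper's proof avoids both obstructions by choosing one global layer $M_z$ with $z = t/\sqrt{s}$. Every vertex of $M_z$ has degree $zs + (t-z) \approx \sqrt{s}\,t \gg 1/p$, so a two-step exploration in the bipartite subgraph $\Gamma_z$ between $M_z$ and $M_{z-1}$ is \emph{uniformly} supercritical (Lemma~\ref{two step t^k}); at the same time $|M_z| = \binom{t}{z} s^{t-z} \geq |G|^{1 - s^{-1/3}}$, so the set of big-component vertices in $M_z$ already has the required polynomial size (Lemma~\ref{high degree in big comp finalfinalfinal}). Because $M_z$ is only a polynomial fraction of $V(G)$, the global isoperimetry of $G$ is useless for merging, and the paper instead identifies $G^2[M_z]$ as a Johnson graph $J(t,z)$ with each vertex blown up into a Hamming graph $H(t-z,s)$ and uses the isoperimetric inequalities \eqref{e:Hammingiso} and Theorem~\ref{j iso} in Claim~\ref{star iso finalfinalfinal}, followed by sprinkling (Lemma~\ref{l:sprinkling}), to merge a constant fraction of $M_z$ into one component. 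The layer choice $z = t/\sqrt{s}$ and the Johnson/Hamming isoperimetry of $G^2[M_z]$ are the two ideas that actually carry the proof, and neither appears in your proposal.
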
 
Note that Theorem \ref{many stars} demonstrates the necessity of the assumption that the base graphs are regular in Theorem \ref{regular product graphs}, as well as the near optimality of the bound in Theorem \ref{product graphs} \ref{i:subcritical}, in terms of the size of the largest component in the subcritical regime. Indeed, since $t = \Theta(\log |G|)$ and $ \Delta\left(G^{(i)}\right) = s = \Theta(1)$ for all $i$, here we have that the largest component has order at least $\exp\left(-\Theta( t) \right) |G|$, which matches the bound in Theorem \ref{product graphs} \ref{i:subcritical}, up to the dependence on $p$ and $\max_i \{\Delta\left(G^{(i)}\right) \}$ in the leading constant. Furthermore, note that as $s$ grows, the bound on the largest component grows close to linear in $|G|$.

The structure of the paper is as follows. In Section \ref{sec prelim} we introduce some preliminary tools and lemmas we will use in the paper. In Section \ref{sec unbounded degree} we give a (short) proof for Theorem \ref{unbounded degree}. In Section \ref{sec bounded degree} we provide a general framework for showing the existence of a large component, in particular in high-dimensional product graphs. We then use this framework to prove Theorem \ref{isoperimetric} and, with an additional analysis of the structure of the $t$-fold product of stars, to prove Theorem \ref{many stars}, where the proofs of these two theorems are the most involved part of the paper. Finally, in Section \ref{s:discussion} we make some brief comments on our results and indicate directions for further research.

\section{Preliminaries}\label{sec prelim}
With respect to high-dimensional product graphs, our notation follows that of \cite{DEKK22}. Given a vertex $u = (u_1,u_2, \ldots, u_t)$ in $V(G)$ and $i \in [t]$ we call the vertex $u_i\in V\left(G^{(i)}\right)$ the \textit{$i$-th coordinate} of $u$. We note that, as is standard, we may still enumerate the vertices of a given set $M$, such as $M=\left\{v_1,\ldots, v_m\right\}$ with $v_i\in V(G)$. Whenever confusion may arise, we will clarify whether the subscript stands for enumeration of the vertices of the set, or for their coordinates.

When $G^{(i)}$ is a graph on a single vertex, that is, $G^{(i)}=\left(\{u\},\varnothing\right)$, we call it \textit{trivial} (and \textit{non-trivial}, otherwise). We define the \textit{dimension} of $G=\square_{i=1}^tG^{(i)}$ to be the number of base graphs $G^{(i)}$ of $G$ which are non-trivial.

Given $H\subseteq G=\square_{i=1}^tG^{(i)}$, we call $H$ a \textit{projection of} $G$ if $H$ can be written as $H=\square_{i=1}^tH^{(i)}$ where for every $1\le i\le t$, $H^{(i)}=G^{(i)}$ or $H^{(i)}=\{v_i\}\subseteq V\left(G^{(i)}\right)$; that is, $H$ is a projection of $G$ if it is the Cartesian product graph of base graphs $G^{(i)}$ and their trivial subgraphs. In that case, we further say that $H$ is the projection of $G$ onto the coordinates corresponding to the trivial subgraphs. For example, let $u_i\in V\left(G^{(i)}\right)$ for $1\le i\le k$, and let $H=\{u_1\}\square\cdots\square\{u_k\}\square G^{(k+1)}\square\cdots\square G^{(t)}$. In this case we say that $H$ is a projection of $G$ onto the first $k$ coordinates.

Given a graph $H$ and a vertex $v \in V(H)$, we denote by $C_v(H)$ the component of $v$ in $H$. We denote by $N_H(S)$ the external neighbourhood of a set $S$ in the graph $H$, by $E_H(A,B)$ the set of edges between $A$ and $B$ in $H$, and by $e_H(A,B)\coloneqq |E_H(A,B)|$. When the graph we refer to is obvious, we may omit the subscript. We omit rounding signs for the sake of clarity of presentation.

\subsection{The BFS algorithm} \label{BFS description}
For the proofs of our main results, we will use the Breadth First Search (BFS) algorithm. This algorithm explores the components of a graph $G$ by building a maximal spanning forest. 

The algorithm receives as input a graph $G$ and a linear ordering $\sigma$ on its vertices. The algorithm maintains three sets of vertices: 
\begin{itemize}
\item $S$, the set of vertices whose exploration is complete; 
\item $Q$, the set of vertices currently being explored, kept in a queue; and
\item $T$, the set of vertices that have not been explored yet.
\end{itemize}
The algorithm starts with $S=Q=\emptyset$ and $T=V(G)$, and ends when $Q\cup T=\emptyset$. At each step, if $Q$ is non-empty, the algorithm queries the vertices in $T$, in the order $\sigma$, to ascertain if they are neighbours in $G$ of the first vertex $v$ in $Q$. Each neighbour which is discovered is added to the back of the queue $Q$. Once all neighbours of $v$ have been discovered, we move $v$ from $Q$ to $S$. If $Q=\emptyset$, then we move the next vertex from $T$ (according to $\sigma$) into $Q$. Note that the set of edges queried during the algorithm forms a maximal spanning forest of $G$.

In order to analyse the BFS algorithm on a random subgraph $G_p$ of a graph $G$ with $m$ edges, we will utilise the \emph{principle of deferred decisions}. That is, we will take a sequence $(X_i \colon 1 \leq i \leq m)$ of i.i.d. Bernoulli$(p)$ random variables, which we will think of as representing a positive or negative answer to a query in the algorithm. When the $i$-th edge of $G$ is queried during the BFS algorithm, we will include it in $G_p$ if and only if $X_i=1$. It is clear that the forest obtained in this way has the same distribution as a forest obtained by running the BFS algorithm on $G_p$. 

\subsection{Preliminary Lemmas}\label{sec prelim_lemmas}
We will make use of two standard probabilistic bounds. The first is a typical Chernoff type tail bound on the binomial distribution (see, for example, Appendix A in \cite{AS16}).
\begin{lemma}\label{chernoff}
Let $n\in \mathbb{N}$, let $p\in [0,1]$, and let $X\sim Bin(n,p)$. Then for any positive $m$ with $m\le \frac{np}{2}$, 
\begin{align*}
    \mathbb{P}\left[|X-np|\ge m\right]\le 2\exp\left(-\frac{m^2}{3np}\right).
\end{align*}
\end{lemma}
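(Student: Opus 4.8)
This is a standard Chernoff-type estimate (cf.\ Appendix~A of \cite{AS16}), and the plan is to prove it via the exponential moment method. Writing $X = \sum_{i=1}^{n} Y_i$ with the $Y_i$ independent Bernoulli$(p)$ variables, one has for every $\lambda \in \mathbb{R}$
\[
\mathbb{E}\left[e^{\lambda X}\right] = \left(1 - p + p e^{\lambda}\right)^{n} \le \exp\left(np\left(e^{\lambda} - 1\right)\right),
\]
where the inequality uses $1 + x \le e^{x}$ applied with $x = p(e^{\lambda}-1)$.

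For the upper tail I would apply Markov's inequality to the nonnegative variable $e^{\lambda X}$ with $\lambda > 0$, giving
\[
\mathbb{P}\left[X \ge np + m\right] \le e^{-\lambda(np+m)} \, \mathbb{E}\left[e^{\lambda X}\right] \le \exp\left(np\left(e^{\lambda} - 1\right) - \lambda(np + m)\right),
\]
and then optimise by taking $\lambda = \log\left(1 + \frac{m}{np}\right)$, which yields $\mathbb{P}\left[X \ge np + m\right] \le \exp\left(- np\,\varphi\!\left(\frac{m}{np}\right)\right)$ with $\varphi(x) = (1+x)\log(1+x) - x$. The same computation with $\lambda < 0$, optimised at $\lambda = \log\left(1 - \frac{m}{np}\right)$, controls the lower tail and gives $\mathbb{P}\left[X \le np - m\right] \le \exp\left(- np\,\psi\!\left(\frac{m}{np}\right)\right)$ with $\psi(x) = (1-x)\log(1-x) + x$.

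It then remains to use an elementary estimate: for $0 \le x \le \frac{1}{2}$ one has $\varphi(x) \ge \frac{x^{2}}{3}$ and $\psi(x) \ge \frac{x^{2}}{3}$; indeed $\varphi(x) \ge \frac{x^2}{2 + 2x/3}$ for all $x \ge 0$ and $\psi(x) \ge \frac{x^2}{2}$ for $0 \le x < 1$, and both lower bounds exceed $\frac{x^2}{3}$ once $x \le \frac{1}{2}$. The hypothesis $m \le \frac{np}{2}$ is precisely what guarantees that $x = \frac{m}{np}$ lies in this range, so substituting shows that each of $\mathbb{P}\left[X \ge np + m\right]$ and $\mathbb{P}\left[X \le np - m\right]$ is at most $\exp\left(-\frac{m^{2}}{3np}\right)$; a union bound over these two events then gives $\mathbb{P}\left[|X - np| \ge m\right] \le 2\exp\left(-\frac{m^{2}}{3np}\right)$, as claimed. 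There is no real obstacle here --- the only point requiring care is that the bound $\varphi(x) \ge x^2/3$ (and hence the constant $3$ in the exponent) fails for large $x$, which is exactly why the assumption $m \le \frac{np}{2}$ appears in the statement.
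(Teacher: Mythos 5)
Your proof is correct and is the standard exponential-moment (Chernoff) argument; the paper does not prove this lemma itself but simply cites Appendix A of Alon--Spencer, where essentially this derivation appears. The optimisation of $\lambda$, the bounds $\varphi(x)\ge \frac{x^2}{2+2x/3}$ and $\psi(x)\ge \frac{x^2}{2}$, and the role of the hypothesis $m\le \frac{np}{2}$ in securing the constant $3$ are all handled correctly.
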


The second is the well-known Azuma-Hoeffding concentration inequality (see, for example, Chapter 7 in \cite{AS16})
\begin{lemma}\label{azuma}
    Let $X = (X_1,X_2,\ldots, X_m)$ be a random vector with independent entries and with range $\Lambda = \prod_{i \in [m]} \Lambda_i$, and let $f:\Lambda\to\mathbb{R}$ be such that there exists $C=(C_1,\ldots, C_m) \in \mathbb{R}^m$ such that for every $x,x' \in \Lambda$ which differ only in the $j$-th coordinate,
    \begin{align*}
        |f(x)-f(x')|\le C_j.
    \end{align*}
    Then, for every $m\ge 0$,
    \begin{align*}
        \mathbb{P}\left[\big|f(X)-\mathbb{E}\left[f(X)\right]\big|\ge m\right]\le 2\exp\left(-\frac{m^2}{2\sum_{i=1}^mC_i^2}\right).
    \end{align*}
\end{lemma}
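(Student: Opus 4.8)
The plan is to run the standard Doob-martingale argument, deriving this bounded differences inequality from Azuma's inequality. Write $\Lambda = \prod_{i\in[m]}\Lambda_i$ and, for $0 \le k \le m$, set $Y_k = \mathbb{E}\left[f(X) \mid X_1, \ldots, X_k\right]$, so that $Y_0 = \mathbb{E}[f(X)]$ is deterministic while $Y_m = f(X)$. By the tower property, $(Y_k)_{k=0}^m$ is a martingale with respect to the filtration generated by $X_1, \ldots, X_m$; writing $D_k = Y_k - Y_{k-1}$ for its increments, we have $f(X) - \mathbb{E}[f(X)] = \sum_{k=1}^m D_k$, and each $D_k$ has conditional mean zero given $X_1, \ldots, X_{k-1}$.

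The first key step is to show that, conditionally on $X_1, \ldots, X_{k-1}$, the increment $D_k$ takes values in an interval of length at most $C_k$. Using independence of the coordinates, one writes $Y_k = g_k(X_1, \ldots, X_k)$ where $g_k(x_1, \ldots, x_k) = \mathbb{E}\left[f(x_1, \ldots, x_k, X_{k+1}, \ldots, X_m)\right]$, and likewise $Y_{k-1} = \mathbb{E}_{X_k'}\left[g_k(X_1, \ldots, X_{k-1}, X_k')\right]$. Hence, for fixed values of $X_1, \ldots, X_{k-1}$, the quantity $D_k$ lies between $\inf_{x_k} g_k(X_1, \ldots, X_{k-1}, x_k) - Y_{k-1}$ and $\sup_{x_k} g_k(X_1, \ldots, X_{k-1}, x_k) - Y_{k-1}$, an interval whose length equals $\sup_{x_k, x_k'}\left(g_k(X_1, \ldots, x_k, \ldots) - g_k(X_1, \ldots, x_k', \ldots)\right)$. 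Since the two arguments of $f$ inside these expectations differ only in the $k$-th coordinate, the bounded differences hypothesis bounds this length by $C_k$.

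With this in hand, the proof finishes in the usual way. By Hoeffding's lemma, any mean-zero random variable $Z$ supported in an interval of length $c$ satisfies $\mathbb{E}[e^{\lambda Z}] \le e^{\lambda^2 c^2/8}$ for all $\lambda \in \mathbb{R}$; applying this conditionally to $D_k$ gives $\mathbb{E}\left[e^{\lambda D_k} \mid X_1, \ldots, X_{k-1}\right] \le e^{\lambda^2 C_k^2/8}$. Iterating from $k=m$ down to $k=1$ via the tower property yields $\mathbb{E}\left[e^{\lambda(f(X) - \mathbb{E}[f(X)])}\right] \le \exp\left(\frac{\lambda^2}{8}\sum_{i=1}^m C_i^2\right)$, and Markov's inequality with the optimal choice $\lambda = 4m/\sum_i C_i^2$ gives $\mathbb{P}\left[f(X) - \mathbb{E}[f(X)] \ge m\right] \le \exp\left(-\frac{2m^2}{\sum_{i=1}^m C_i^2}\right)$, which is in fact stronger than the stated bound. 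Applying the same argument to $-f$ controls the lower tail, and a union bound over the two one-sided events produces the factor of $2$.

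As for the main obstacle: there is no genuine difficulty, since this is a classical result; the one point requiring care is the use of independence to identify $Y_k$ with the partial-average function $g_k$, which is what reduces the increment bound to a single-coordinate oscillation of $f$ and is precisely where the hypothesis enters. Alternatively, one may simply invoke Azuma's inequality as a black box, in which case the whole content of the proof is the verification in the second paragraph that the Doob martingale of $f$ has increments bounded by the $C_k$.
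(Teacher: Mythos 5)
Your proof is correct: it is the standard Doob-martingale (McDiarmid) argument, with the one substantive point --- using independence to identify $Y_k$ with the partial average $g_k$ so that the increment is a single-coordinate oscillation of $f$ --- handled properly, and it even yields the sharper exponent $2m^2/\sum_i C_i^2$, which implies the stated bound. The paper does not prove this lemma at all; it cites it as a standard concentration inequality, and your argument is essentially the one found in the cited reference.
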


We will use the following result of Chung and Tetali \cite[Theorem 2]{CT98}, which bounds the isoperimetric constant of product graphs.

\begin{thm}[\cite{CT98}]\label{productiso} 
Let $G^{(1)},\ldots, G^{(t)}$ be non-trivial graphs and let $G = \square_{i=1}^{t}G^{(i)}$. Then
\[
\min_j \left\{i\left( G^{(j)}\right) \right\} \geq i(G) \geq \frac{1}{2}\min_j \left\{i\left( G^{(j)}\right) \right\} .
\]
\end{thm}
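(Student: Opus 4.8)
We must show $\min_j i(G^{(j)}) \ge i(G) \ge \tfrac12 \min_j i(G^{(j)})$; write $\alpha \coloneqq \min_j i(G^{(j)})$. For the upper bound I would pick a coordinate $j$ with $i(G^{(j)}) = \alpha$ and a set $T \subseteq V(G^{(j)})$ with $|T| \le |V(G^{(j)})|/2$ and $e_{G^{(j)}}(T,T^C) = \alpha|T|$, and take the ``cylinder'' $S \coloneqq \{v \in V(G) : v_j \in T\}$. An edge of a Cartesian product changes exactly one coordinate, so the edges leaving $S$ are precisely those in direction $j$ whose pair of $j$-th coordinates lies in $E_{G^{(j)}}(T,T^C)$, one such edge for each choice of the remaining coordinates; hence $e_G(S,S^C) = e_{G^{(j)}}(T,T^C)\prod_{i\ne j}|V(G^{(i)})|$ and $|S| = |T|\prod_{i\ne j}|V(G^{(i)})| \le |V(G)|/2$, so $i(G) \le e_G(S,S^C)/|S| = i(G^{(j)}) = \alpha$.

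For the lower bound I would induct on $t$, the case $t=1$ being trivial. For $t \ge 2$ write $G = G^{(1)}\square G'$ with $G' = \square_{j=2}^t G^{(j)}$, put $n_1 = |V(G^{(1)})|$ and $N = |V(G')|$, and fix $S \subseteq V(G)$ with $m \coloneqq |S| \le |V(G)|/2 = n_1N/2$; it suffices to bound $e_G(S,S^C)$ below by $\tfrac\alpha2 m$. For $x \in V(G')$ set $S^x \coloneqq \{b : (b,x)\in S\}$ and for $b \in V(G^{(1)})$ set $S_b \coloneqq \{x : (b,x)\in S\}$, so $\sum_x|S^x| = \sum_b|S_b| = m$. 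Every edge of $G$ lies inside a single $G^{(1)}$-fibre or inside a single $G'$-fibre, so
\[
e_G(S,S^C) = \sum_{x \in V(G')} e_{G^{(1)}}(S^x,\overline{S^x}) + \sum_{b \in V(G^{(1)})} e_{G'}(S_b,\overline{S_b}) \;\ge\; \alpha P + \tfrac\alpha2 Q ,
\]
where $P \coloneqq \sum_x \min(|S^x|, n_1-|S^x|)$ and $Q \coloneqq \sum_b \min(|S_b|, N-|S_b|)$: the first sum uses $i(G^{(1)}) \ge \alpha$, and the second uses the induction hypothesis $i(G') \ge \tfrac12\min_{j\ge 2} i(G^{(j)}) \ge \tfrac\alpha2$ together with the fact that $e_{G'}(S_b,\overline{S_b}) \ge i(G')\min(|S_b|,N-|S_b|)$ whether or not $|S_b| \le N/2$. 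So it is enough to prove the combinatorial inequality $2P + Q \ge m$.

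Using $\min(a,M-a) = a - 2(a-M/2)^{+}$ together with $\sum_x|S^x| = \sum_b|S_b| = m$, the inequality $2P+Q\ge m$ is equivalent to $2P^{\ast} + Q^{\ast} \le m$, where $P^{\ast} = \sum_x(|S^x|-n_1/2)^{+}$ and $Q^{\ast} = \sum_b(|S_b|-N/2)^{+}$. I would first \emph{compress} $S$ in the $G^{(1)}$-direction, replacing each fibre $S^x$ by an initial segment of $V(G^{(1)})$ of the same size under a fixed ordering: this leaves $P^{\ast}$ unchanged, and since the resulting vector $(|S_b|)_b$ majorizes the original one while $t\mapsto(t-N/2)^{+}$ is convex, it can only increase $Q^{\ast}$, so it is enough to treat compressed sets. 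For a compressed $S$ the sequence $q_b \coloneqq |S_b|$ is non-increasing with $q_b \le N$ and $\sum_b q_b = m \le n_1N/2$, and (taking $n_1$ even for cleanliness) $P^{\ast} = \sum_{b>n_1/2}q_b$, $Q^{\ast} = \sum_b(q_b-N/2)^{+}$. A short rearrangement reduces $2P^{\ast}+Q^{\ast}\le m$ to
\[
\sum_{b>n_1/2}\bigl(q_b + (q_b-N/2)^{+}\bigr) \;\le\; \sum_{b\le n_1/2}\min(q_b,N/2) .
\]
If $k \coloneqq |\{b : q_b > N/2\}| \le n_1/2$ this is immediate from monotonicity of $(q_b)$. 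If $k > n_1/2$ I would write $q_b = N/2 + \delta_b$ for $b \le k$ with $\delta_1 \ge \dots \ge \delta_k > 0$; then the right-hand side equals $n_1N/4$, and a direct computation rewrites the left-hand side as $n_1N/4 + 2D_2 - E$, where $D_1 \coloneqq \sum_{b\le n_1/2}\delta_b$, $D_2 \coloneqq \sum_{n_1/2<b\le k}\delta_b$ and $E \coloneqq (n_1-k)N/2 - \sum_{b>k}q_b \ge 0$. The global constraint $m \le n_1N/2$ unpacks to $D_1 + D_2 \le E$, whereas monotonicity of $(\delta_b)$ and $k-n_1/2 \le n_1/2$ give $D_2 \le D_1$; hence $2D_2 \le D_1 + D_2 \le E$, and the inequality follows.

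The crux, and the step I expect to be most delicate, is exactly this one-dimensional inequality on $(q_b)$: the difficulty is that the fibres $S_b$ (and $S^x$) can occupy more than half of their fibre, so one cannot naively apply the base-graph isoperimetry ``on the $|S_b|$-side'' of every fibre. This is precisely why the induction must weight one fresh coordinate fully and the remaining ones by $\tfrac12$ (producing the factor $\tfrac12$ in the theorem), and why the size budget $|S| \le |V(G)|/2$ must be accounted for carefully on the over-half fibres; the compression/majorization step is what lets one pass to a single monotone sequence, after which the split on $k$ closes the argument. (For $n_1$ odd one uses the variant $(s-n_1/2)^{+} = \tfrac12\bigl[(s-\lfloor n_1/2\rfloor)^{+} + (s-\lceil n_1/2\rceil)^{+}\bigr]$, valid for integers $s$, in the expression for $P^{\ast}$, the rest of the argument being unchanged.)
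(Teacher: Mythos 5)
The paper imports Theorem \ref{productiso} wholesale from Chung and Tetali \cite{CT98} and does not reproduce a proof, so there is no internal argument to compare yours against; your write-up in fact supplies the details that the paper only cites. Having checked it, your proof is correct. The cylinder construction for the upper bound is the standard computation. For the lower bound, the fibre decomposition $e_G(S,S^C)=\sum_x e_{G^{(1)}}(S^x,\overline{S^x})+\sum_b e_{G'}(S_b,\overline{S_b})$, the two-sided bound $e_H(T,T^C)\ge i(H)\min(|T|,|V(H)|-|T|)$ used on both sums, and the reduction to $2P+Q\ge m$ and then, via $\min(a,M-a)=a-2(a-M/2)^+$, to $2P^*+Q^*\le m$, are all right. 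The compression step is sound: writing $c_x\coloneqq|S^x|$ for the column sums of the incidence $0$--$1$ matrix, any $k$ rows contribute at most $\sum_x\min(c_x,k)$ ones, which is precisely $\sum_{b\le k}\hat q_b$ for the compressed non-increasing row sums $\hat q_b=|\{x:c_x\ge b\}|$; hence $(\hat q_b)$ majorizes $(q_b)$, and Karamata's inequality with the convex function $t\mapsto(t-N/2)^+$ gives $\hat Q^*\ge Q^*$ while $P^*$ is untouched. Your final case analysis on $k=|\{b:q_b>N/2\}|$ also checks out: for $k\le n_1/2$ the term-by-term pairing $q_{n_1/2+j}\le\min(q_j,N/2)$ is immediate from monotonicity, and for $k>n_1/2$ the size constraint $m\le n_1N/2$ unpacks to $D_1+D_2\le E$ while monotonicity of $(\delta_b)$ gives $D_2\le D_1$, so $2D_2\le E$ as needed. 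The only place you are brisk is the odd-$n_1$ case, where you quote the (correct) averaging identity $(s-n_1/2)^+=\tfrac12\bigl[(s-\lfloor n_1/2\rfloor)^++(s-\lceil n_1/2\rceil)^+\bigr]$ but do not carry the case split through; I believe it goes through with the threshold shifted to $\lfloor n_1/2\rfloor$ and a slightly asymmetric pairing, but since the split of the index set is no longer symmetric it is worth writing out in full. That is a presentation point rather than a genuine gap.
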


The following projection lemma, which is a key tool in \cite[Lemma 4.1]{DEKK22}, allows us to cover a small set of points in a product graph with a disjoint set of high-dimensional projections. This allows us to explore the neighbourhoods of these points in the percolated subgraph in an independent fashion.
\begin{lemma}[Projection Lemma]\label{projection lemma}
Let $G=\square_{i=1}^{t}G^{(i)}$ be a product graph with dimension $t$. Let $M\subseteq V(G)$ be such that $|M|=m\le t$. Then, there exist pairwise disjoint projections $H_1, \ldots, H_m$ of $G$, each having dimension at least $t-m+1$, such that every $v\in M$ is in exactly one of these projections.
\end{lemma}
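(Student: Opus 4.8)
The plan is to reduce the statement to a purely combinatorial fact about the finite set $M$: there is a set of coordinates $I\subseteq[t]$ with $|I|\le m-1$ such that the projection map $\pi_I\colon V(G)\to\prod_{i\in I}V\!\left(G^{(i)}\right)$, $v\mapsto(v_i)_{i\in I}$, is injective on $M$. Call such an $I$ a \emph{separating set} for $M$. Given such an $I$, for each $v\in M$ I would let $H_v$ be the projection of $G$ onto the coordinates in $I$ in which coordinate $i$ is fixed to $v_i$; that is, $H_v=\square_{i=1}^{t}H^{(i)}$ with $H^{(i)}=\{v_i\}$ for $i\in I$ and $H^{(i)}=G^{(i)}$ otherwise. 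Since $G$ has dimension $t$, every $G^{(i)}$ is non-trivial, so each $H_v$ has dimension exactly $t-|I|\ge t-m+1$. Clearly $v\in H_v$; and if $w\in M\setminus\{v\}$ then injectivity of $\pi_I$ on $M$ gives $\pi_I(v)\ne\pi_I(w)$, so $v\notin H_w$ and $H_v\cap H_w=\varnothing$. Hence the $H_v$ are pairwise disjoint and each $v\in M$ lies in exactly one of them.

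It then remains to produce a separating set of size at most $m-1$, which I would do greedily. Start with $I=\varnothing$. As long as $\pi_I$ is not injective on $M$, choose distinct $v,w\in M$ with $\pi_I(v)=\pi_I(w)$; since $v\ne w$, some coordinate $j\notin I$ satisfies $v_j\ne v_j$—more precisely $v_j\ne w_j$—and we add $j$ to $I$. The key point is that this step strictly increases the number of classes of the equivalence relation $u\sim u'\iff\pi_I(u)=\pi_I(u')$ on $M$: the class containing $v$ and $w$ gets split (as $v$ and $w$ are now separated by coordinate $j$), while no two classes are merged. Since this relation has a single class when $I=\varnothing$ and at most $|M|=m$ classes in total, the process stops after at most $m-1$ steps, producing a separating set $I$ with $|I|\le m-1$. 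A fresh coordinate is always available when one is needed, since at that moment $|I|\le m-2\le t-2<t$, using the hypothesis $m\le t$.

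The argument is elementary, so I do not anticipate a genuine obstacle; the only points needing a little care are (i) verifying that each projection $H_v$ has dimension exactly $t-|I|$, which is precisely where non-triviality of all the base graphs (equivalently, that $G$ has dimension $t$) is used, and (ii) noting that the greedy process never runs out of unused coordinates, which is exactly where $m\le t$ enters. One could equivalently phrase the construction of $I$ as an induction on $m$, splitting off one point of $M$ at a time, but the equivalence-class-counting version above makes the size bound $|I|\le m-1$ most transparent.
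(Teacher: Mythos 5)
Your proof is correct. Note that this paper does not actually prove the lemma---it is imported from the companion paper \cite[Lemma 4.1]{DEKK22}, where the argument is an induction on $m$ that repeatedly splits $M$ along a coordinate on which two of its vertices differ; your greedy construction of a separating set $I$ with $|I|\le m-1$, justified by counting equivalence classes of $\pi_I$ on $M$, is just a cleaner reorganisation of the same idea, and all the delicate points (the chosen coordinate $j$ automatically lies outside $I$ since $\pi_I(v)=\pi_I(w)$, the dimension count $t-|I|\ge t-m+1$ via non-triviality of the base graphs, and the ``exactly one'' conclusion via injectivity of $\pi_I$ on $M$) are handled correctly.
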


We will make repeated use of the following simple, but powerful, observation of Lichev \cite{L22} about the degree distribution in a product graph. If $G$ is a product graph in which all base graphs have maximum degree bounded by $C$, then for any $v,w \in V(G)$ we have
\begin{align}\label{Lichev's observation}
    |d_G(v)-d_G(w)|\le (C-1)\cdot dist_G(v,w).
\end{align}
In particular, if $v$ is a vertex with degree significantly above or below $d$, then, despite the fact that $G$ may be quite irregular, vertices close to $v$ will still have degree significantly above or below $d$, and the percolated subgraph close to $v$ will look either super- or sub-critical, respectively.

Finally, for the proof of Theorem \ref{many stars}, we will utilise the structure, and in particular the isoperimetric inequalities, of both the \textit{Hamming graph} and the \textit{Johnson graph}.

Given positive integers $t$ and $s$, the  Hamming graph $H(t,s)$ is the graph with vertex set $[s]^t$ in which two vertices are adjacent if they differ in a single coordinate. Alternatively, $H(t,s)$ can be defined as the $t$-fold Cartesian product of the complete graphs $K_s$. In particular, it follows from Theorem \ref{productiso} that for any non-negative integers $z\le t$,
\begin{equation}\label{e:Hammingiso}
i(H(t-z,s)) \geq \frac{1}{2} i(K_s) = \frac{s}{2}.
\end{equation}

Given positive integers $t$ and $z$, the Johnson graph $J(t,z)$ is the graph with vertex set $\binom{[t]}{z}$ in which two $z$-sets $I$ and $K$ are adjacent if $|I \triangle K| = 2$. We note that $J(t,z)$ is the induced subgraph of the square of $Q^t$ on the vertex set of the $z$-th \emph{layer}, $\binom{[t]}{z}$. We will make use of the next vertex-isoperimetric inequality for Johnson graphs, which follows from the work of Christofides, Ellis and Keevash \cite{CEK13}.
\begin{thm}[\cite{CEK13}] \label{j iso} 
Let $t$ and $z$ be positive integers with $z< t$ and let $\alpha\in (0,1)$. Then there exists a constant $c>0$ such that for any subset $A \subseteq V(J(t,z))$ of size $|A| = \alpha \binom{t}{z}$
\[
\left|N_{J(t,z)}(A) \right|\geq c \sqrt{\frac{t}{z(t-z)}} (1-\alpha) |A|.
\]
\end{thm}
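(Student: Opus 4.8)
The statement is the ``slice'' analogue of Harper's vertex-isoperimetric theorem on the hypercube and is due to Christofides, Ellis and Keevash \cite{CEK13}; I describe the shape of the argument. For orientation, recall that $J(t,z)$ is vertex-transitive and $d$-regular with $d=z(t-z)$, and (using $J(t,z)\cong J(t,t-z)$ to assume $z\le t/2$) has second adjacency eigenvalue $\lambda_2=(z-1)(t-z-1)-1=d-t$; thus $\sqrt{t/(z(t-z))}=\sqrt{(d-\lambda_2)/d}$ is the square root of its normalised spectral gap. Extracting this square root is the whole point. Applying the variational characterisation of $\lambda_2$ to the vector $\mathbf 1_A-\alpha\mathbf 1$ (which is orthogonal to $\mathbf 1$) gives the edge-isoperimetric inequality $e(A,A^C)\ge(d-\lambda_2)\tfrac{|A||A^C|}{\binom tz}=t(1-\alpha)|A|$, and combined with the trivial bound $|N(A)|\ge e(A,A^C)/d$ this yields only $|N(A)|\ge\tfrac{t}{z(t-z)}(1-\alpha)|A|$, which is smaller than the target by a factor of order $\sqrt{z(t-z)/t}$. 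Since $\lambda_2/d\to 1$, the graph $J(t,z)$ is not a spectral expander, so purely spectral or random-walk arguments are genuinely lossy here, and the missing square-root factor has to be won from the combinatorial structure, exactly as in the proof of Harper's theorem.

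The plan is to reduce, by a compression/shifting argument in the spirit of Harper's theorem (or a related stability argument), the task of lower bounding $|N(A)|$ for an arbitrary $A$ to the case where $A$ is (close to) a \emph{Hamming ball}, a set of the form $B=\{I\in\binom{[t]}{z}:|I\cap[m]|\ge\tau\}$, and then to estimate the boundary of Hamming balls directly. For such a $B$ one checks at once that a set $K\notin B$ lies in $N(B)$ if and only if $|K\cap[m]|=\tau-1$ — adjacency in $J(t,z)$ changes $|I\cap[m]|$ by at most one — so $N(B)$ is exactly the ``sphere'' $\{I:|I\cap[m]|=\tau-1\}$ and
\[
\frac{|N(B)|}{|B|}=\frac{\mathbb P[X=\tau-1]}{\mathbb P[X\ge\tau]},
\]
where $X=|I\cap[m]|$ for $I$ chosen uniformly at random from $\binom{[t]}{z}$. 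Here $X$ is hypergeometric with variance of order $zm(t-m)(t-z)/t^3$, which (for fixed $z$) is maximised at $m\asymp t/2$, where it is of order $z(t-z)/t$; a local central limit theorem for $X$ then shows that the point mass at the threshold is of order $\big(z(t-z)/t\big)^{-1/2}$ and, after the routine comparison of $\mathbb P[X=\tau-1]$ with $\mathbb P[X\ge\tau]$ via Mills-type tail estimates, that the ratio above is at least of order $(1-\alpha)\sqrt{t/(z(t-z))}$ uniformly over admissible $m,\tau$ and, crucially, over $z$. This uniformity in $z$ — needed for a single constant $c$ valid for all $z<t$ rather than only the balanced slice $z=t/2$ — requires tracking the hypergeometric estimates across the whole range, including $z=O(1)$ (where the crude spectral bound above already suffices) and $z=(1-o(1))t/2$; the shadow estimates entering the compression step are of Kruskal--Katona type.

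The main obstacle is the reduction to Hamming balls. As already in Harper's original proof, one has to verify that the relevant compressions do not increase the vertex boundary and that they steer an arbitrary family towards the correct extremal shape; for $J(t,z)$ this is subtler than for $Q^t$, because adjacency in $J(t,z)$ is the distance-$2$ relation of $Q^t$, so a naive $(i,j)$-compression can move $N(A)$ the wrong way, and — unlike for the cube — the colexicographic initial segment $\binom{[x]}{z}$ is \emph{not} the minimiser here (its vertex boundary can be of order $|A|$, far larger than the extremal $\Theta\!\big(\sqrt{t/(z(t-z))}\big)|A|$), so one cannot simply ``compress to a canonical form and compute''. Making this reduction work — which is why \cite{CEK13} obtain only an \emph{approximate} inequality, recovering the right order of magnitude rather than the exact extremal value — is the heart of the matter; the remaining hypergeometric computation is essentially routine. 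Should the compression bookkeeping prove intractable, the fallback I would keep in reserve is to deduce the inequality from Harper's theorem on $Q^t$ by comparing the uniform measure on the slice $\binom{[t]}{z}$ with a suitable product measure on $\{0,1\}^t$ concentrated on the levels near $z$ — but quantifying such a comparison sharply is itself substantial, so the compression route is the one I would commit to.
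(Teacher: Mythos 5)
The paper does not actually prove this statement: Theorem \ref{j iso} is imported from Christofides, Ellis and Keevash \cite{CEK13} and used as a black box (in the proof of Claim \ref{star iso finalfinalfinal}), so there is no in-paper argument to measure your write-up against. Judged on its own terms, what you have written is a proof plan rather than a proof. Your preliminary observations are sound: the spectral/edge-isoperimetric route does lose a factor of order $\sqrt{z(t-z)/t}$ exactly as you say, and your computation of the vertex boundary of a Hamming ball $B=\{I\in\binom{[t]}{z}:|I\cap[m]|\ge\tau\}$ (modulo degenerate cases where not every set on the sphere $\{|I\cap[m]|=\tau-1\}$ actually has a neighbour in $B$, which needs a one-line check) together with the hypergeometric local limit estimate correctly yields the rate $\sqrt{t/(z(t-z))}$ \emph{for such sets}.

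The gap is that the entire content of the theorem lies in the reduction from an arbitrary $A$ to (something close to) a Hamming ball, and you explicitly do not carry this out. You correctly flag the obstacles --- that naive $(i,j)$-compressions need not decrease the vertex boundary in $J(t,z)$ because its adjacency is the distance-$2$ relation of $Q^t$, and that colexicographic initial segments are not extremal here --- and then declare this step ``the heart of the matter'' and stop. No compression (or other reduction) operator is defined, no monotonicity of $|N_{J(t,z)}(\cdot)|$ under it is verified, and no stability statement is proved that would let you pass from a reduced set back to a genuine Hamming ball with only a constant-factor loss; your fallback via comparison with Harper's theorem on $Q^t$ is likewise only named. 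As it stands the argument establishes the claimed inequality only for Hamming balls, so the central step is missing. Since the paper itself simply cites \cite{CEK13}, the appropriate fix in this context is either to do the same or to actually supply the reduction you have outlined.
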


\section{Unbounded degree}\label{sec unbounded degree}
Given $r=r(t)$ and $s=s(t)$, recall that $S(r,s)$ is the graph formed by taking a complete graph $K_r$ on $r$ vertices and adding $s$ leaves to each vertex of $K_r$. Note that $i(S(r,s)) = \Omega\left(\frac{1}{s}\right)$. Indeed, let $A\subseteq V(S(r,s))$ with $|A|\le \frac{r(s+1)}{2}$. Let $A_1=A\cap V(K_r)$ and let $A_2=A\setminus A_1$. First, suppose that $\frac{3r}{4}\le |A_1|\le r$. Then at least, say, $\frac{|A_1|}{10}$ of the vertices of $A_1$ have at least $\frac{s}{10}$ leaves outside of $A$, and thus $|N(A)|=\Theta(rs)=\Omega(|A|)$. Now, if $0<|A_1|<\frac{3r}{4}$, then $|N(A)|\ge r-|A_1|=\Omega\left(\frac{|A|}{s}\right)$. Finally if $A_1=\varnothing$, then clearly $|N(A)|\ge \frac{|A|}{s}$. In either case, we have that $|N(A)|=\Omega\left(\frac{|A|}{s}\right)$, and thus $i(S(r,s))=\Omega\left(\frac{1}{s}\right)$.

Let $G^{(j)}=K_2$ for $1 \leq j < t$, let $G^{(t)} = S(r,s)$ and let $G=\square_{i=1}^tG^{(i)}$. Observe that
\[
|V(G)| = 2^{t-1}|S(r,s)| = 2^{t-1}(r+rs) = 2^{t-1}r(s+1),
\]
and, as long as $r =\omega( st)$,
\begin{align*}
    |E(G)| &= \frac{1}{2}2^{t-1}r\left(s+r-1+(t-1)\right)+\frac{1}{2}2^{t-1}rs\left(1+(t-1)\right)
\\&=\frac{1}{2} 2^{t-1} \bigg(r (r+s+t-2) + rs \left(1+(t-1)\right) \bigg) = (1+o(1)) 2^{t-2}r^2,    
\end{align*}
and so, if $s = \omega(1)$, we have that $d = (1+o(1)) \frac{r^2}{r(s+1)} = (1+o(1))\frac{r}{s}$.

In order to prove Theorem \ref{unbounded degree}, we will show that typically, when $p \leq \frac{1}{4st}$, almost all the vertices of $G_p$ are isolated vertices.
\begin{lemma} \label{isolated vertices}
If $p \leq \frac{1}{4st}$, then \textbf{whp} at least $2^{t-1}rs \left(1 - \frac{1}{s} \right)$ vertices of $G_p$ are isolated vertices.
\end{lemma}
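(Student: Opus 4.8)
The plan is to show that almost all of the $2^{t-1}rs$ ``leaf-type'' vertices of $G$ --- those whose last coordinate is a leaf of $S(r,s)$ --- are isolated in $G_p$, and to obtain the stated bound with room to spare via concentration.

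First I would observe that if $v\in V(G)$ has last coordinate equal to a leaf of $S(r,s)$, then $d_G(v)=t$: it has exactly one neighbour in each of the $t-1$ copies of $K_2$, and exactly one neighbour (the centre of its star) in the $S(r,s)$-direction. Consequently, for such a vertex, $\mathbb{P}[v \text{ is not isolated in } G_p] = 1-(1-p)^t \le pt \le \frac{1}{4s}$, using $p\le\frac{1}{4st}$ and Bernoulli's inequality. Writing $Y$ for the number of leaf-type vertices that are not isolated in $G_p$, linearity of expectation gives $\mathbb{E}[Y]\le 2^{t-1}rs\cdot\frac{1}{4s}=2^{t-3}r$.

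Next I would apply the Azuma--Hoeffding inequality (Lemma \ref{azuma}) to $Y$, viewed as a function of the independent edge-indicator variables $(X_e : e\in E(G))$. Changing the status of a single edge $e$ can change the isolation status of at most its two endpoints, so $Y$ changes by at most $2$; thus we may take $C_e=2$ for every edge, and $2\sum_e C_e^2 = 8|E(G)| = (1+o(1))2^{t+1}r^2$ by the edge count already computed in this section. Taking the deviation $\lambda = 2^{t-1}r-\mathbb{E}[Y]\ge 2^{t-1}r-2^{t-3}r = 3\cdot 2^{t-3}r$, Lemma \ref{azuma} yields
\[
\mathbb{P}\big[Y\ge 2^{t-1}r\big]\le\mathbb{P}\big[Y-\mathbb{E}[Y]\ge\lambda\big]\le 2\exp\!\left(-\frac{\lambda^2}{2\sum_e C_e^2}\right)\le 2\exp\!\left(-(1+o(1))\tfrac{9}{128}\,2^t\right)\xrightarrow{\ t\to\infty\ }0.
\]
Hence \textbf{whp} at most $2^{t-1}r$ of the leaf-type vertices fail to be isolated, so at least $2^{t-1}rs-2^{t-1}r = 2^{t-1}rs\left(1-\frac{1}{s}\right)$ vertices of $G_p$ are isolated, as claimed.

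I do not anticipate a serious obstacle; the only real point is that a first-moment/Markov argument alone would give the bound only with constant probability, so one genuinely needs the concentration step. The Azuma bound works comfortably because $|E(G)|=\Theta(2^{t}r^2)$ is far smaller than the square $\Theta(2^{2t}r^2s^2)$ of the number of leaf-type vertices, while the target gap is a constant factor of $2^{t-1}r$, which makes $\lambda^2/|E(G)|$ grow like $2^t$. One could alternatively restrict attention to the (fewer) edges incident to leaf-type vertices, or run a short second-moment computation exploiting that non-adjacent leaf-type vertices have independent isolation events, but the crude Lipschitz bound over all edges already suffices.
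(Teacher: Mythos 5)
Your proof is correct and takes essentially the same approach as the paper: isolate the $2^{t-1}rs$ leaf-type vertices (each of degree exactly $t$ in $G$), note that with $p\le\frac{1}{4st}$ only an $O(1/s)$-fraction of them can be non-isolated in expectation, and finish by concentration. The only difference is cosmetic — the paper counts edges of $G_p$ incident to this set and applies a Chebyshev/Markov bound via stochastic domination by $\mathrm{Bin}(|L|t,p)$, whereas you count non-isolated vertices directly and apply Azuma--Hoeffding over the full edge exposure — but the core idea and the resulting estimate are the same.
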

\begin{proof}
Let $\{z_i \colon i \in [r]\} \subseteq V(S(r,s))$ be the vertex set of the complete graph $K_r$ and let $\{\ell_{i,j} \colon j \in [s]\}$ be the set of leaves adjacent to the vertex $z_i$ for each $i\in [r]$. Let
\[
L = \{ x \in V(G) \colon x_t = \ell_{i,j} \text{ for some } i\in[r], j\in[s]\}.
\]
Note that $d_G(x) = t$ for every $v \in L$ and that $|L| = 2^{t-1}rs$.

Let $X_L$ be the number of edges in $G_p$ which are incident with a vertex in $L$. Clearly $X_L$ is stochastically dominated by $Bin(|L|t,p)$ and hence, by Chebyshev's inequality, \textbf{whp}
\[
X_L\le 2|L|tp\le \frac{|L|}{2s}.
\]
Therefore, \textbf{whp}, the number of isolated vertices in $L$ is at least
\[
|L|\left(1-\frac{1}{s}\right) \geq 2^{t-1}rs\left(1-\frac{1}{s}\right).
\]
\end{proof}

The proof of Theorem \ref{unbounded degree} will now be a fairly straightforward corollary of Lemma \ref{isolated vertices}.
\begin{proof}[Proof of Theorem \ref{unbounded degree}]
Since $|G| = 2^{t-1}r(s+1)$ and by Lemma \ref{isolated vertices} \textbf{whp} there are at least $2^{t-1}rs\left(1-\frac{1}{2s}\right)$ isolated vertices in $G_p$, it follows that \textbf{whp} the number of vertices in any non-trivial component is at most
\begin{align*}
2^{t-1}r(s+1) - 2^{t-1}rs \left(1-\frac{1}{s}\right) &= |G| \left( 1 -\left(1 - \frac{1}{s+1} \right)\left(1-\frac{1}{s}\right)\right) 
=\frac{2|G|}{s+1} 
\leq \frac{2|G|}{s}.
\end{align*}
\end{proof}

\section{Irregular graphs of bounded degree} \label{sec bounded degree}
In many standard proofs of the existence of a phase transition in percolation models, for example in \cite{AKS81, BKL92,DEKK22,ER59,L22} and many more, in order to show the existence of a linear sized component in the supercritical regime, one first shows that \textbf{whp} a positive proportion of the vertices in the host graph $G$ are contained in \emph{big} components, that is, components of size at least $k$ for some appropriately chosen threshold $k$. One then completes the proof by using a sprinkling argument to show that \textbf{whp} almost all of these big components merge into a single, giant component.

More concretely, the first part of the argument is normally shown by bounding from below the probability that a fixed vertex is contained in a big component, together with some concentration result. In most cases, at least on a heuristic level, this is done by comparing a component exploration process near a vertex $v$ to some supercritical branching process.

In some ways, \eqref{Lichev's observation} allows us to say that, in product graphs, the vertex degree is almost constant \textit{locally}. Hence, the threshold probability above which it is likely that $v$ will lie in a big component will depend not on the global parameters of the graph, but rather just locally on the degree of the vertex $v$.

We will use this observation in two ways. Firstly, when the base graphs have bounded maximal degree, typical concentration bounds will imply that almost all the vertices have degree very close to the average degree of the graph, and so these methods will allow us to estimate the proportion of vertices which are typically contained in big components, and hence eventually the giant component, in the proof of Theorem \ref{isoperimetric}. The second way, which is perhaps more unusual, will be to apply this reasoning to the vertices in the host graph whose degrees are significantly higher than the average degree, to see that \textbf{whp} many of these vertices are contained in big components even in the subcritical regime, which we will use in the proof of Theorem \ref{many stars}.

We thus begin with the following lemma, utilising the tools developed in \cite[Lemma 4.2]{DEKK22}, which provides a lower bound on the probability that a vertex $v$ belongs to a big component, provided that the percolation probability is significantly larger than $\frac{1}{d_G(v)}$. However, since it will be essential for us to be able to grow components of order super-polynomially large in $t$ in order to prove Theorem \ref{isoperimetric}, unlike in the proof of \cite[Lemma 4.2]{DEKK22}, we will need to run our inductive argument for $\omega(1)$ steps, which will necessitate a more careful and explicit handling of the error terms and probability bounds in the proof.

\begin{lemma}\label{t^k} 
Let $C>1$ be a constant and let $\epsilon>0$ be sufficiently small. Let $G^{(1)}, \ldots, G^{(t)}$ be 
graphs such that $1\le \Delta\left(G^{(i)}\right)\le C$ for all $i\in [t]$. Let $G=\square_{i=1}^{t}G^{(i)}$. Let $v\in V(G)$ be such that $d\coloneqq d_G(v) = \omega(1)$. Let $k=k(t) = o\left( d^{1/3}\right)$ be an integer and let $m_k(d)=d^\frac{k}{6}$. Then, for any $p\ge \frac{1+\epsilon}{d}$ there exists
$c=c(\epsilon,d,k)\ge \left(\frac{\epsilon}{5}\right)^k$ such that 
\begin{align*}
    \mathbb{P}\left[|C_v(G_p)|\ge cm_k\right]\ge y-o(1),
\end{align*}
where $y \coloneqq y(\epsilon)$ is as defined in (\ref{survival prob}). 
\end{lemma}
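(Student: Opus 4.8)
The plan is to run a breadth-first exploration of $C_v(G_p)$ in $k$ "rounds", where in each round we grow the current boundary set by exploring into fresh, high-dimensional projections, so that the edge queries in each round are independent Bernoulli$(p)$ trials against (near-)copies of the base graphs. The key input is the Projection Lemma (Lemma \ref{projection lemma}): given a boundary set $M$ of size $m \le t$, we may cover it by pairwise disjoint projections $H_1,\dots,H_m$ of dimension at least $t-m+1$, and explore the $G_p$-neighbourhood of each $v \in M$ independently inside its own $H_i$. Since $k = o(d^{1/3})$ and the boundary in round $j$ will have size at most roughly $m_j \le d^{j/6} = o(d^{1/2})$, and since $t$ is large relative to any polynomial in $t$-bounded degree quantity... more precisely, since $d = O(t)$-ish is not assumed, but the dimension of the product is $t$ and $k = o(t)$ easily (as $k=o(d^{1/3})$ and $d \le Ct$), the boundary sets stay well below $t$ throughout all $k$ rounds, so the Projection Lemma applies at every step with projections of dimension $t - O(d^{1/2}) = (1-o(1))t$.

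The heart of the argument is a single-round growth estimate. Starting from a single vertex $w$ whose degree is, by \eqref{Lichev's observation}, within $(C-1)\cdot k = o(d^{1/3})$ of $d_G(v)=d$ (since everything we explore lies within distance $k$ of $v$), we expose the edges of $G_p$ at $w$ inside its projection. The number of neighbours discovered stochastically dominates (after discarding a $o(1)$ fraction of "used up" coordinates and using that the projection has degree $(1-o(1))d$) a $\mathrm{Bin}\big((1-o(1))d,\, \tfrac{1+\epsilon}{d}\big)$ variable, whose mean is $(1-o(1))(1+\epsilon) > 1+\tfrac{\epsilon}{2}$ for small $\epsilon$. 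Comparing to a Galton–Watson process with this offspring mean, I would show: (i) with probability at least $y(\epsilon) - o(1)$ the process survives $k$ generations (this is where $y$ enters — it is the survival probability of the limiting supercritical branching process, and a coupling/monotonicity argument with a slightly-subcritical-of-$1+\epsilon$ Poisson or binomial offspring gives the $y(\epsilon)-o(1)$ lower bound for finitely many generations, using $k\to\infty$ only mildly); and (ii) conditioned on survival, the boundary grows by a multiplicative factor of at least, say, $\tfrac{\epsilon}{5}$ per round with overwhelming probability, so that after $k$ rounds $|C_v(G_p)| \ge (\epsilon/5)^k d^{k/6} = c\, m_k$. Step (ii) is a concentration statement: given a boundary of size $M_j$, the new boundary $M_{j+1}$ is a sum of $M_j$ roughly-independent copies of the one-vertex growth, each with mean $\ge 1+\tfrac{\epsilon}{2}$, so Chernoff (Lemma \ref{chernoff}) gives $M_{j+1} \ge (1+\tfrac{\epsilon}{3})M_j \ge \tfrac{\epsilon}{5}\cdot$ (something) with probability $1 - \exp(-\Omega(M_j))$; summing these failure probabilities over $j \le k$ and using that $M_j$ grows geometrically once it is large keeps the total error $o(1)$, provided we first wait $O(\log(1/\epsilon))$ rounds for the boundary to reach size $\omega(1)$ — handling those initial rounds is exactly where the factor $(\epsilon/5)^k$ (rather than $(1+\Omega(\epsilon))^k$) comes from, as we cannot rely on concentration while the boundary is tiny and must instead just bound the conditional growth below by a constant fraction.

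The main obstacle, and the reason this needs more care than \cite[Lemma 4.2]{DEKK22}, is controlling the accumulation of error terms over $k = \omega(1)$ rounds. Three issues compound: first, each application of the Projection Lemma shrinks the available dimension, so I must check that after $k$ rounds we still have dimension $\gg$ anything needed — this is fine since the total boundary ever explored is $O(m_k) = o(d^{1/2}) = o(t)$; second, the degree drift from \eqref{Lichev's observation} must be shown to be negligible, i.e. $(C-1)k = o(d)$, which follows from $k = o(d^{1/3})$; third, and most delicately, the per-round failure probabilities $\exp(-\Omega(M_j))$ must sum to $o(1)$, which forces the argument into two phases (a bounded "seedling" phase of $O_\epsilon(1)$ rounds handled by crude constant-factor bounds, contributing the $(\epsilon/5)$-type loss, and a "mature" phase where $M_j = \omega(1)$ grows geometrically and concentration is cheap). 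Assembling these phases so that the surviving-$k$-generations probability is genuinely $y(\epsilon) - o(1)$, rather than $(1-o(1))y(\epsilon)$ or worse, requires coupling the early rounds to an honest branching process and only invoking concentration afterward; I expect this bookkeeping to be the technical crux.
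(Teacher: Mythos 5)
Your proposal contains a genuine gap that would prevent the argument from working as described, and it misses the recursive structure that is the main point of the paper's proof.

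The core problem is your claim that ``the boundary in round $j$ will have size at most roughly $m_j \le d^{j/6} = o(d^{1/2})$'' and later that ``the total boundary ever explored is $O(m_k) = o(d^{1/2}) = o(t)$.'' This is false: $m_k = d^{k/6}$ already exceeds $d^{1/2}$ once $k \geq 3$, and since $k$ is allowed to be as large as $o(d^{1/3})$, the target component size $d^{k/6}$ can be vastly larger than $t$ (indeed super-polynomially so). But the Projection Lemma requires the set being covered to have size at most $t$, so you cannot apply it to the full boundary for more than a couple of rounds. Relatedly, your per-round growth mechanism --- one BFS generation with offspring mean $\approx 1+\epsilon$, so $M_{j+1} \geq (1+\epsilon/3)M_j$ --- gives at best $(1+\epsilon/3)^k$ after $k$ rounds, which is nowhere near $d^{k/6}$. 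The scaling simply doesn't come out.

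The paper's proof avoids both problems by making the argument genuinely recursive rather than a single linear exploration. Each ``level'' of the recursion does three things: (i) a truncated BFS on $G_{p_1}$ (using a two-round exposure $p = p_1 + p_2 - p_1p_2$ with $p_2 = d^{-4/3}$) grows a set $W_0$ of size exactly $d^{1/2}$, which succeeds with probability $y - o(1)$; (ii) the Projection Lemma covers $W_0$ by disjoint projections of dimension $t - d^{1/2}$, and a sprinkling step with $p_2$ selects $\Theta(d^{1/6})$ fresh vertices $v_{i,j}$, one in each of a new family of disjoint projections $H_{i,j}$; (iii) the induction hypothesis is then applied independently to each $v_{i,j}$ inside its own $H_{i,j}$, yielding $\Theta(d^{1/6})$ disjoint components each of size $\gtrsim (\epsilon/5)^{k-1}d^{(k-1)/6}$. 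Crucially, the Projection Lemma is only ever applied to sets of size $O(d^{1/2}) \ll t$ at each level; the enormous size $d^{k/6}$ arises by multiplying the $d^{1/6}$ branches across $k$ levels of recursion, not by tracking one monolithic frontier. The $(\epsilon/5)^k$ factor also has a different source than you suggest: it is not a ``seedling-phase'' loss, but the accumulated probability cost of the roughly $y(\epsilon) \approx 2\epsilon$ success probability at each recursive level, where the Chernoff count of successful branches is $\gtrsim \tfrac{y}{4}d^{1/6}$ rather than the expected $y\,d^{1/6}$. Finally, the sprinkling step ($p_2$) is essential to obtain fresh randomness for re-exposing edges between $W_0$ and its neighbourhood, which were already queried during the BFS on $G_{p_1}$; your proposal has no mechanism to handle this conditioning.
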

\begin{proof}
We argue by induction on $k$, over all possible values of $C, d$, small enough $\epsilon$ and all possible choices of $G^{(1)},\ldots, G^{(t)}$.

For $k=1$, we run the BFS algorithm (as described in Section \ref{BFS description}) on $G_p$ starting from $v$ with a slight alteration: we terminate the algorithm once $\min\left(|C_v(G_p)|,d^{\frac{1}{2}}\right)$ vertices are in $S\cup Q$. Note that at every point in the algorithm we have $|S\cup Q|\le d^{\frac{1}{2}}$. Therefore, since at each point in the modified algorithm $S \cup Q$ spans a connected set, by (\ref{Lichev's observation}), at each point in the algorithm the first vertex $u$ in the queue has degree at least $d-Cd^{\frac{1}{2}}$ in $G$, and thus has at least $d-(C+1)d^{\frac{1}{2}}$ neighbours (in $G$) in $T$. 

Hence, we can couple the tree $B_1$ built by this truncated BFS algorithm with a Galton-Watson tree $B_2$ rooted at $v$ with offspring distribution $Bin\left(d-(C+1)d^{\frac{1}{2}}, p\right)$ such that $B_1\subseteq B_2$ as long as $|B_1|\le d^{\frac{1}{2}}$. Therefore, since 
\begin{align*}
    \left(d-(C+1)d^{\frac{1}{2}}\right)\cdot p&\ge \frac{(1+\epsilon)\left(d-(C+1)d^{\frac{1}{2}}\right)}{d}\\
    &\ge (1+\epsilon)\left(1-(C+1)d^{-\frac{1}{2}}\right)\\
    &\ge1+\epsilon- 2Cd^{-\frac{1}{2}},
\end{align*}
standard results imply that $B_2$ grows infinitely large with probability at least $y\left(\epsilon-2Cd^{-\frac{1}{2}}\right)-o(1)$ (see, for example, \cite[Theorem 4.3.12]{D19}). Thus, by the above and by (\ref{survival prob}), with probability at least $y-o(1)$ we have that $|C_v(G_p)|\ge d^{\frac{1}{2}}$. Since $c(\epsilon,d,1) \leq 1$, it follows that the statement holds for $k=1$, for all $C, d$, small enough $\epsilon$ and all possible choices of $G^{(1)},\ldots, G^{(t)}$. 

Let $k\ge 2$ and assume the statement holds with $c(\epsilon', d, k-1)=\left(\frac{\epsilon'}{5}\right)^{k-1}$ for all $C, d$, small enough $\epsilon'$ and all possible choices of $G^{(1)}, \ldots, G^{(t)}$. We argue via a two-round exposure. Set $p_2=d^{-\frac{4}{3}}$ and $p_1=\frac{p-p_2}{1-p_2}$ so that $(1-p_1)(1-p_2)=1-p$. Note that $G_p$ has the same distribution as $G_{p_1}\cup G_{p_2}$, and that $p_1=\frac{1+\epsilon'}{d}$ with $\epsilon'\ge\epsilon-d^{-\frac{1}{3}}$. In fact, we will not expose either $G_{p_1}$ or $G_{p_2}$ all at once, but in several stages, each time considering only some subset of the edges.

We begin in a manner similar to $k=1$. We run the BFS algorithm on $G_{p_1}$, starting from $v$, and we terminate the exploration once $\min\left(|C_v(G_{p_1})|, d^{\frac{1}{2}}\right)$ vertices are in $S\cup Q$. Once again, by standard arguments, we have that $|C_v(G_{p_1})|\ge d^{\frac{1}{2}}$ with probability at least $y(\epsilon'-2Cd^{-\frac{1}{2}})-o(1)=y-o(1)$ (where the last equality is by (\ref{survival prob})).

Let $W_0\subseteq C_v(G_{p_1})$ be the set of vertices explored in this process, and let $\mathcal{A}_1$ be the event that $|W_0| = d^{\frac{1}{2}}$, whereby the above 
\begin{equation}\label{e:p1}
\mathbb{P}[\mathcal{A}_1] \geq y-o(1).
\end{equation}
We assume in what follows that $\mathcal{A}_1$ holds. Let us write $W_0=\{v_1,\ldots, v_{d^{\frac{1}{2}}}\}$, and note that $v\in W_0$ is one of the $v_i$. Using Lemma \ref{projection lemma}, we can find pairwise disjoint projections $H_1,\ldots, H_{d^{\frac{1}{2}}}$ of $G$, each having dimension at least $t-d^{\frac{1}{2}}$, such that each $v_i\in W_0$ is in exactly one of the $H_i$ (see the first and second steps in Figure \ref{fig:indc step}). Note that $d\le \Delta(G)\le Ct$, and so $d^{\frac{1}{2}} \leq t$, thus we can apply Lemma \ref{projection lemma}.  

Thus, it follows from observation (\ref{Lichev's observation}) that, for all $i\in \left[d^{\frac{1}{2}}\right]$, we have
\begin{align*}
    |d_G(v)-d_G(v_i)|\le (C-1)\cdot dist_G(v,v_i)\le Cd^\frac{1}{2},
\end{align*}
and therefore
\begin{align*}
    d_{H_i}(v_i)\ge d_G(v_i)-Cd^{\frac{1}{2}}\ge d_G(v)-2Cd^{\frac{1}{2}}.
\end{align*}
Let $W=\bigcup_{i\in \left[d^{\frac{1}{2}}\right]}N_{H_i}(v_i)$. Then, $W\subseteq N_G(W_0)$ and, since the $H_i$ are pairwise disjoint, by the above $|W|\ge d^{\frac{3}{2}}-2Cd$.

We now expose the edges between $W_0$ and $W$ in $G_{p_2}$ (see the third step in Figure \ref{fig:indc step}). Let us denote the vertices in $W$ that are connected with $W_0$ in $G_{p_2}$ by $W'$. Then, $|W'|$ stochastically dominates
\begin{align*}
    Bin\left(d^{\frac{3}{2}}-2Cd,p_2\right).
\end{align*}
Thus, if we let $\mathcal{A}_2$ be the event that $|W'|\ge \frac{d^{\frac{1}{6}}}{3}$ and $|W'|\le d^{\frac{1}{2}}$, then by Lemma \ref{chernoff} we have that 
\begin{equation}\label{e:p2}
\mathbb{P}[\mathcal{A}_2| \mathcal{A}_1]\geq 1-\exp\left(-\frac{d^{\frac{1}{6}}}{15}\right).
\end{equation}
We assume in what follows that $\mathcal{A}_2$ also holds.

Let $W_i'=W'\cap V(H_i)$, and note that the vertices in $W'$ are neighbours in $G$ of $v_i$. Now, for each $i$, we apply once again Lemma \ref{projection lemma} to find a family of $|W'_i|\coloneqq\ell_i\le d^{\frac{1}{2}}$ pairwise disjoint projections of $H_i$, which we denote by $H_{i,1},\ldots, H_{i,\ell_i}$, such that every vertex of $W'_i$ is in exactly one of the $H_{i,j}$, and each of the $H_{i,j}$ is of dimension at least $t-2d^{\frac{1}{2}}$ (see the fourth step in Figure \ref{fig:indc step}). Furthermore, we denote by $v_{i,j}$ the unique vertex of $W_i'$ that is in $H_{i,j}$ (note that, by the above, $v_{i,j}$ is a neighbour of $v_i$ in $G$). Again, by (\ref{Lichev's observation}), for all $i,j$
\begin{align*}
    d_{H_{i,j}}(v_{i,j})\ge d_G(v_{i,j})-2Cd^{\frac{1}{2}} \geq d_G(v_i) - 2C d^{\frac{1}{2}} - C \geq d_G(v)-4Cd^{\frac{1}{2}},
\end{align*}
where we used that, by \eqref{Lichev's observation}, $d_G(v_{i}) - d_G(v_{i,j}) \leq C\cdot dist_G(v_{i},v_{i,j}) = C$. 

Crucially, note that when we ran the BFS algorithm on $G_{p_1}$, we did not query any of the edges in any of the $H_{i,j}$. Indeed, we only queried edges in $W_0$ and between $W_0$ and its neighbourhood, and by construction $E(H_{i,j})\cap E\left(W_0\cup N_G(W_0)\right)=\varnothing$. Noting that 
\[
p_1\cdot d_{H_{i,j}}(v_{i,j})\ge \frac{1+\epsilon-d^{-\frac{1}{3}}}{d}\left(d-4Cd^{\frac{1}{2}}\right)\ge 1+\epsilon-2d^{-\frac{1}{3}},
\]

we may thus apply the induction hypothesis to $v_{i,j}$ in $G_{p_1}\cap H_{i,j}$ and conclude that \begin{align*}
    \mathbb{P}\left[|C_{v_{i,j}}\left(G_{p_1}\cap H_{i,j}\right)|\ge c\left(\epsilon-2d^{-\frac{1}{3}},d_{H_{i,j}}(v_{i,j}), k-1\right)m_{k-1}\left(d_{H_{i,j}}(v_{i,j})\right)\right]
    &\ge y\left(\epsilon-2d^{-\frac{1}{3}}\right)-o(1)\\
    &\ge y-o(1),
\end{align*} 
where the first inequality follows from the induction hypothesis, and the second inequality follows from (\ref{survival prob}). Furthermore, these events are independent for each $H_{i,j}$ (see the fifth step in Figure \ref{fig:indc step}).

Let us define the following indicator random variables
\begin{align*}
    \mathbb{I}(v_{i,j})\coloneqq \begin{cases}
        1 &\quad \text{if } |C_{v_{i,j}}(G_{p_1} \cap H_{i,j})|\ge c\left(\epsilon-2d^{-\frac{1}{3}},d_{H_{i,j}}(v_{i,j}),k-1\right)m_{k-1}\left(d_{H_{i,j}}(v_{i,j})\right); \\
        0 &\quad \text{otherwise,}
    \end{cases}
\end{align*}
and let $\mathcal{A}_3$ be the event that
\begin{align*}
 \sum_{v_{i,j}\in W'}
 \mathbb{I}(v_{i,j})\ge \frac{y d^{\frac{1}{6}}}{4}.
\end{align*}
Then, by Lemma \ref{chernoff} and by (\ref{survival prob}),
\begin{equation}\label{e:p3}
\mathbb{P}\left[\mathcal{A}_3|\mathcal{A}_2,\mathcal{A}_1\right] \ge 1-\exp\left(-\frac{yd^{\frac{1}{6}}}{20}\right) \geq 1 - \exp\left( -\frac{\epsilon d^{\frac{1}{6}}}{20}\right).
\end{equation}

In particular, by \eqref{e:p1}, \eqref{e:p2} and \eqref{e:p3}, 
\begin{equation}\label{e:conclusion}
\mathbb{P}[\mathcal{A}_1 \cup \mathcal{A}_2 \cup \mathcal{A}_3] \geq  y-o(1)-\exp\left( -\frac{\epsilon d^{\frac{1}{6}}}{20}\right)-\exp\left(-\frac{d^{\frac{1}{6}}}{15}\right)=y-o(1).
\end{equation}

However, we note that 
\begin{align*}
    c\left(\epsilon-2d^{-\frac{1}{3}},d_{H_{i,j}}(v_{i,j}),k-1\right)m_{k-1}
    &\geq \left( 1 - \frac{1}{2\epsilon d^{\frac{1}{3}}} \right)^{k-1}\left( \frac{\epsilon}{5} \right)^{k-1}m_{k-1}\left(d_{H_{i,j}}(v_{i,j})\right)\\
    &\geq (1-o(1)) \left( \frac{\epsilon}{5} \right)^{k-1} m_{k-1}\left(d_{H_{i,j}}(v_{i,j})\right),
\end{align*}
where the last inequality follows since $k = o\left( d^{1/3}\right)$. Hence, if $\mathcal{A}_3$ holds, then by \eqref{survival prob}

\begin{align*}
    |C_v(G_p)|&\ge \left(\sum_{v_{i,j}\in W'}\mathbb{I}(v_{i,j})\right)\cdot c\left(\epsilon-2d^{-\frac{1}{3}},d_{H_{i,j}}(v_{i,j}),k-1\right)m_{k-1}\left(d_{H_{i,j}}(v_{i,j})\right)
    \\
    &\ge\left(\frac{y}{4}d^{\frac{1}{6}}\right)\cdot (1-o(1)) \left( \frac{\epsilon}{5} \right)^{k-1}(d-4Cd^{\frac{1}{2}})^{\frac{k-1}{6}}\\
    &\ge \left(\frac{\epsilon}{5}\right)^km_k,
\end{align*}
and so the induction step holds by \eqref{e:conclusion}.
\end{proof}
Figure \ref{fig:indc step} illustrates the induction step in the above Lemma. 
\begin{figure}[H]
\centering
\includegraphics[width=0.6\textwidth]{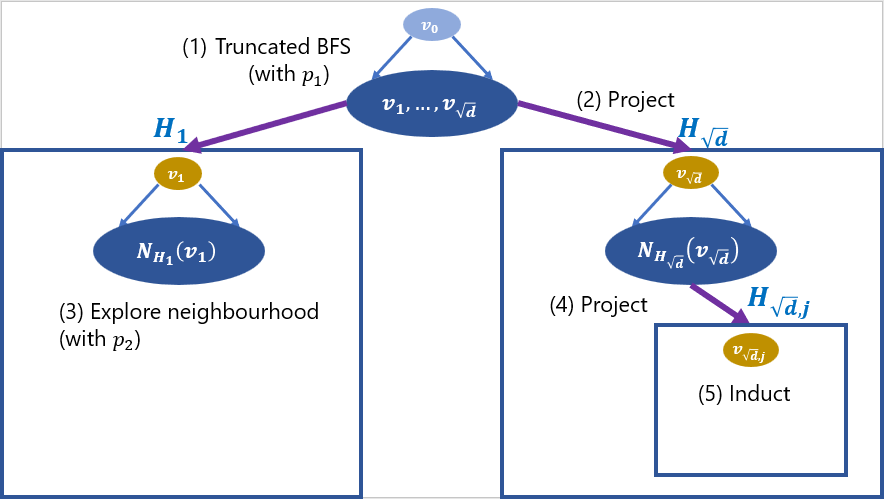}
\caption{The induction step in Lemma \ref{t^k}}
\label{fig:indc step}
\end{figure}

We note that the choice of $p_2 = d^{-\frac{4}{3}}$ in the proof of Lemma \ref{t^k} was relatively arbitrary, and we could take $p_2 = d^{-\gamma}$ for any $1 < \gamma < \frac{3}{2}$, which would lead to a similar statement for all $k = o\left( d^{\gamma -1}\right)$, with
\[
c = \left(\frac{\epsilon}{5}\right)^k \qquad \text{ and } \qquad m_k = d^{\left( \frac{3}{2} - \gamma\right)k}.
\]
In particular, this lemma can be utilised similarly for components almost as large as $d^{d^{\frac{1}{2}}}$, by taking $\gamma$ arbitrarily close to $\frac{3}{2}$ and choosing $k$ appropriately.

In $G(d+1,p)$, isoperimetric considerations alone are enough to guarantee that typically all the big components merge after a sprinkling step. In many other cases, for example in the percolated hypercube $Q^d_p$, isoperimetric considerations alone will not suffice, and a key step to proving that the big components likely merge into a giant component is to show that the vertices in the big components are in some way 'densely' spread throughout the host graph $G$. In an irregular product graph this might not be true, but the following lemma, which is a generalised version of \cite[Lemma 4.5]{DEKK22}, shows that at the very least the big components are typically well-distributed around the vertices of \textit{large degree}.

\begin{lemma}\label{density lemma} 
Let $C>1$ be a constant and let $\epsilon>0$ be a small enough constant. Let $G^{(1)},\ldots, G^{(t)}$ be graphs such that for all $i\in [t]$, $1\le \Delta\left(G^{(i)}\right)\le C$. Let $d=d(t) = \omega(1)$ and let $p \geq \frac{1+\epsilon}{d}$. Then, \textbf{whp}, there are at most $\exp(-d^{\frac{3}{2}})|G|$ vertices $v\in V(G)$ such that $d_G(v)\ge d$ and all components of $G_p$ of order at least $d^{d^{\frac{1}{3}}}$ are at distance (in $G$) greater than two from $v$. 
\end{lemma}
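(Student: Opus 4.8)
For a vertex $v\in V(G)$ with $d_G(v)\ge d$, call $v$ \emph{bad} if every component of $G_p$ of order at least $d^{d^{1/3}}$ is at distance greater than two (in $G$) from $v$. The plan is to show that $\mathbb{P}[v\text{ is bad}]\le\exp\left(-2d^{3/2}\right)$ for each such $v$, and then conclude by linearity of expectation and Markov's inequality: the expected number of bad vertices is then at most $\exp\left(-2d^{3/2}\right)|G|$, so since $d=\omega(1)$ and hence $\exp\left(-d^{3/2}\right)=o(1)$, \textbf{whp} there are fewer than $\exp\left(-d^{3/2}\right)|G|$ bad vertices. Note that it is precisely because the crude bound from Markov's inequality is enough here that the strategy works --- we only need a per-vertex bound beating $\exp\left(-d^{3/2}\right)$, with no independence required across different vertices.

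Fix $v$ with $d_G(v)\ge d$. I will produce $\Theta\left(d^2\right)$ vertices at distance at most two from $v$, each sitting in its own member of a family of pairwise disjoint high-dimensional projections of $G$, in such a way that, after revealing $G_p$, each of these vertices independently lies in a component of $G_p$ of order at least $d^{d^{1/3}}$ with probability at least some constant $c_0=c_0(\epsilon)>0$; as any one such event makes $v$ not bad, this yields $\mathbb{P}[v\text{ is bad}]\le(1-c_0)^{\Theta(d^2)}=\exp\left(-\Theta\left(d^2\right)\right)\le\exp\left(-2d^{3/2}\right)$ for $t$ large. The family is built by two nested uses of the Projection Lemma (Lemma \ref{projection lemma}), with $\ell:=\left\lfloor\epsilon d/(8C^2)\right\rfloor$ points at each level; this is $\ge 1$ for $t$ large (as $d=\omega(1)$) and $\le t$ (as $d\le\Delta(G)\le Ct$). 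First, take $\ell$ neighbours $u_1,\ldots,u_\ell$ of $v$ in $G$ and apply Lemma \ref{projection lemma} to obtain pairwise disjoint projections $H_1,\ldots,H_\ell$ of $G$, each of dimension at least $t-\ell+1$, with $u_i\in H_i$. Next, inside each $H_i$ --- in which, by \eqref{Lichev's observation}, $u_i$ still has degree at least $d-C\ell\ge d/2>\ell$ --- take $\ell$ neighbours $w_{i,1},\ldots,w_{i,\ell}$ of $u_i$ in $H_i$ and apply Lemma \ref{projection lemma} within $H_i$ to obtain pairwise disjoint projections $H_{i,1},\ldots,H_{i,\ell}$ of $H_i$, each of dimension at least $t-2\ell+2$, with $w_{i,j}\in H_{i,j}$. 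The $\ell^2=\Theta\left(d^2\right)$ graphs $H_{i,j}$ are pairwise vertex-disjoint; each $w_{i,j}$ is at distance at most two from $v$ in $G$; and, by \eqref{Lichev's observation} applied through both levels, $d_{H_{i,j}}(w_{i,j})\ge d-2C\ell\ge(1-\epsilon/2)d$, so $p\cdot d_{H_{i,j}}(w_{i,j})\ge(1+\epsilon)(1-\epsilon/2)\ge1+\epsilon/3$ for $\epsilon$ small. Finally, reveal the edges of $G_p$ inside the pairwise disjoint sets $E\left(H_{i,j}\right)$, and for each $(i,j)$ apply Lemma \ref{t^k} to $w_{i,j}$ in the product graph $H_{i,j}$, using the strengthened form from the remark following it with a fixed $\gamma\in\left(\tfrac43,\tfrac32\right)$ and $k=\Theta\left(d^{1/3}\right)$ (so that it produces components of order well above $d^{d^{1/3}}$): since $d_{H_{i,j}}(w_{i,j})=\omega(1)$ and $p\ge\frac{1+\epsilon/3}{d_{H_{i,j}}(w_{i,j})}$, this gives $\mathbb{P}\left[\left|C_{w_{i,j}}\left((H_{i,j})_p\right)\right|\ge d^{d^{1/3}}\right]\ge y(\epsilon/3)-o(1)\ge c_0$ for $t$ large, where $(H_{i,j})_p$ is the restriction of $G_p$ to $E\left(H_{i,j}\right)$. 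As $C_{w_{i,j}}\left((H_{i,j})_p\right)\subseteq C_{w_{i,j}}(G_p)$ and these events are independent (disjoint edge sets), this is exactly what was promised.

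The one point requiring genuine care is the calibration of the exponent. A single layer of the Projection Lemma provides only $\Theta(d)$ disjoint projections, because each trivialised coordinate costs up to $C$ in degree, and once more than about $\epsilon d/C$ coordinates have been trivialised the product $p\cdot(\text{degree})$ is no longer bounded away from $1$ and Lemma \ref{t^k} cannot be invoked; that would give only $\mathbb{P}[v\text{ is bad}]\le\exp(-\Theta(d))$, which fails to beat $\exp\left(-d^{3/2}\right)$. Passing to two levels squares the number of independent attempts to $\Theta\left(d^2\right)=\omega\left(d^{3/2}\right)$ while only doubling the total dimension loss to $O(\epsilon d)$, which is still $o(t)$ and leaves ample room for the further $O\left(d^{5/6}\right)$ dimensions that Lemma \ref{t^k} consumes inside each $H_{i,j}$. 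The remaining ingredients --- the degree bookkeeping through both layers via \eqref{Lichev's observation}, and the fact that component sizes as large as $d^{d^{1/3}}$ force the refined version of Lemma \ref{t^k} from the remark (the stated version only reaches order $d^{o(d^{1/3})}$) --- are routine.
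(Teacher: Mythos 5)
Your proof is correct and follows essentially the same strategy as the paper's: produce $\Theta(d^2)$ pairwise disjoint high-dimensional projections of $G$, each containing a vertex at distance at most two from $v$ whose degree inside its projection remains $(1-O(\epsilon))d$, apply Lemma~\ref{t^k} (in its strengthened form from the following remark) independently in each, and conclude by Markov's inequality. The one genuine difference is how you produce the disjoint projections: the paper constructs them explicitly, fixing a specific set of $\epsilon^2 d$ coordinates and indexing the projections by unordered pairs $\{\ell,m\}$ from that set, whereas you obtain them more abstractly by two nested applications of the Projection Lemma (Lemma~\ref{projection lemma}) to $\ell$ first neighbours and then $\ell$ second neighbours. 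Your route is marginally more robust (it does not need the first $\epsilon^2 d$ coordinates of $v$ to be non-isolated in their base graphs, a point the paper silently elides), while the paper's is marginally more elementary. You also correctly observe that the stated version of Lemma~\ref{t^k} only reaches components of order $d^{o(d^{1/3})}$, so the remark with $\gamma\in(\tfrac43,\tfrac32)$ is genuinely required to reach $d^{d^{1/3}}$ --- the paper glosses this over with the phrase ``with an appropriate choice of $k$''. Finally, you use the simple independence bound $(1-c_0)^{\Theta(d^2)}$ where the paper invokes Chernoff; both give $\exp(-\Theta(d^2))$, comfortably beating $\exp(-d^{3/2})$.
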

\begin{proof}
Let $\epsilon$ be a small enough constant. Fix $v\in V(G)$ with $d_G(v)\ge d$, and let us write $v=(v_1,\ldots, v_t)$ such that $\forall i\in[t], v_i\in V\left(G^{(i)}\right)$. Denote by $w_i$ an arbitrarily chosen neighbour of $v_i$ in $G^{(i)}$. Furthermore, for any $\ell,m$ such that $1\le \ell\neq m \le \epsilon^2d$ we define
\begin{align*}
    \begin{cases}
        H^{(i)}_{\ell,m}=\{v_i\} & \text{when } 1\le i \le \epsilon^2d, i\notin \left\{\ell, m\right\}; \\
        H^{(i)}_{\ell,m}=\{w_i\} & \text{when } i\in\left\{\ell,m\right\}; \text{ and,}\\
        H^{(i)}_{\ell,m}=G^{(i)} & \text{when } i>\epsilon^2d.
    \end{cases}
\end{align*}
Let $H_{\ell,m}=\square_{i=1}^{t}H^{(i)}_{\ell,m}$.

We defined in this manner $\binom{\epsilon^2 d}{2}\ge \frac{\epsilon^4d^2}{3}$ pairwise disjoint projections $H_{\ell,m}$, each of dimension $t - \epsilon^2d$, such that each $H_{\ell,m}$ has a vertex at distance $2$ (in $G$) from $v$, which we denote by $w_{\ell,m}$. Observe that by (\ref{Lichev's observation}), for $\epsilon>0$ sufficiently small,
\begin{align*}
    d_{H_{\ell,m}}(w_{\ell,m})\ge d_G(v)-C\epsilon^2 d-2(C-1)\ge (1-2C\epsilon^2)d\ge \left(1-\frac{\epsilon}{2}\right)d.
\end{align*}
Thus, by Lemma \ref{t^k}, with an appropriate choice of $k$, we have that $w_{\ell,m}$ belongs to a component of $\left(H_{{\ell,m}}\right)_p$ of order $d^{d^{\frac{1}{3}}}$ with probability at least $y\left( \frac{\epsilon}{3}\right)-o(1)> \frac{\epsilon}{3}$. Since the $H_{\ell,m}$ are pairwise disjoint, these events are independent for different $w_{\ell,m}$. Thus, by Lemma \ref{chernoff}, there is some $c'(\epsilon) >0$ such that at least one of the $w_{\ell,m}$ belongs to a component whose order is at least $d^{d^{\frac{1}{3}}}$ with probability at least $1-\exp(-c'd^2)$. Hence, the expected number of vertices $v\in V(G)$ such that $d_G(v)\ge d$ and $v$ is not distance at most $2$ from a component of order $d^{d^{\frac{1}{3}}}$ is at most $\exp(-c'd^2)|G|$. Therefore, by Markov's inequality, \textbf{whp} there are at most $\exp(-d^{\frac{3}{2}})|G|$ such vertices.
\end{proof}
Note, in particular, that since $d = \Theta(t)$, if the orders of the base graphs $G^{(i)}$, $|G^{(i)}|$, are growing sufficiently slowly, then $\exp(-d^{\frac{3}{2}})|G| = o(1)$, and so \textbf{whp} \emph{all} the vertices of $G$ of large order will be close to a big component.

Such a `density lemma' can then be used in graphs with sufficiently good isoperimetric inequalities to show that the big components in the percolated subgraph are typically so well connected that, after a sprinkling step, they all merge \textbf{whp}. More concretely,
one often shows that between any suitable partition of the big components, the host graph will typically contain a large family of short edge-disjoint paths between the two sides of this partition. If the number and length of these paths is sufficiently large/small, respectively, compared to the size of the big components, we can conclude that after sprinkling \textbf{whp} a large proportion of these components merge. Since we will use a variant of this argument in the proof of both Theorems \ref{isoperimetric} and \ref{many stars}, let us state a very general version of it.

Given graphs $H\subseteq G$ and a subset $X \subseteq V(G)$, we say a partition $X = A \cup B$ is \emph{($H$-)component respecting} if for every component $K$ of $H$, $K\cap X$ is fully contained in either $A$ or $B$.

\begin{lemma}\label{l:sprinkling}
Let $p \in (0,1)$, let $c \in \left(0,\frac{1}{2}\right)$ and let $m\in \mathbb{N}$. Assume $k=\omega\left(rp^{-m}\right)$. Let $H\subseteq G$ be graphs and let $X \subseteq V(G)$ be a subset such that 
\begin{enumerate}
    \item[(C1)]\label{i:order} for every component $K$ of $H$ which meets $X$,  we have that $|X\cap K|\ge k$;
    \item[(C2)]\label{i:paths} for any partition $X=A\cup B$ with $|A|, |B|\ge c|X|$ that is $H$-component respecting there is a family of at least $\frac{|X|}{r}$ edge-disjoint $A-B$ paths of length at most $m$ in $G$.
\end{enumerate}
Then \textbf{whp} $H\cup G_p$ contains a component with at least $(1-c)|X|$ vertices of $X$.
\end{lemma}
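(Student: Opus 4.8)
The plan is the usual two-stage sprinkling argument: a probabilistic step showing that \textbf{whp} $G_p$ connects the two sides of every balanced, component-respecting cut of $X$, followed by a short deterministic step extracting the large component. Throughout we may assume $X\neq\varnothing$, as otherwise the statement is vacuous.

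\medskip

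\noindent\textbf{Probabilistic step.} Fix an $H$-component-respecting partition $X=A\cup B$ with $|A|,|B|\ge c|X|$. By (C2) there is a family $\mathcal{Q}$ of at least $|X|/r$ edge-disjoint $A$--$B$ paths in $G$, each of length at most $m$. A path of length $\ell\le m$ is entirely present in $G_p$ with probability $p^{\ell}\ge p^{m}$, and since the paths in $\mathcal{Q}$ are edge-disjoint these events are mutually independent; hence the probability that \emph{no} path of $\mathcal{Q}$ survives in $G_p$ — equivalently, that $A$ and $B$ lie in distinct components of $H\cup G_p$ — is at most $(1-p^{m})^{|X|/r}\le \exp(-p^{m}|X|/r)$. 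By (C1) every component of $H$ meeting $X$ contains at least $k$ vertices of $X$, so at most $|X|/k$ components of $H$ meet $X$ and therefore there are at most $2^{|X|/k}$ $H$-component-respecting partitions of $X$. A union bound gives
\[
\mathbb{P}\big[\exists\text{ such }A,B\text{ separated in }H\cup G_p\big]\ \le\ 2^{|X|/k}\exp\!\left(-\frac{p^{m}|X|}{r}\right)\ =\ \exp\!\left(\frac{|X|\ln 2}{k}-\frac{p^{m}|X|}{r}\right).
\]
Since $k=\omega\!\left(rp^{-m}\right)$ we have $\tfrac{\ln 2}{k}=o\!\left(p^{m}/r\right)$, and since $|X|\ge k=\omega\!\left(rp^{-m}\right)$ the quantity $p^{m}|X|/r$ tends to infinity; hence the exponent tends to $-\infty$, and \textbf{whp} every $H$-component-respecting partition $X=A\cup B$ with $|A|,|B|\ge c|X|$ has $A$ and $B$ in a common component of $H\cup G_p$.

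\medskip

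\noindent\textbf{Deterministic step.} Condition on this \textbf{whp} event and suppose, for contradiction, that every component of $H\cup G_p$ contains fewer than $(1-c)|X|$ vertices of $X$. The components of $H\cup G_p$ cut $X$ into blocks $Y_1,\dots,Y_\ell$, each of size less than $(1-c)|X|$, and since $H\subseteq H\cup G_p$ each $Y_i$ is a union of components of $H$. As $c<\tfrac12$ and every block has fewer than $(1-c)|X|$ vertices of $X$, one can $2$-colour the blocks so that each colour class spans at least $c|X|$ vertices of $X$ (an elementary combinatorial fact, and the only place the hypothesis $c<\tfrac12$ is used). Letting $A$ and $B$ be the two resulting unions of blocks, $X=A\cup B$ is $H$-component-respecting with $|A|,|B|\ge c|X|$, so by the \textbf{whp} event $A$ and $B$ lie in the same component of $H\cup G_p$ — contradicting the fact that $A$ and $B$ are unions of distinct components of $H\cup G_p$. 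Hence \textbf{whp} $H\cup G_p$ contains a component with at least $(1-c)|X|$ vertices of $X$.

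\medskip

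\noindent The crux is purely quantitative: the number of relevant cuts, $2^{|X|/k}$ (controlled via (C1)), must be small compared with the reciprocal of the per-cut failure probability, $\exp(p^{m}|X|/r)$ (obtained from (C2) together with edge-disjointness, which buys independence), and this balance is exactly the content of the assumption $k=\omega(rp^{-m})$. Everything else — the path-survival estimate and the block-colouring — is routine.
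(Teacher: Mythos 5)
Your probabilistic step reproduces the paper's argument line for line: the per-partition failure probability $(1-p^m)^{|X|/r}\le\exp(-p^m|X|/r)$ coming from the edge-disjoint (hence independent) paths of (C2), the bound of $2^{|X|/k}$ on the number of relevant partitions coming from (C1), and the union bound balanced against $k=\omega(rp^{-m})$ together with $|X|\ge k$. So the approach is the same.

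The one place you are more explicit than the paper is the deterministic step, where you invoke the ``elementary combinatorial fact'' that blocks of $X$ each of size less than $(1-c)|X|$ admit a $2$-colouring with both colour classes of size at least $c|X|$. As stated for all $c\in(0,\tfrac12)$, that fact is actually false once $c>\tfrac13$: take three blocks each of size $|X|/3$; each is below $(1-c)|X|$, yet any $2$-colouring leaves one side of size $|X|/3<c|X|$. (When $c\le\tfrac13$ it does hold: if some block already has size $\ge c|X|$, use it as one side --- its complement then has size $>c|X|$ since the block is below $(1-c)|X|$ --- and otherwise a greedy ``add to the currently smaller side'' leaves the two sides differing by less than the maximal block size, hence each above $(1-c)|X|/2\ge c|X|$.) This is not a defect introduced by you: the paper's proof opens with exactly the same implicit leap, asserting without justification that the absence of a $(1-c)|X|$-large component yields a balanced $H$-component-respecting partition separated in $G_p$. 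It is a genuine over-claim across the stated range of $c$, but harmless in context, since the lemma is only ever applied with $c=\tfrac1t$ and $c=\tfrac13$; you have simply made the missing step visible by spelling it out.
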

\begin{proof}
If there is no component in $H \cup G_p$ which contains at least $(1-c)|X|$ vertices of $X$, then there is some $H$-component respecting partition $X = A \cup B$ with $|A|,|B| \geq c|X|$ such that there is no path between $A$ and $B$ in $G_p$.

By (C2), for each such $H$-component respecting partition $X=A\cup B$ there is a family of at least $\frac{|X|}{r}$ many edge-disjoint $A-B$ paths of length at most $m$ in $G$. The probability that none of these paths are in $G_p$ is at most
\[
(1-p^m)^{\frac{|X|}{r}} \leq \exp\left( - \frac{|X|p^m}{r}\right).
\]

On the other hand, by (C1), for every component $K$ of $H$ which meets $X$, we have that $|X\cap K|\ge k$. Hence, the total number of $H$-component respecting partitions $X=A\cup B$ with $|A|, |B|\ge c|X|$ is at most $2^{\frac{|X|}{k}}$.

Therefore, by the union bound, the probability that $H \cup G_p$ does not contain a component of order at least $(1-c)|X|$ is at most
\[
2^{\frac{|X|}{k}}\exp\left( - \frac{|X|p^m}{r}\right) = \exp\left(-\Omega\left(\frac{k}{rp^{-m}}\right)\right) =o(1),
\]
where the first equality is because $|X|\ge |X\cap K| \ge k$ by (C1), and the second inequality follows from our assumption that $k=\omega\left(rp^{-m}\right)$.
\end{proof}

In the remainder of this section, we prove Theorems \ref{isoperimetric} and \ref{many stars}.

\subsection{Proof of Theorem \ref{isoperimetric}}
\begin{proof}[Proof of Theorem \ref{isoperimetric}]
Note that, since all of the $G^{(i)}$ are non-trivial, we have $t \leq d \leq Ct$ and $|G| \geq 2^t$, and therefore, 
\[
t^{t^{\frac{1}{4}}} = o\left( d^{d^{\frac{3}{10}}}\right) = o(|G|).
\]

We argue via a two-round exposure. Set $p_2=\frac{1}{d\log d}$ and $p_1=\frac{p-p_2}{1-p_2}$, so that $(1-p_1)(1-p_2)=1-p$. Note that $G_p$ has the same distribution as $G_{p_1}\cup G_{p_2}$ and that $p_1=\frac{1+\epsilon-o(1)}{d}$.

Set $I\coloneqq \left[\left(1-\frac{1}{\log d}\right)d, \left(1+\frac{1}{\log d}\right)d\right]$, and denote the set of vertices whose degree in $G$ lies in the interval $I$ by $V_1\subseteq V(G)$. Note that the degree of a uniformly chosen vertex in $G$ is distributed as the sum of $t$ random variables, each of which is bounded by $C$. Since the expected degree of a uniformly chosen vertex is the average degree $d$, we have by Lemma \ref{azuma} that 
$|V\setminus V_1|\le 4\exp\left(-\frac{d}{\log^2d}\right)|G|.$

Let $W$ be the set of vertices belonging to components of order at least $d^{d^{\frac{3}{10}}}$ in $G_{p_1}$. Since for every $v\in V_1$ we have that $d_G(v)\cdot p_1=1+\epsilon-o(1)$, we have by Lemma \ref{t^k} that
\begin{align} \label{e: prob at least}
    \text{every } v\in V_1 \text{ belongs to } W \text{ with probability at least } y(\epsilon)-o(1).
\end{align}
We assume henceforth that the above stated \textbf{whp} statements of $G_{p_1}$ hold. Note furthermore that by adding or deleting an edge from $G_{p_1}$ we can change the number of vertices in $W$ by at most $2d^{d^\frac{3}{10}} = o\left( \frac{|G|}{d}\right)$. Hence, by Lemma \ref{azuma} applied to the edge-exposure martingale on $G_{p_1}$ of length $\frac{|G| d}{2}$, we see that \textbf{whp} $|W| = \left(y(\epsilon)+o(1)\right)|G|$. Note that, by Lemma \ref{density lemma}, \textbf{whp} there are at most $\exp\left(-d^{\frac{3}{2}}\right)|G|$ vertices at distance (in $G$) greater than $2$ from $W$.

Let $A\cup B$ be a partition of $W$ into two parts, $A$ and $B$, with $|A|\le |B|$ and $|A|\ge \frac{|W|}{t}$. Let $A'$ be the set of vertices in $A$ together with all vertices in $V(G)\setminus B$ at distance at most $2$ from $A$, and let $B'$ be the set of vertices in $B$ together with all vertices in $V(G)\setminus A'$ at distance at most $2$ from $A$. By Lemma \ref{isoperimetric}, we have 
$$i(G)\ge \frac{1}{2}\min_{j\in[t]}i\left(G^{(j)}\right)\ge \frac{1}{2}t^{-t^{\frac{1}{4}}}.$$  
Let $E'\coloneqq E(A', (A')^C).$
We thus have that
\begin{align}\label{e:fsize}
    |E'| =e(A', (A')^C) \ge \frac{|W|}{t} \cdot \frac{1}{2}t^{-t^{\frac{1}{4}}}
     =      \frac{|W|}{2}t^{-t^{\frac{1}{4}}-1}
     =\Omega\left(\exp\left(-t^{\frac{1}{4}}\log t\right)|G|\right).
\end{align}
Recall that \textbf{whp} $|V(G) \setminus V_1|\le 4\exp\left(-\frac{d}{\log^2d}\right)|G|$ and $|V(G) \setminus N_G^2(W)| \leq \exp\left(-d^{\frac{3}{2}}\right)|G|$, where $N_G^2(W)$ is the set of vertices at distance at most two from $W$. Thus, \textbf{whp} 
\[
|V(G) \setminus \left(V_1\cap \left(A'\cup B'\right)\right)| \leq 5\exp\left(-\frac{d}{\log^2d}\right)|G|.
\]
Therefore, \textbf{whp} the number of edges in $E'$ which do not meet $B'$ is at most
\begin{align*}
    2\Delta(G)\cdot 5\exp\left(-\frac{d}{\log^2d}\right)|G|\le 10Ct\exp\left(-\frac{d}{\log^2d}\right)|G|=o(|E'|).
\end{align*}
Hence, \textbf{whp} at least $\frac{|E'|}{2}$ of the edges in $E'$ have their end-vertices in $B'$. 

Since each vertex in $A'$ is at distance at most $2$ from $A$, and similarly every vertex in $B'$ is at distance at most $2$ from $B$, we can extend these edges to a family of $\frac{|E'|}{2}$ paths of length at most $5$ between $A$ and $B$. Since every edge participates in at most $5\Delta(G)^4$ paths of length $5$, we can greedily thin this family to a set of \[
\frac{|E'|}{50\Delta(G)^4} \geq {|W|}{t^{-t^{\frac{1}{4}} - 6}}
\]
edge-disjoint paths of length at most $5$, where the inequality follows from  \eqref{e:fsize} and the fact that $\Delta(G) \leq Ct$.

We can thus apply Lemma \ref{l:sprinkling}, with $H=G_{p_1}, c=\frac{1}{t}, X=W, p=p_2, k=d^{d^{\frac{3}{10}}},r=t^{t^{\frac{1}{4}}+6}$, and $m=5$. Indeed, by our assumptions on $G_{p_1}$, we have that \textbf{whp} both conditions in Lemma $\ref{l:sprinkling}$ hold, and
\begin{align*}
rp^{-m} &=t^{t^{\frac{1}{4}}+6} p_2^{-5}
\leq t^{t^{\frac{1}{4}}+6}\left(d\log d\right)^5
= o\left(d^{d^{\frac{3}{10}}}\right) = o(k).
\end{align*}
Hence, \textbf{whp} $G_p$ contains a component $L_1$ with at least $\left(1-\frac{1}{t}\right)|W|= (y(\epsilon)-o(1))|G|$ vertices from $W$. 

As for the remaining components, observe that by \eqref{Lichev's observation} we can couple the first $\sqrt{d}$ steps of a BFS exploration process in $G_p$ starting from $v\in V_1$ from above by a Galton-Watson branching process with offspring distribution $Bin\left(\left(1+\frac{1}{\log d}\right)d + C\sqrt{d},p\right)$. Then, by standard results on Galton-Watson trees (see, for example, \cite[Theorem 4.3.12]{D19}), we have that
\begin{align} \label{e: prob at most}
    \forall v\in V_1\colon \mathbb{P}\left[|C_v(G_p)|\ge \sqrt{d}\right]\le y(\epsilon)+o(1).
\end{align}
Let $W'$ be the set of vertices of $V_1$ which lie in components of order larger than $\sqrt{d}$ in $G_p$, noting that $W \subseteq W'$. 

Then, by (\ref{e: prob at most}), $\mathbb{E}|W'| \leq \left(y(\epsilon)+o(1)\right)|V_1|$. Furthermore, by (\ref{e: prob at least}), $\mathbb{E}(|W'|) \geq \mathbb{E}(|W|) \geq \left(y(\epsilon)-o(1)\right)|V_1|$, and so  $\mathbb{E}|W'|=\left(y(\epsilon)+o(1)\right)|V_1|$. Hence, by a similar argument as before, by Lemma \ref{azuma} we have that \textbf{whp} $|W'|= (y(\epsilon)+o(1))|V_1|$. 

In particular, every component of $G_p$ apart from $L_1$ either meets $V_1 \setminus W'$, and so has order at most $\sqrt{d} = o(|G|)$, or is contained in $(V(G) \setminus V_1) \cup (W' \setminus L_1)$, and so has order at most \[
|V(G) \setminus V_1| + |W' \setminus L_1| = o(|G|),
\]
as required.
\end{proof}

\subsection{Proof of Theorem \ref{many stars}} 

Throughout the section, we let $G$ be the $t$-fold Cartesian product of the star $S(1,s)$ with $s$ leaves. Observe that the vertex degrees in $G$ are not very well-distributed, in the following sense: whilst it is easy to see that the average degree $d\coloneqq d(G)=\frac{2st}{s+1} \approx 2t$ for large enough $s$, the number of vertices $v$ such that $d_G(v)\ge \frac{1+\epsilon}{p}$ is polynomially large in $|G|$ for any choice of $\frac{1}{3t}<p \le \frac{1-\epsilon}{d}$, and is in fact of order $|G|^{1-o_s(1)}$. In particular, for such values of $p$, even though we are in the subcritical regime, for many of the vertices the percolated graph $G_p$ looks \textit{locally supercritical}. We note that in order to make the calculations and explanations cleaner and more transparent, we will in fact restrict our attention to a smaller set of vertices, whose degree in $G$ is much larger than $(1+\epsilon)3t$.
To give some intuition to our statements and proof, observe that as with the hypercube $Q^t$, we can think of $G = \left(S(1,s)\right)^t$ as consisting of a number of \emph{layers}, $M_0,M_1,\ldots, M_t$, where the $z$-th layer, $M_z$, consists of the set of vertices in $G$ such that exactly $z$ of their coordinates are centres of stars, and so their other $t-z$ coordinates are leaves (indeed, comparing to the $t$-dimensional hypercube, one can have the $z$-th layer there as the layer with $z$ coordinates being one and the other $t-z$ coordinates being zero). It is easy to see then that the degrees of the vertices in each fixed layer are the same; explicitly, 
\[
d_G(v) = zs + (t-z) \qquad \text{ for all } v\in M_z,
\]
and that the edges of $G$ are only between `adjacent' layers, that is, between $M_z$ and $M_{z-1}$, and between $M_z$ and $M_{z+1}$. Let $\Gamma_z$ be the induced subgraph of $G$ between the layers $M_z$ and $M_{z-1}$. Observe that this graph is bipartite and biregular, such that any vertex in the $z$-th layer, $u\in M_z$, has degree $d_{\Gamma_z}(u) = zs$, and any vertex in the $(z-1)$-th layer, $v \in M_{z-1}$ has degree $d_{\Gamma_z}(v)=t-z+1$. 

Whilst these graphs $\Gamma_z$ are not product graphs, they are subgraphs of the product graph $G$, and so many of the tools we developed earlier in the paper can be adjusted and applied here. In particular, for a fixed subcritical $p > \frac{1}{3t}$, for large enough $r$, the percolation process around the vertices in $M_z$ looks locally supercritical. In fact, this phenomenon is so pronounced, that even if we restrict ourselves to the subgraph $\Gamma_z$ in which the degrees of the vertices in $M_{z-1}$ are \emph{much} smaller than in $G$, we still expect the percolation clusters around the vertices in $M_z$ to grow quite large, and in particular to contain \emph{many} vertices in $M_z$.

Explicitly, taking $z$ to be, say, $z=\frac{t}{\sqrt{s}}$, we can see that the number of paths of length two (in $\Gamma_z$), starting from a fixed $v\in M_z$ and ending in $M_z$ is at least
\[
\frac{s t}{\sqrt{s}} \left(1- \frac{1}{\sqrt{s}}\right)t = \left(1-o_s(1)\right)\sqrt{s}t^2,
\]
and so, naively, if we look in the second neighbourhood of $v$ in $(\Gamma_z)_p$, we expect to find $\Theta\left( p^2\sqrt{s}t^2 \right) = \Omega(\sqrt{s}) > 1$ vertices in $M_z$, for large enough $s$. Hence, we expect the early stages of a `two-step' exploration process in $(\Gamma_z)_p$, to stochastically dominate a supercritical branching process, and so to grow to a large size, say $\sqrt{t}$, with probability bounded away from zero.

Then, using the fact that $\Gamma_z$ is contained in a product graph $G$, we can refine the proof of Lemma \ref{t^k} and use projections in order to apply this argument inductively to build a large connected set, of polynomial size in $t$, which contains many vertices in $M_z$.

Let us now formalise the above heuristics. 
\begin{lemma}\label{two step t^k}
Let $s$ be a large enough integer, let $\epsilon, \delta>0$ be sufficiently small constants, and let $k$ be an integer. Let $\frac{1-\epsilon}{3t}\le p \le \frac{1}{t}$. Let $z$ be such that $\frac{1+\delta}{10\sqrt{s}p} \le z \le \frac{t}{\sqrt{s}}$. 
Let $M_{z}$ be the set of vertices with $z$ centre coordinates in $G$, and let $v\in M_{z}$. Then, there exist positive constants $c_1=c(z,k,\epsilon, \delta)$ and $c_2=(z,k,\epsilon, \delta)$ such that with probability at least $c_1$, there is a set of vertices $W$ such that $v\in W$, $W$ is connected in $G_p$ and $|W\cap M_{z}|\ge c_2t^{\frac{k}{6}}$.
\end{lemma}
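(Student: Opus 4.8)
The plan is to mirror the inductive structure of Lemma \ref{t^k}, but with the single-step BFS exploration replaced by a \emph{two-step} exploration in the bipartite subgraph $\Gamma_z$, so that the object that grows is a connected set whose trace on the layer $M_z$ is large. I would argue by induction on $k$. For the base case $k=1$: starting from $v\in M_z$, run a truncated BFS on $(\Gamma_z)_p$ and terminate once $\min(|C_v|, t^{1/2})$ vertices of $M_z$ have been reached. As long as fewer than $t^{1/2}$ vertices of $M_z$ are explored, each newly processed vertex $u\in M_z$ has degree $zs$ in $\Gamma_z$, of which all but $O(t^{1/2})$ lead to unexplored vertices in $M_{z-1}$, and each such vertex in $M_{z-1}$ has degree $t-z+1 = (1-o_s(1))t$ in $\Gamma_z$, again almost all to unexplored vertices of $M_z$. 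Thus the number of vertices of $M_z$ discovered two steps out from $u$ in $(\Gamma_z)_p$ stochastically dominates (a thinning of) a branching step with mean at least $(1-o_s(1)) zs\, t\, p^2 \ge (1-o_s(1))\tfrac{1+\delta}{10}\,t\,p \ge \tfrac{1-\epsilon}{30}(1+\delta)(1-o_s(1))$, which, for $s$ large and $\epsilon,\delta$ small, exceeds $1$ by a constant factor (here I use the hypothesis $z \ge \tfrac{1+\delta}{10\sqrt s p}$ to turn $zs p^2 t$ into $\Theta(zsp^2 t)$ bounded below by a constant $>1$). Coupling the two-step exploration tree from below by a supercritical Galton--Watson tree with this offspring mean, standard results (as in \cite[Theorem 4.3.12]{D19}) give that with probability bounded away from zero the exploration reaches $t^{1/2}$ vertices of $M_z$; the corresponding explored set $W$ is connected in $G_p$ (since $\Gamma_z \subseteq G$). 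This furnishes constants $c_1, c_2>0$ for $k=1$.

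For the induction step, I would imitate the two-round exposure in the proof of Lemma \ref{t^k}: split $p = p_1 \vee p_2$ with $p_2 = t^{-\gamma}$ for a suitable $\gamma \in (1, 3/2)$ and $p_1$ essentially $p$. First build, using the $k=1$ argument on $G_{p_1}$ (restricted to $\Gamma_z$), a connected set $W_0$ with $|W_0 \cap M_z| = t^{1/2}$, occurring with constant probability. Enumerate $W_0 \cap M_z = \{v_1,\dots,v_{t^{1/2}}\}$ and apply the Projection Lemma (Lemma \ref{projection lemma}) to obtain pairwise disjoint projections $H_1,\dots,H_{t^{1/2}}$ of $G$, each of dimension $t - t^{1/2}$, with $v_i \in H_i$; because $W_0$ is connected and of bounded diameter, each $v_i$ still lies in layer $M_{z}$ of $H_i$ in the appropriate relative sense and retains $(1-o(1))zs$ coordinates of ``centre type'' available inside $H_i$. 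Expose $G_{p_2}$ between $W_0$ and $\bigcup_i N_{H_i}(v_i)$ — but now along two-step paths of $\Gamma_z$ inside each $H_i$ — to obtain a set $W'$ of $\Theta(t^{1/6})$ vertices of $M_z$ reachable from $W_0$, via Chernoff (Lemma \ref{chernoff}); then split each $H_i$ further by the Projection Lemma into projections $H_{i,j}$ of dimension $t - 2t^{1/2}$, each containing one vertex $v_{i,j}\in W'$, and apply the induction hypothesis with parameter $k-1$ to each $v_{i,j}$ inside $(\Gamma_z \cap H_{i,j})_{p_1}$, noting that these events are independent across $(i,j)$ since the $H_{i,j}$ are disjoint and none of their edges were queried yet. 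A further Chernoff bound shows that a constant fraction of the $v_{i,j}$ succeed, and summing the resulting disjoint connected sets gives $|W \cap M_z| \ge \Omega(t^{1/6}) \cdot \Omega(t^{(k-1)/6}) = \Omega(t^{k/6})$, connected in $G_p$ through $W_0$. The probability of all these events is a product of constants (in $k$), giving $c_1 = c_1(z,k,\epsilon,\delta) > 0$, and tracking the multiplicative constants yields $c_2 = c_2(z,k,\epsilon,\delta)>0$, completing the induction.

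The main obstacle will be making the two-step branching estimate \emph{uniform and valid inside the projections}: after projecting to $H_{i,j}$ one must verify that the bipartite structure between the relevant layers of $H_{i,j}$ still has the biregularity-up-to-lower-order-terms needed for the two-step offspring mean to exceed $1$, despite having deleted up to $2t^{1/2}$ coordinates, and that the constraint $\frac{1+\delta}{10\sqrt s p}\le z \le \frac{t}{\sqrt s}$ (which governs the relevant layer) is preserved. Concretely, one needs that for a vertex $u$ in the ``$M_z$ side'' of $H_{i,j}$, the count of length-two paths inside $H_{i,j}\cap\Gamma_z$ landing back on that side is still $(1-o_s(1))zs\cdot t$, which requires $z \ll t$ and $z s^{1/2} \gg$ (number of deleted coordinates); the hypotheses $z\le t/\sqrt s$ and the choice of exploration truncation at $t^{1/2}$ are exactly what make this go through, but checking it carefully — together with the bookkeeping that the explored set always stays inside a single pair of layers so that degrees are genuinely $zs$ and $t-z+1$ and not something smaller — is the delicate part. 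The remaining error-term management (that $c(\epsilon - o(1), \cdot, k-1)$ degrades only by a $(1-o(1))$ factor over the $\omega(1)$ inductive steps) is handled exactly as in Lemma \ref{t^k}.
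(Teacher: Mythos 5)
Your plan follows the paper's proof quite closely in its overall architecture: a two-step exploration in the bipartite subgraph $\Gamma_z$ coupled to a Galton--Watson tree for the base case, followed by a two-round exposure using the Projection Lemma and the induction hypothesis inside pairwise disjoint projections. That skeleton is exactly right. However, there is a genuine quantitative error in the induction step in your choice of sprinkling probability. You set $p_2 = t^{-\gamma}$ for $\gamma\in(1,3/2)$, importing the range wholesale from Lemma~\ref{t^k}, and claim the two-step sprinkling from $W_0$ back into $M_z$ finds $\Theta(t^{1/6})$ new vertices. What you have overlooked is that in Lemma~\ref{t^k} the sprinkling traverses a single edge (cost $p_2$), whereas here it must traverse two (out to $M_{z-1}$ and back to $M_z$, cost $p_2^2$). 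Starting from $\Theta(\sqrt t)$ vertices of $W_0\cap M_z$, each with $\Theta(zs)=\Theta(\sqrt s\, t)$ $\Gamma_z$-edges down to $M_{z-1}$, and each such vertex with $\Theta(t)$ $\Gamma_z$-edges back up, there are $\Theta(\sqrt s\, t^{5/2})$ length-two paths available, so the expected yield of the two-step sprinkling is $\Theta\bigl(\sqrt s\, t^{5/2-2\gamma}\bigr)$. This is $o(1)$ for every $\gamma\ge 5/4$, so the upper part of your range --- and in particular the `natural' value $\gamma=4/3$ from Lemma~\ref{t^k} --- fails outright; to get $t^{1/6}$ you need $\gamma\le 7/6$. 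The paper sidesteps this by taking $p_2 = \frac{1}{t\log t}$ (i.e.\ $\gamma = 1+o(1)$), which yields $\Theta(\sqrt t/\log^2 t)$ new $M_z$-vertices per induction step, comfortably more than $t^{1/6}$. Your argument is repairable by shrinking the range of $\gamma$, or simply adopting $p_2 = \frac{1}{t\log t}$.

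Two smaller remarks. First, your offspring-mean estimate drops a $\sqrt s$: from $z\ge \frac{1+\delta}{10\sqrt s\,p}$ one gets $zsp\ge \frac{(1+\delta)\sqrt s}{10}$, so the two-step mean is $\Theta(\sqrt s)$ and not merely `a constant factor above $1$'. This only helps you, but it is exactly this $\sqrt s$ margin that absorbs all the multiplicative losses from truncation and projection, so it should be carried explicitly. Second, your comment about error-term management over $\omega(1)$ inductive steps is over-engineered for this lemma: unlike Lemma~\ref{t^k}, here $k$ is a fixed constant (the paper eventually uses $k=15$), so there are only constantly many inductive steps and no delicate accumulation of $(1-o(1))$ factors to track.
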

Before proving the lemma, let us note that $p=\frac{1}{3t}$ and $z=\frac{t}{\sqrt{s}}$, for $s$ large enough, satisfy the requirements of the lemma.
\begin{proof}[Proof of Lemma \ref{two step t^k}]
Throughout the proof, we will use ideas similar to those of Lemma \ref{t^k}, while refining them to our particular setting. Let $M_{z-1}$ be the set of vertices with $z-1$ centre coordinates in $G$. Let $\Gamma\coloneqq \Gamma_{z}$ be the bipartite biregular induced subgraph of $G$ between the layers $M_{z}$ and $M_{z-1}$. Throughout the proof, we will in fact consider the percolated graph $\Gamma_p\subseteq G_p$.

We prove by induction on $k$, over all relevant values of $z$, $\epsilon$ and $\delta$.

For $k=1$, let us first describe the following variant of the BFS algorithm. Here, we maintain the following sets of vertices:
\begin{itemize}
    \item $Q$, the vertices are currently exploring, kept in a queue;
	\item $S_1 \cup S_2$, the vertices already processed in the exploration process; 
	\item $T_1 \cup T_2$, the unvisited vertices.
\end{itemize}
We begin with $Q=\{v\}$, $S_1, S_2=\emptyset$ and $T_1=M_z\setminus\{v\}$, $T_2=M_{z-1}$. At each iteration, we proceed as follows. We consider the first vertex $u\in Q$, and explore its neighbourhood in $T_2$ in $G_p$, that is, $N_{G_p}(u, T_2)$. For each vertex $x\in N_{G_p}(u, T_2)$ in turn, we do the following:
\begin{itemize}
    \item[-] We move $x$ to $S_2$;
    \item[-] We expose the neighbours of $x$ in $T_1$  in $G_p$, that is, $N_{G_p}(x,T_1)$;
    \item[-] We move the vertices  $y\in N_{G_p}(x,T_1)$ from $T_1$ to $Q$ one by one.
\end{itemize}
Finally, after exploring all relevant vertices $x\in N_{G_p}(u, T_2)$, we move $u$ from $Q$ to $S_1$. See Figure \ref{fig:indc step 2} for an illustration. Note that, by taking an edge from the vertex $u$ to each vertex $x \in N_{G_p}(u, T_2)$ and an edge from each vertex $x \in N_{G_p}(u, T_2)$ to each vertex $y\in N_{G_p}(x,T_1)$ discovered in this way, we can think of this process as building a tree $B_1 \subseteq G_p$ which spans $S_1 \cup S_2 \cup Q$ at each stage of the process. 

\begin{figure}[H]
\centering
\includegraphics[width=0.6\textwidth]{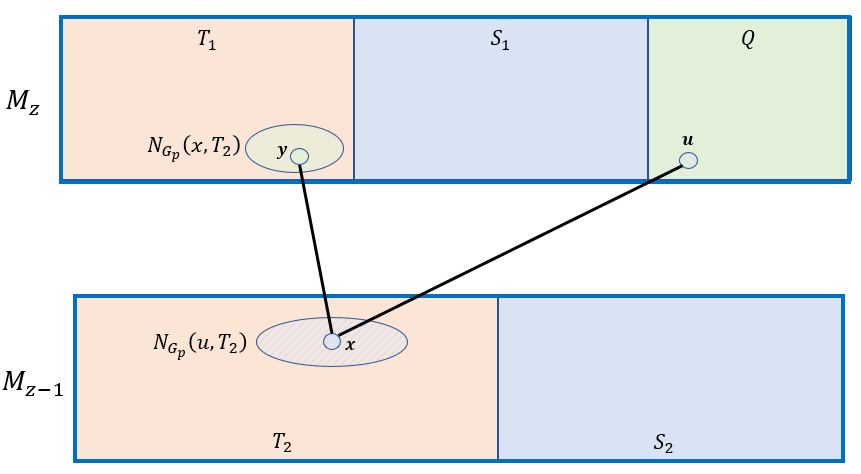}
\caption{The vertex $u\in Q$ and its exposed neighbourhood in $T_2$, and a vertex $x\in T_2$ and its exposed neighbourhood in $T_1$. Note that in the course of the exploration process, all the vertices in $N_{G_p}(x,T_2)$ move from $T_1$ to $Q$, the vertex $x$ (and, subsequently, all the vertices from $N_{G_p}(u,T_2)$) move to $S_2$, and, finally, the vertex $u$ moves to $S_1$.}
\label{fig:indc step 2}
\end{figure}

We run the algorithm until either $Q$ is empty, or $|S_1\cup S_2\cup Q|=\sqrt{t}$. Let us first note that at all times, $Q\cup S_1\cup T_1\subseteq M_z$, and $S_2\cup T_2\subseteq M_{z-1}$. 

Observe that, since at all times $|S_1\cup S_2\cup Q|\le \sqrt{t}$, we have that for every $u\in Q$, $d(u, T_2)\ge zs-\sqrt{t}\ge \frac{zs}{2}$ (since $z\ge \frac{1+\delta}{10\sqrt{s}p}\ge \frac{(1+\delta)t}{10\sqrt{s}}$), and for every $w\in T_2$, $d(w, T_1)\ge t-z+1-\sqrt{t}\ge \frac{2t}{3}$ (since $z\le \frac{t}{\sqrt{s}}$ and $s$ is large enough). We can thus couple the tree $B_1$ constructed in our truncated algorithm with a Galton-Watson tree $B_2$ rooted at $v$, such that for all $i$, the offspring distribution at the $2i$-th generation (where we start at generation $0$, with $v$) is $Bin\left(\frac{zs}{2},p\right)$, and at the $(2i+1)$-th generation is $Bin\left(\frac{2t}{3},p\right)$, such that $B_2$ is isomorphic to a subgraph of $B_1$ as long as $|B_1|\le \sqrt{t}$. 

Let us denote by $Z_i$ be the \textit{number} of vertices at depth $i$ in $B_2$ (where $Z_0=1$, and the vertex in the $0$-th depth is $v$). We then have by the above and by Lemma \ref{chernoff} that, for large enough $s$,
\begin{align*}
    \mathbb{P}\left[Z_{2(i+1)}\le \frac{zs^{\frac{2}{3}}x}{10t}\mid Z_{2i}=x\right]&\le \mathbb{P}\left[Z_{2i+1}\le \frac{zs^{\frac{2}{3}}x}{t}\mid Z_{2i}=x\right]+\mathbb{P}\left[Z_{2(i+1)}\le \frac{zs^{\frac{2}{3}}x}{10t}\mid Z_{2i+1}\ge\frac{zs^{\frac{2}{3}}x}{t}\right]\\
    &\le \mathbb{P}\left[Bin\left(\frac{zsx}{2},p\right)\le \frac{zs^{\frac{2}{3}}x}{t}\right]+\mathbb{P}\left[Bin\left(\frac{2zs^{\frac{2}{3}}x}{3},p\right)\le \frac{zs^{\frac{2}{3}}x}{10t}\right]\\
    &\le \exp\left(-2x\right)+ \exp\left(-\frac{3x}{16}\right)\le \exp\left(-\frac{x}{100}\right),
\end{align*}
where the penultimate inequality follows since $p\ge \frac{1-\epsilon}{3t}$ and $zsp\ge \frac{(1+\delta)\sqrt{s}}{10}$. Hence, we obtain:
\begin{align*}
   \mathbb{P}\left[\exists i \text{ such that } Z_{2i}\le \left(\frac{zs^{\frac{2}{3}}}{10t}\right)^i\right]&\le \sum_{i=1}^{\infty}\mathbb{P}\left[Z_{2i}\le \left(\frac{zs^{\frac{2}{3}}}{10t}\right)^i \mid Z_{2(2i-1)}\ge \left(\frac{zs^{\frac{2}{3}}}{10t}\right)^{i-1}\right]\\
   &\le \sum_{i=1}^{\infty}\exp\left(-\frac{1}{100}\cdot\left(\frac{zs^{\frac{2}{3}}}{10t}\right)^i\right)\le \exp\left(-\frac{1}{200}\right),
\end{align*}
where the last inequality follows since $p\le \frac{1}{t}$, and $z\ge \frac{(1+\delta)}{10\sqrt{s}p}\ge \frac{(1+\delta)t}{10\sqrt{s}}$, and we take $s$ to be large enough. In particular, $\frac{zs^{\frac{2}{3}}}{10t}>1$, and hence the Galton-Watson tree grows to infinity with probability at least $\exp\left(-\frac{1}{200}\right)$, and therefore with probability at least $\exp\left(-\frac{1}{200}\right)$ we have that $|B_2|=\sqrt{t}$ and hence $|S_1\cup S_2\cup Q|=\sqrt{t}$ at some point. We thus have that with the same probability, we have a connected set $W$ such that $v\in W$, $W$ is connected in $G_p$, and $|W|=\sqrt{t}$. Let us argue further that, in fact, $|W\cap \left(S_1\cup Q\right)|\ge\frac{\sqrt{t}}{22}$.
Consider any vertex $w\in S_2$. Crucially, observe that by the construction of the exploration process, unless $w$ is the last vertex we have moved such that now $|B_1|=\sqrt{t}$, after we move $w$ from $T_2$ to $S_2$ we expose the neighbourhood of $w$ in $T_1$ (see Figure \ref{fig:indc step 2}). If $|N_{G_p}(w, T_1)|\ge 1$, then we added at least one child of $w$ to $B_1$. The probability that $|N_{G_p}(w,T_1)|=0$ is at most 
\begin{align*}
    (1-p)^{\frac{2t}{3}-\sqrt{t}}\le \left(1-\frac{1-\epsilon}{3t}\right)^{\frac{t}{2}}\le \exp\left(-\frac{1-\epsilon}{6}\right)\le \frac{9}{10}.
\end{align*}
Hence the number of $w\in S_2$ which does not have at least one child, stochastically dominates $1+Bin\left(\sqrt{t},\frac{9}{10}\right)$ (where we use the fact that $|B_1|\le \sqrt{t}$ and our stochastic domination on the number of neighbours). Hence, by Lemma \ref{azuma}, the number of vertices $w\in T_2$ which do not have at least one child in $T_1$ in $B_1$ (that afterwards moves to $Q_1$ and perhaps subsequently to $S_1$) is \textbf{whp} at most $\frac{10\sqrt{t}}{11}$. Therefore, there are at most $\frac{10\sqrt{t}}{11}$ vertices in $S_2$ which do not have a child in $B_1$. Since $B_1$ is a tree, these children are distinct for distinct $w$, and they all lie in $S_1$ or $Q$. 
It follows that \textbf{whp} $|S_1\cup Q|\ge |S_2|-\frac{10\sqrt{t}}{11}$, and hence with probability at least $c>0$, we have that $|S_1\cup Q|\ge \frac{\sqrt{t}}{22}$, and hence $|W\cap M_{z}|\ge\frac{\sqrt{t}}{22}$, completing the case of $k=1$. 

We proceed in a manner somewhat similar to the proof of Lemma \ref{t^k}. Let $k\ge 2$ and assume the statement holds with $c_1(z', k-1,\epsilon',\delta')$ and $c_2(z', k-1,\epsilon',\delta')$ for all relevant values of $z'$, $\epsilon'$ and $\delta'$. We argue via a two-stage exposure, with $p_2=\frac{1}{t\log t}$ and $p_1=\frac{p-p_2}{1-p_2}$ so that $(1-p_1)(1-p_2)=1-p$. Note that $G_p$ has the same distribution as $G_{p_1}\cup G_{p_2}$, and that $p_1\ge(1-o_t(1))p\ge \frac{1-\epsilon_1}{3t}$ for some $\epsilon_1>0$ constant. Similarly, $z\ge \frac{1+\delta}{10\sqrt{s}p}\ge \frac{1+\delta_1}{10\sqrt{s}p_1}$, and we can thus begin in the same manner as $k=1$, and find with probability at least $c>0$, a set $W_0$ such that $v\in W_0$, $W_0$ is connected in $G_p$, $|W_0|=\sqrt{t}$
and $|W_0\cap M_z|\ge \frac{\sqrt{t}}{22}$. We further note that $W_0\subseteq V(\Gamma)$. We proceed under the assumption that indeed $|W_0|=\sqrt{t}$. Let us enumerate the vertices in $W_0\cap M_z$ as $\left\{v_1,\ldots,v_{\ell}\right\}$, where $\ell\ge\frac{\sqrt{t}}{22}$ and $\ell\le \sqrt{t}$ (since $|W_0|=\sqrt{t}$).

Using Lemma \ref{projection lemma}, we can find pairwise disjoint projections $H_1,\ldots, H_{\ell}$ of $G$, each having dimension at least $t-\sqrt{t}$, such that each $v_i\in W_0$ is in exactly one of $H_i$'s. 

We now intend to find pairwise disjoint projection of $G$, each with a vertex in $M_{z}$, to which we can apply our induction hypothesis. Note that, in an arbitrary projection $H'$ of $G$ with dimension $t'$, the set $M_z \cap V(H')$ will still be a layer of $H'$. However, for the vertices in this layer, the number of `active' coordinates in $H'$ which are centres of stars will be some number between $z$ and $z-(t-t')$. Our approach here is then almost identical to that of the proof of Lemma \ref{t^k}, with the slight difference that we want our vertices to be in $M_{z}$, and have $z'$ centre coordinates inside the projected subgraph, for some $z'$ such that $z'\ge \frac{1+\delta'}{10\sqrt{s}p_1}$ for some $\delta'>0$.

In order to obtain that, we begin by exposing some carefully chosen sets of edges with probability $p_2$. Observe that for each $v_i\in W_0\cap M_{z}$, we have that 
\[
|N_{H_i\cap \Gamma}(v_i)|\ge |N_\Gamma(v_i)|-s\cdot \sqrt{t}\ge\frac{zs}{3},
\]
where the first inequality follows since the dimension of $H_i$ is at least $t-\sqrt{t}$ and the maximum degree of the base graphs is $s$. We further note that $N_{H_i\cap \Gamma}(v_i)\subseteq M_{z-1}$. Let \[
W_1=\bigcup_{i\in\left[\sqrt{\ell}\right]}N_{H_i\cap \Gamma}(v_i)\subseteq M_{z-1}.
\]
Then, recalling that the $H_i$'s are pairwise disjoint, we have that
\begin{align*}
    |W_1|\ge \frac{\sqrt{t}}{25}\cdot\frac{zs}{3}\ge \frac{\sqrt{t}(1+\delta)\sqrt{s}}{750p}\ge t^{\frac{3}{2}},
\end{align*}
where we used our assumption that $z\ge \frac{1+\delta}{10\sqrt{s}p}$ and that $s$ is large enough.

We now expose the edges between $W_0$ and $W_1$ in $G_{p_2}$. Let us denote the set of vertices in $W_1$ that are connected with $W_0$ in $G_{p_2}$ by $W_1'$. Then, $|W_1'|$ stochastically dominates $Bin\left(t^{\frac{3}{2}},p_2\right)$.
Thus, by Lemma \ref{chernoff}, we have that \textbf{whp} $|W_1'|\ge \frac{\sqrt{t}}{10\log t}$. We assume in what follows that this property holds.

We now expose the edges in $G_{p_2}$ between $W_1'$ and $M_{z}\setminus W_0$, recalling that by our assumption $|W_0|=\sqrt{t}$.
Note that for every $w_i\in W_1'$, we have that 
$$|N_{H_i\cap\Gamma}(w_i) \cap (M_{z}\setminus W_0)|\ge |N_\Gamma(w_1)|-2s\cdot\sqrt{t}\ge \frac{t}{3},$$
and $N_{H_i\cap \Gamma}(w_i) \cap (M_{z}\setminus W_0)
\subseteq M_{z}$. Let 
\[
W_2=\bigcup_{i\in\left[\ell\right]} N_{H_i\cap\Gamma}(w_i)\cap (M_{z}\setminus W_0)\subseteq M_{z},
\]
where we note that by the projection Lemma (lemma \ref{projection lemma}) and our construction, these neighbourhoods are disjoint. Then \textbf{whp} $|W_2|\ge \frac{t}{3}\cdot \frac{\sqrt{t}}{10\log t}.$

Let $W_2'\subseteq M_{z}$ be the set of vertices in $W_2$ that are connected with $W_1'$ in $G_{p_2}$. Then, $|W_2'|$ stochastically dominates $Bin\left(\frac{t^{\frac{3}{2}}}{10\log t},p_2\right).$
Thus, by Lemma \ref{chernoff}, we have that \textbf{whp} $|W_2'|\ge \frac{\sqrt{t}}{20\log^2t}$ and $|W_2'|\le \sqrt{t}$. We assume henceforth that these two properties hold. 

The subsequent part of the proof is almost identical to that of the proof of Lemma \ref{t^k}, where here we only need to run the inductive step constantly many times (as opposed to $\omega(1)$ times), allowing us to analyse it in a more straightforward manner. 

Indeed, let $W_{2,i}'=W_2'\cap V(H_i)\subseteq M_{z}$. We thus have now our set of vertices in $M_{z}$. Now, for each $i$, we apply once again Lemma \ref{projection lemma} to find a family of $|W_{2,i}'|\coloneqq \ell_i\le \sqrt{t}$ pairwise disjoint projections of $H_i$, which we denote by $H_{i,1},\ldots, H_{i,\ell_i}$, such that every vertex of $W_{2,i}'\subseteq M_{z}$ is in exactly one of the $H_{i,j}$, and each of the $H_{i,j}$ is of dimension at least $t-2\sqrt{t}$. We have that for all $i,j$, the number of centre coordinates of $v_{i,j}$ in $H_{i,j}$, denote it by $z_{i,j}$, is at least $z-2s\sqrt{t}$. Recall that $p_1\ge \frac{1-\epsilon_1}{3t}$, and note that 
\begin{align*}
    z_{i,j} \ge z-2s\sqrt{t}\ge \frac{1+\delta}{10\sqrt{s}p}-2s\sqrt{t} \ge \frac{1+\delta_2}{10\sqrt{s}p} \ge \frac{1+\delta_3}{10\sqrt{s}p_1}
\end{align*}
for some $\delta_2,\delta_3>0$, where we used $p\le \frac{1}{t}$ and $p_1=(1-o(1))p$. Altogether, the $v_{i,j}$ (that is, their corresponding $z_{i,j}$) together with $p_1$ satisfy the conditions of the induction hypothesis for some $\epsilon_{i,j},\delta_{i,j}>0$.

As in the proof of Lemma \ref{t^k}, note that when we ran the algorithm on $G_{p_1}$, we did not query any of the edges inside any of the $H_{i,j}$. By the above inequality we can apply the induction hypothesis to $v_{i,j}$ in $G_{p_1}\cap H_{i,j}$ and conclude that with probability at least $c_1(z_{i,j},k-1,\epsilon_{i,j},\delta_{i,j})$,
there exists a connected set $W_{i,j}$ such that $v_{i,j}$ is $W_{i,j}$, $|W_{i,j}\cap M_{z}|\ge c_2(z_{i,j},k-1,\epsilon_{i,j},\delta_{i,j})t^{\frac{k-1}{6}}$. Let $c'_1=\min_{i,j}c_1(z_{i,j},k-1,\epsilon_{i,j},\delta_{i,j})$ and $c'_2=\min_{i,j}c_2(z_{i,j},k-1,\epsilon_{i,j},\delta_{i,j})$. Noting that the above described events are independent for every $H_{i,j}$, we obtain by Lemma \ref{chernoff} that \textbf{whp}, at least $\frac{c'_1\sqrt{t}}{40\log^2t}$ of the $H_{i,j}$ have that $|W_{i,j}\cap M_{z}|\ge c'_2t^{\frac{k-1}{6}}$. Thus, with probability at least $c'_1+o(1)\coloneqq c_1$, we have found a connected set $W$ such that $v$ is in $W$, and there is some $c_2>0$
such that
\begin{align*}
    |W\cap M_{z}|\ge \frac{c_1'\sqrt{t}}{40\log^2t}\cdot c'_2t^{\frac{k-1}{6}}> c_2t^{\frac{k}{6}},
\end{align*}
completing the induction step.
\end{proof}

We now turn to show that \textbf{whp} many of the vertices with $z\coloneqq \frac{t}{\sqrt{s}}$ centre coordinates are contained in big connected sets in $G_p$.

\begin{lemma}\label{high degree in big comp finalfinalfinal}
Let $s$ be a large enough integer. Let $p\ge \frac{1}{3t}$ and let $k>0$ be an integer. Let $z\coloneqq \frac{t}{\sqrt{s}}$. Let $M_{z}$ be the set of vertices with $z$ centre coordinates in $G$ and let $W\subseteq M_{z}$ be the set of vertices in $M_{z}$ that belong to components with at least $t^k$ vertices in $M_{z}$ in $G_p$. Then, there exists a constant $\rho>0$ such that \textbf{whp}, 
\begin{align*}
    |W|\ge \rho|M_{z}|\ge |G|^{1-s^{-\frac{1}{5}}}.
\end{align*}
\end{lemma}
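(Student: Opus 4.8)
The plan is a first-moment computation followed by a concentration argument. By monotonicity of percolation we may and do assume $\frac{1}{3t} \le p \le \frac{1}{t}$; passing to $p = \frac{1}{t}$ only decreases component sizes and hence only decreases $|W|$, so a lower bound in this range suffices. Fix $v \in M_{z}$. We apply Lemma \ref{two step t^k} with $z = \frac{t}{\sqrt s}$ and with its parameter $k$ replaced by $k' \coloneqq 6(k+1)$: the hypothesis $\frac{1+\delta}{10\sqrt s\, p} \le z$ holds for $\delta$ small and $s$ large since $p \ge \frac{1}{3t}$, and since $k$ and $s$ are fixed, the constant number of recursion steps in that lemma produces positive constants $c_1, c_2$ that do not depend on $t$. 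The lemma then yields, with probability at least $c_1$, a connected subset of $G_p$ containing $v$ and at least $c_2 t^{k'/6} = c_2 t^{k+1} \ge t^k$ vertices of $M_{z}$ (the last inequality holding once $t \ge 1/c_2$); in particular $|C_v(G_p) \cap M_{z}| \ge t^k$, i.e.\ $v \in W$. Summing over $v \in M_z$ gives $\mathbb{E}|W| \ge c_1 |M_{z}|$.

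Next I would apply the Azuma--Hoeffding inequality (Lemma \ref{azuma}) to $|W|$, viewed as a function of the independent indicator variables of the edges of $G$. The crucial input is that toggling a single edge changes $|W|$ by at most $2t^k$: such a toggle either merges two components of $G_p$ or splits one, and $|W|$ is affected only when a component crosses the threshold of $t^k$ vertices of $M_{z}$; in every such case — a component above the threshold absorbing one below it, or two components below the threshold merging into one above it, together with the corresponding reverse moves — the number of $M_z$-vertices whose membership in $W$ changes is strictly less than $2t^k$, since a component that drops below the threshold upon deleting an edge splits into two pieces each having fewer than $t^k$ vertices of $M_z$. With $e(G) = \Theta(t|G|)$ edges and difference bound $2t^k$, Lemma \ref{azuma} gives
\[
\mathbb{P}\!\left[\,|W| \le \tfrac{c_1}{2}|M_{z}|\,\right] \le \mathbb{P}\!\left[\,|W| \le \mathbb{E}|W| - \tfrac{c_1}{2}|M_{z}|\,\right] \le 2\exp\!\left(-\Omega\!\left(\frac{|M_{z}|^2}{t^{2k+1}|G|}\right)\right).
\]
Using $|M_{z}| = \binom{t}{z}s^{t-z} \ge s^{t-z}$ with $z = \frac{t}{\sqrt s}$ and $|G| = (s+1)^t$, one computes $\frac{|M_{z}|^2}{|G|} \ge s^{t/2}$ for $s$ large, which dominates the polynomial factor $t^{2k+1}$; hence the right-hand side is $o(1)$ and \textbf{whp} $|W| \ge \rho|M_{z}|$ with $\rho \coloneqq \frac{c_1}{2}$.

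Finally, it remains to verify the deterministic inequality $\rho|M_{z}| \ge |G|^{1-s^{-1/5}}$. With $|M_{z}| \ge s^{t-z}$, $z = \frac{t}{\sqrt s}$ and $|G| = (s+1)^t$, taking logarithms and dividing by $t$ reduces this to $(1-s^{-1/2})\ln s \ge (1-s^{-1/5})\ln(s+1) - o(1)$, and the left side exceeds the right because $(s^{-1/5}-s^{-1/2})\ln s$ is much larger than $(1-s^{-1/5})\ln(1+1/s) = O(1/s)$ for $s$ large. The only genuinely delicate point in the argument is the bounded-difference estimate for $|W|$ under an edge toggle — in particular, seeing why a large component splitting upon deletion of an edge cannot change $|W|$ by more than $2t^k$; everything else is routine first-moment/concentration bookkeeping together with the elementary estimates comparing $|M_{z}|$ and $|G|$.
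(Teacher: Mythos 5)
Your proof is correct and follows essentially the same route as the paper's: apply Lemma \ref{two step t^k} with a suitably inflated parameter to lower-bound $\mathbb{E}|W|$, then concentrate via Azuma--Hoeffding on the edge-exposure martingale with the observation that an edge toggle moves $|W|$ by less than $2t^k$. The only substantive addition is your explicit monotonicity reduction to $p \le \tfrac{1}{t}$ (which makes the invocation of Lemma \ref{two step t^k} legitimate for all $p \ge \tfrac{1}{3t}$, a point the paper glosses over) and your spelled-out justification of the bounded-difference estimate, both of which are welcome tightenings of the same argument.
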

\begin{proof}

Observe that in order to choose a vertex in $M_{z}$ we have $\binom{t}{z}$ choices for the $z$-set of coordinates which are the centre of a star, and then we have $s^{t-z}$ choices for the leaf coordinates. It follows that
\begin{align*}
    |M_{z}|= \binom{t}{z}s^{t-z}\ge \left(\sqrt{s}\right)^{\frac{t}{\sqrt{s}}}s^{\left(1-\frac{1}{\sqrt{s}}\right)t} \ge (s+1)^{\left(1-s^{-\frac{1}{3}}\right)t} = |G|^{1-s^{-\frac{1}{3}}},
\end{align*} 
where the penultimate inequality holds for any $s$ large enough. 

By Lemma \ref{two step t^k}, there is some constant $c' >0$ such that every $v\in M_{z}$ belongs to a connected set of order $t^k$ with probability at least $c'>0$. We thus have that
\[
\mathbb{E}[|W|]\ge c'|M_{z}|\ge c'|G|^{1-s^{-\frac{1}{3}}}.
\]

Consider the edge-exposure martingale on $G_p$. It has length $|E(G)| = \frac{st}{s+1} |G| \leq 2t |G|$, and the addition or deletion of an edge can change the value of $|W|$ by at most $2t^k$. Therefore, by Lemma \ref{azuma}, we have for $\rho=\frac{c'}{2}>0$ that
\begin{align*}
    \mathbb{P}\left[|W|\le \rho|M_{z}|\right]&\le \mathbb{P}\left[|W|\le \frac{\mathbb{E}|W|}{2}\right]
    \le 2\exp\left(-\frac{|G|^{2-2s^{-\frac{1}{3}}}}{16|G|t^{2k+1}} \right)
    \le 2\exp\left(-|G|^{1-s^{-\frac{1}{4}}}\right)=o(1),
\end{align*}
as long as $s$ is large enough. It follows that \textbf{whp}
\[
|W|\ge \rho|M_{z}|\ge \frac{c'}{2}|G|^{1-s^{-\frac{1}{4}}} \geq |G|^{1-s^{-\frac{1}{5}}}.  
\]
\end{proof}

We note that while $M_z$, where $z=\frac{t}{\sqrt{s}}$, takes a polynomial fraction of the vertices, this set of vertices still only amounts to $o(|G|)$. Thus, we cannot use global isoperimetric properties of $G$ to find typically sufficiently many short paths between relevant (respecting) partitions of $M_z$, as we do in the proof of Theorem \ref{isoperimetric}. Instead, we need to look at \textit{local} isoperimetric properties of $M_z$, such as the isoperimetric property of its two-neighbourhood in $G$, in order to show that \textbf{whp} many of the large connected sets in $G_p\cap M_z$ in fact belong to the same connected component in $G_p$.

\begin{proof}[Proof of Theorem \ref{many stars}] 
Assume that all the conditions in Theorem \ref{many stars} hold.

Once again, we argue via a two-round exposure. Let $\epsilon$ be a small enough constant, such that $p_2=\frac{\epsilon}{3t}$, $p_1=\frac{1}{3t}$ and $(1-p_1)(1-p_2)=1-p$, noting that $G_{p_1} \cup G_{p_2}$ has the same distribution as $G_p$.

Let $z\coloneqq \frac{t}{\sqrt{s}}$. Let $M_{z}$ be the set of vertices with $z$ centre coordinates in $G$, and let $W\subseteq M_{z}$ be the set of vertices in $M_{z}$ belonging to a component with at least $t^{15}$ vertices in $M_{z}$ in $G_{p_1}$. Then, by Lemma \ref{high degree in big comp finalfinalfinal} with $k=15$, we have that there exists a constant $\rho>0$ such that \textbf{whp}, 
\begin{align}\label{before_sprinkle}	
  |W|\ge \rho|M_{z}|\ge |G|^{1-s^{-\frac{1}{5}}}. 
\end{align}

We note further that, since the average degree $t = \Theta(\log |G|)$, by a similar argument as in Lemma \ref{density lemma} applied to the vertices with $z$ centre coordinates in $G$, we claim that \textbf{whp} 
\begin{align}\label{distance2}	
    \text{every vertex in $M_{z}$ is at distance at most 2 from $W$.}
\end{align}
To see why, observe that given a vertex $v\in M_{z}$, we can form $t^{\frac{4}{3}}$ pairwise disjoint projections of $G$, each isomorphic to the product of $(1-o(1))t$ copies of $S(1,s)$ and each containing a vertex with $z$ centre coordinates at distance $2$ from $v$. Indeed, since the number $z$ of coordinates of $v$ which are the centre of star satisfies $t^{\frac{2}{3}} \leq z \leq t-t^{\frac{2}{3}}$, we can assume without loss of generality that the first $t^{\frac{2}{3}}$ coordinates of $v$ are the centres of stars, and the second $t^{\frac{2}{3}}$ coordinates are leaves. 

Given a pair of coordinates $1 \leq i \leq t^{\frac{2}{3}} < j \leq 2t^{\frac{2}{3}}$, let $H_{i,j}$ be the projection of $G$ to the first $2t^{\frac{2}{3}}$ coordinates where the first $2t^{\frac{2}{3}}$ coordinates agree with $v$, except that the $i$-th coordinate is some arbitrary leaf and the $j$-th coordinate is the centre of the star. 

Then, the graphs $H_{i,j}$ form a collection of $t^{\frac{4}{3}}$ pairwise disjoint projections of $G$, each isomorphic to the product of $(1-o(1))t$ copies of $S(1,s)$ and each containing a vertex with $z$ centre coordinates in $G$ at distance $2$ from $v$. 

Since $p$ will still be `supercritical' at these vertices in each $H_{i,j}$, an argument as in Lemma \ref{density lemma} will imply that the expected number of vertices in $M_{z}$ which are not at distance at most 2 from a vertex in $W$ is at most $\exp\left( - \Omega\left(t^{\frac{4}{3}} \right)\right)|G| = o(1)$, and so the claim  follows by Markov's inequality. We assume in the following that claim \eqref{distance2} holds.

Let $A\cup B$ be a $G_{p_1}$-respecting partition of $W$ into two parts, $A$ and $B$, such that $|A|\le |B|$ and $|A|\ge \frac{|W|}{3}$. Let $A'$ be the set of vertices in $A$ together with all vertices in $M_{z}\setminus B$ that are within distance $2$ to $A$, and let $B'$ be the set of vertices in $B$ together with all vertices in $M_{z}\setminus A'$ that are within distance $2$ to $B$, so that $M_{z} = A' \cup B'$. We require the following claim, whose proof we defer to the end of this proof:
\begin{claim} \label{star iso finalfinalfinal}
\textbf{Whp}, there are at least $\frac{|W|}{t^{2}}$ paths of length $2$ in $G$ between $A'$ and $B'$. 
\end{claim}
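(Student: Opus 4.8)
The plan is to reduce Claim \ref{star iso finalfinalfinal} to a deterministic edge‑isoperimetric estimate on a fixed auxiliary graph on $M_z$, and then to prove that estimate by combining the Hamming‑graph inequality \eqref{e:Hammingiso} with the Johnson‑graph inequality of Theorem \ref{j iso}. Throughout we assume the \textbf{whp} events \eqref{before_sprinkle} and \eqref{distance2}.

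\emph{Reduction.} Identify $V(S(1,s))$ with $\{0,1,\ldots,s\}$, with $0$ the centre, so that $V(G)=\{0,\ldots,s\}^t$ and $M_z$ is the set of words with exactly $z$ zeros. Since altering a single coordinate of a word in $M_z$ changes its number of zeros, $M_z$ is independent in $G$, so two distinct vertices of $M_z$ lie at distance $2$ in $G$ exactly when they have a common $G$‑neighbour. A short case check shows that distinct $u,v\in M_z$ have a common neighbour precisely when either (i) $u,v$ agree outside one coordinate, where they take distinct nonzero values, or (ii) $u,v$ agree outside two coordinates $i,j$, where one of $u_i,v_i$ is $0$ and one of $u_j,v_j$ is $0$ (a ``swap'', so that the number of zeros is preserved). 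Let $\mathcal H$ be the graph on $M_z$ with exactly these adjacencies; each edge of $\mathcal H$ is joined by a path of length $2$ in $G$, and distinct $\mathcal H$‑edges give distinct such paths, so it suffices to show $e_{\mathcal H}(A',B')\ge |W|/t^2$. By \eqref{distance2}, $A'\cup B'=M_z$, and since $A\subseteq A'$ and $B\subseteq B'$ we have $|A'|,|B'|\ge |W|/3$; hence, after possibly swapping $A'$ and $B'$, it is enough to prove that every $S\subseteq M_z$ with $|W|/3\le |S|\le |M_z|/2$ satisfies $e_{\mathcal H}(S,M_z\setminus S)\ge |W|/t^2$.

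\emph{Structure of $\mathcal H$ and the isoperimetric estimate.} Write a vertex of $M_z$ as $(I,f)$ with $I\in\binom{[t]}{z}$ its set of zero‑coordinates (its \emph{fibre}) and $f\colon [t]\setminus I\to[s]$ recording the leaves elsewhere, and let $\pi(I,f)=I$. Type‑(i) edges lie within a single fibre and make $\pi^{-1}(I)$ a copy of the Hamming graph $H(t-z,s)$; type‑(ii) edges occur only between fibres $I,I'$ with $|I\triangle I'|=2$, that is, between vertices of the Johnson graph $J(t,z)$, and across such a Johnson edge they form an $s$‑regular bipartite graph on $\pi^{-1}(I)\cup\pi^{-1}(I')$ (so $s^{t-z+1}$ edges in total). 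Given $S$, set $\alpha_I=|S\cap\pi^{-1}(I)|/s^{t-z}$ and split the fibres into $\mathcal I_+=\{\alpha_I\ge 2/3\}$, $\mathcal I_-=\{\alpha_I\le 1/3\}$ and $\mathcal I_0$ (the rest). If at least half of $|S|$ lies in $\mathcal I_0\cup\mathcal I_-$, then applying \eqref{e:Hammingiso} inside each such fibre (where $i(H(t-z,s))\ge s/2$ and $S$ occupies a fraction of at most $2/3$) already gives $e_{\mathcal H}(S,M_z\setminus S)=\Omega(s|S|)$. Otherwise more than half of $|S|$ lies in $\mathcal I_+$; since $|S|\le |M_z|/2$ and each fibre of $\mathcal I_+$ is at least $2/3$ full, $|\mathcal I_+|\le\tfrac34\binom{t}{z}$, so Theorem \ref{j iso} applied to $\mathcal I_+\subseteq V(J(t,z))$ (with $1-\alpha\ge 1/4$) yields $\Omega\bigl(\sqrt{t/(z(t-z))}\,|\mathcal I_+|\bigr)$ fibres in the vertex‑boundary of $\mathcal I_+$ in $J(t,z)$. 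Each boundary fibre $I'$ is either in $\mathcal I_-$ — in which case the $s$‑regular bipartite graph between $I'$ and one of its $\mathcal I_+$‑neighbours supplies at least $\tfrac13 s^{t-z+1}$ edges of $\mathcal H$ between $S$ and its complement — or in $\mathcal I_0$ — in which case, by \eqref{e:Hammingiso} applied inside $I'$, the Hamming graph on $\pi^{-1}(I')$ supplies at least $\tfrac16 s^{t-z+1}$ such edges; summing over the (distinct) boundary fibres gives $e_{\mathcal H}(S,M_z\setminus S)=\Omega\bigl(s^{t-z+1}\sqrt{t/(z(t-z))}\,|\mathcal I_+|\bigr)=\Omega\bigl(s\sqrt{t/(z(t-z))}\,|S|\bigr)$, using $|\mathcal I_+|s^{t-z}=\Omega(|S|)$. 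Since $z=t/\sqrt s$ gives $\sqrt{t/(z(t-z))}=\Omega(s^{1/4}/\sqrt t)$, in both regimes $e_{\mathcal H}(S,M_z\setminus S)=\Omega(s^{5/4}|S|/\sqrt t)\gg |W|/t^2$ as $|S|\ge |W|/3$.

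The main obstacle I expect is organising the second regime: one must check that the Johnson boundary of the dense fibres is genuinely productive, splitting it into fibres that are actually $S$‑sparse (where the $s$‑regular bipartite cross‑structure forces many $S$–$S^c$ edges) and fibres that are only ``intermediate'' (where one instead harvests the internal Hamming cross‑edges), and one must keep the factor $1-\alpha$ in Theorem \ref{j iso} bounded away from $0$, which is precisely where the constraint $|S|\le |M_z|/2$ enters. The remaining ingredients — the case analysis pinning down the distance‑$2$ relation on $M_z$, and the edge count across a single Johnson edge — are routine but should be carried out with some care.
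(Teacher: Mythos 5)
Your proof is correct and takes essentially the same approach as the paper's: both describe $G^2[M_z]$ as Johnson-over-Hamming (with fibres $\cong H(t-z,s)$ indexed by $V(J(t,z))$ and cross-edges between Johnson-adjacent fibres), both classify fibres by how much of one side of the cut they contain, and both split into a ``many balanced fibres'' case handled by the Hamming inequality \eqref{e:Hammingiso} and a ``most fibres extreme'' case handled by Theorem \ref{j iso}. The differences are minor and cosmetic: you first reduce to a clean deterministic statement about arbitrary $S$ with $|W|/3 \le |S| \le |M_z|/2$ and use fixed thresholds $1/3, 2/3$ rather than the paper's $\rho$-dependent $\alpha,\beta$; and in the second case you apply the Johnson inequality directly to the dense fibres $\mathcal{I}_+$ and harvest the boundary fibres (splitting them into $\mathcal{I}_0$ vs.\ $\mathcal{I}_-$), whereas the paper first shows both colour classes are $\Theta(|J|)$ and counts colour-$1$–colour-$2$ edges. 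You also correctly observe that the cross-structure between Johnson-adjacent fibres is an $s$-regular bipartite graph rather than the ``matching'' the paper describes — this is a small inaccuracy in the paper's exposition, harmless there since it is only used for a lower bound, but your description is the precise one.
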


We can extend these paths to a family of $A-B$ paths of length at most $6$ (see Figure \ref{fig:extension}) and then, very crudely, since $\Delta(G)=st$, we can greedily thin this family to a set of $\frac{|W|}{36s^6t^{8}}$ edge-disjoint $A-B$ paths of length at most $6$. 

We can now apply Lemma \ref{l:sprinkling} with $H=G_{p_1}$, $c=\frac{1}{3}$, $X=W$, $p=p_2=\frac{\epsilon}{3t}=O(t^{-1})$, $k=t^{15}$, $r=36s^6t^{8}$ and $m=6$. Indeed, by our assumptions on $G_{p_1}$ and by Lemma \ref{two step t^k}, we have that \textbf{whp} both conditions in Lemma $\ref{l:sprinkling}$ hold, and
\begin{align*}
rp^{-m} &=36s^6t^{8}p_2^{-6}
=O\left(t^{14}\right)
= o\left(t^{15}\right) = o(k).
\end{align*} 
As a consequence of Lemma \ref{l:sprinkling}, we have that \textbf{whp} $G_p=G_{p_1}\cup G_{p_2}$ contains a component which contains at least $ \frac{2}{3}|W|$ vertices of $W$. Combining this with the bound $|W|\ge|G|^{1-s^{-\frac{1}{5}}}$ from \eqref{before_sprinkle}, we have that \textbf{whp} the largest component of $G_p$ is of order at least $\frac{2}{3}|G|^{1-s^{-\frac{1}{5}}}\ge |G|^{1-s^{-\frac{1}{6}}}$. 
\end{proof}
\begin{figure}[H]
\centering
\includegraphics[width=0.5\textwidth]{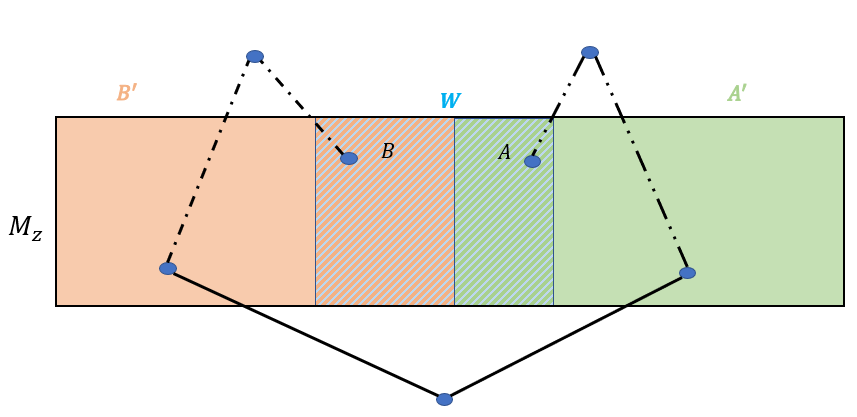}
\caption{An extension of a partition $A\cup B$ of $W$ to a partition $A'\cup B'$ of $M_{z}$. Each $A'-B'$ two-path (solid) can be extended into an $A-B$ six-path (solid and dashed). Note that some vertices in these paths may lie outside of $M_{z}$.}
\label{fig:extension}
\end{figure}

We finish this section with the proof of the above claim.
\begin{proof}[Proof of Claim \ref{star iso finalfinalfinal}]

Let $z=\frac{t}{\sqrt{s}}$ be the number of coordinates of $v$ which correspond to a centre of a star. Recall that, by Lemma \ref{high degree in big comp finalfinalfinal}, there is a constant $\rho>0$ such that \textbf{whp} $|W|\ge \rho |M_{z}|$. The following two constants will be useful throughout the proof:
\begin{align*}
    \alpha\coloneqq \frac{\rho}{4}, \quad \beta\coloneqq \frac{\rho}{3(4-\rho)}.
\end{align*}
Note that $\beta+(1-\beta)\alpha=\frac{\rho}{3(4-\rho)}+\frac{\rho}{4}\left(1-\frac{\rho}{3(4-\rho)}\right)=\frac{\rho}{3}$.

We are interested in the structure of the graph $G^2[M_{z}]$. Suppose that $v,w\in M_{z}$, and denote by $I,K\in [t]^{z}$ (respectively) the set of coordinates corresponding to centres of stars. Then $v$ and $w$ are at distance $2$ in $G$ if either 
\begin{enumerate}
    \item[(T1)] \label{i:same3}\quad $I=K$ and they disagree on a single coordinate of a leaf outside $I$; or 
    \item[(T2)] \label{i:symdiff3} \quad $|I\triangle K|=2$ and they agree on all the (leaf) coordinates outside $I\cup K$.
\end{enumerate}

Hence, we can see that $G^2[M_{z}]$ can be built in the following manner (see also Figure \ref{fig:sub1}) --- we start with the Johnson graph $J\coloneqq J(t,{z})$, which represents the ${z}$-set of coordinates which are centres of stars, and we replace each vertex with a copy of $H\coloneqq H(t-{z},s)$, which represents the set of leaf coordinates. There is a natural map $f: M_{z} \to V(J) \times V(H)$ which maps a vertex $v \in M_{z}$ to the pair $(I,x)$ where $I$ is the ${z}$-set of coordinates of $v$ which are centres of stars and $x$ is the restriction of $v$ to the coordinates which are leaves. 

The edges of $G^2[M_{z}]$ then come in two types. Those of type (T1) give rise to a copy of $H$ on $I \times V(H)$ for each $I \in V(J)$, whereas those of type (T2) give rise to a matching between $I \times V(H)$ and $K \times V(H)$ for each $(I,K) \in E(J)$, where a vertex $(I,x)$ is matched to a vertex $(K,y)$ if and only if $f^{-1}(I,x)$ and $f^{-1}(K,y)$ agree on all coordinates outside $I \cup J$. Note that this is similar to, but not quite the same as, the Cartesian product of $J$ and $H$.

We use the partition $M_{z}=A'\cup B'$ to define a colouring $\chi : V(J) \to [3]$ in the following manner. We say that a copy of $H$ given by $I \times V(H)$ is \textit{evenly-split} if
\[
\alpha |H| \leq |f^{-1}(I \times V(H)) \cap A'| \leq  \left(1-\alpha\right)|H|.
\]
Note that, since $M_{z}=A'\cup B'$ is a partition, if $I \times V(H)$ is evenly-split then it also satisfies 
\[ 
\alpha |H|\le |f^{-1}\left(I\times V(H)\right)\cap B'|\le \left(1-\alpha\right)|H|.
\]
If $I \times V(H)$ is not evenly-split then we say it is either \emph{$A'$-dominated} or \emph{$B'$-dominated} if
\[
|f^{-1}(I \times V(H)) \cap A'| \geq  \left(1-\alpha\right)|H| \qquad \text{ or } \qquad |f^{-1}(I \times V(H)) \cap B'| \geq  \left(1-\alpha\right)|H|,
\]
respectively. We then let $\chi(I)= 1$ if $I \times V(H)$ is $A'$-dominated, $\chi(I)= 2$ if $I \times V(H)$ is $B'$-dominated, and $\chi(I)= 3$ if $I \times V(H)$ is evenly-split (see Figure \ref{fig:sub1}). 

Let us suppose first that $|\chi^{-1}(3)| \geq \frac{|J|}{\rho t^{2}}$. In this case, at least $\frac{|J|}{\rho t^{2}}$ of the copies of $H$ are evenly-split. As mentioned in Section \ref{sec prelim_lemmas}, since $H(t-{z},s)$ is the product of $t-{z}$ copies of the complete graph $K_s$, using \eqref{e:Hammingiso}, we have that
\[
i(H(t-{z},s)) \geq \frac{1}{2} i(K_s) = \frac{s}{2}.
\]
In particular, each copy of $H$ which is evenly-split must contain at least $\frac{\alpha s |H|}{2}$ many edges from $A'$ to $B'$, and so it follows that there are at least
\[
\frac{|J|}{\rho t^{2}}\cdot\frac{\alpha s |H|}{2}  \geq \frac{\rho|M_{z}|}{t^{2}} \geq \frac{|W|}{t^{2}}
\]
paths of length 2 between $A'$ and $B'$.

So, let us assume that $|\chi^{-1}(3)| < \frac{|J|}{\rho t^{2}}$, and so the vast majority of copies of $H$ are either $A'$-dominated or $B'$-dominated. Furthermore, since 
\[
|A| \geq \frac{|W|}{3} \geq \frac{\rho|M_{z}|}{3}= \frac{\rho}{3} |J|\cdot|H|,
\]
it follows that $|\chi^{-1}(1)|,|\chi^{-1}(2)| \le \left(1-\beta\right)|J|$. Indeed, if $|\chi^{-1}(1)|>(1-\beta)|J|$, then we would have that
\begin{align*}
    |B| &\le |B'| < \beta|J|\cdot |H|+ \left(1-\beta\right)\alpha|J|\cdot |H|\\
    &=\frac{\rho}{3}|J|\cdot |H| = \frac{\rho |M_{z}|}{3}\le |A|,
\end{align*}
that is, $|B|<|A|$, contradicting our assumption that $|B|\ge |A|$. Similarly, if $|\chi^{-1}(2)|>(1-\beta)|J|$, we would have that $|A|<|A|$, a contradiction.

Hence, we may restrict our attention to the case where $|\chi^{-1}(1)|, |\chi^{-1}(2)|\le (1-\beta)|J|$ and $|\chi^{-1}(3)|<\frac{|J|}{\rho t^2}$. Since, by definition, $|\chi^{-1}(1)|+|\chi^{-1}(2)|+|\chi^{-1}(3)|=|J|$, we have that
\begin{align*}
    |\chi^{-1}(1)|,|\chi^{-1}(2)|\ge |J|-(1-\beta)|J|-\frac{|J|}{\rho t^2}\ge \frac{\beta}{2}|J|.
\end{align*}
Hence, by Theorem \ref{j iso}, there is a constant $c>0$ such that there are $c\sqrt{\frac{t}{z(t-z)}}\left(1-\frac{\beta}{2}\right)\frac{\beta|J|}{2}\ge \frac{|J|}{t}$ edges of $J$ which go between a vertex with colour $1$ and a vertex with colour $2$. Each of these edges represents an $A'$-dominated copy of $H$ (which we call the first copy of $H$) which is adjacent in $J$ to a $B'$-dominated copy of $H$ (which we call the second copy of $H$). Since at most $\alpha|H|$ of the vertices in the first copy are in $B'$ and at most $\alpha|H|$ of the vertices in the second copy are in $A'$, it follows that the number of vertices in the first copy which are in $A'$ and are matched to vertices in $B'$ in the second copy is at least
\[
(1-2\alpha)|H| = \left(1 - \frac{\rho}{2}\right) |H| \geq \frac{|H|}{2}, 
\]
since $\rho \leq 1$.

In particular, there are at least
\begin{align*}
    \frac{|J|}{t}\cdot \frac{|H|}{2}=\frac{|M_{z}|}{2t}\ge\frac{|W|}{2t}
\end{align*}
many edges in $G^2[M_{z}]$ between $A'$ and $B'$ and hence $\frac{|W|}{2t} \geq \frac{|W|}{t^2}$ many paths of length 2 between $A'$ and $B'$ in $G$.
\end{proof}

\begin{figure}[H]
\centering
\includegraphics[width=0.6\textwidth]{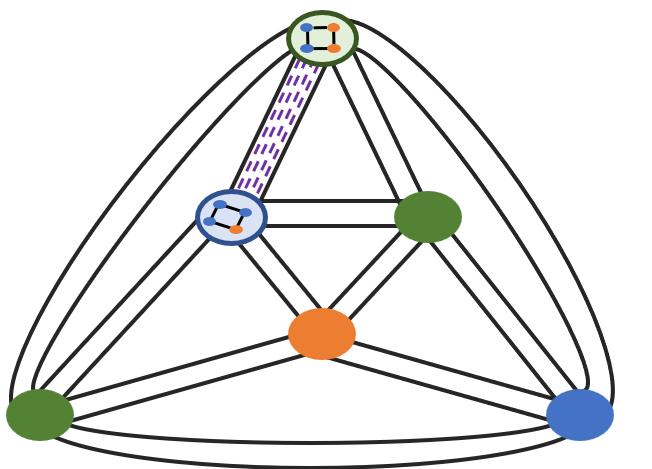}
\caption{A visualisation of the graph $G^2[M_{z}]$ in the case $t=4,z=2$ and $s=2$. Here, each circle is a copy of $H(t-z,s)$ and the black tubes represent the edges of $J(t,z)$. A pair of copies of $H(t-z,s)$ have been displayed, where the vertices of $M_{z}$ inside each copy have been coloured according to whether they lie in $A'$ (blue) or $B'$ (orange). Inside the black tube corresponding to the edge of $J(t,z)$ between this pair of vertices there is some matching, represented by the dashed (purple) edges. Note that this matching may not respect the structure of the $H$ copies in each circle. Each vertex $I$ of $J(t,z)$ is then coloured according to whether it is $A'$-dominated (blue), $B'$-dominated (orange) or evenly balanced (green), corresponding to $\chi(I)=1,2,$ and $3$, respectively.}
\label{fig:sub1}
\end{figure}

We note that, in terms of showing that the Erd\H{o}s-R\'{e}nyi component phenomenon does not hold in irregular product graphs, a simple application of Lemma \ref{t^k} is sufficient. Indeed, if the $G^{(i)}$ are all irregular and of bounded order, then it is not hard to show that there is some $\epsilon(C) >0$ such that $G$ must contain a vertex of degree at least $(1+2\epsilon)d$, and hence by Lemma \ref{t^k} at a subcritical probability $p=\frac{1-\epsilon}{d}$, with positive probability $G_p$ will contain a component of order $d^k$ for any fixed integer $k>0$, and so not all components in $G_p$ have linear order (in $d$) throughout the subcritical regime. However, Theorem \ref{many stars} shows that in the particular case of a product of many stars much more is true --- in parts of the subcritical regime \textbf{whp}  $G_p$ will even contain a component of order $|G|^{1-o_s(1)}$.

\section{Discussion}\label{s:discussion}
In what follows we will use $G$ to refer to the product graph, $t$ for its dimension, $d$ for its average degree, $\delta$ for its minimum degree and $C$ for the maximum degree of the base graphs. Let us briefly summarise the results on the typical component structure of percolated product graphs with probability close to $p=\frac{1}{d}$ which follow from Theorems \ref{product graphs} and \ref{regular product graphs}, and Theorems \ref{unbounded degree}, \ref{isoperimetric} and \ref{many stars}. There are three main assumptions which we work with:
\begin{description}[leftmargin=!,labelwidth=\widthof{\bfseries \textbf{(BO)}}]
\item[\textbf{(BO)}] The base graphs have bounded order;
\item[\textbf{(BD)}] The base graphs have bounded maximum degree;
\item[\textbf{(R)}] The base graphs are regular.
\end{description}
Note that clearly \textbf{(BO)} implies \textbf{(BD)}. 

Lichev \cite{L22} showed that \textbf{(BD)}, together with some mild isoperimetric assumptions, is sufficient to show the existence of a linear sized component in the supercritical regime, and to bound the order of the largest component in the subcritical regime. Whilst this bound on the order of the largest component in the subcritical regime was only polynomial in the size of the host graph, we note that Theorem \ref{many stars} implies that this bound is in fact close to optimal.

Indeed, whilst Theorem \ref{product graphs} implies that when $p=\frac{1- \epsilon}{d}$ \textbf{whp} the largest component of $G_p$ has order at most $\exp\left( - \frac{\epsilon^2 t}{9 C^2} \right) |G|$, by  Theorem \ref{many stars}, there is a choice of $G$ such that \textbf{whp} $G_p$ contains a component of order at least $\exp\left(-\Theta( t) \right) |G|$. In particular, \textbf{(BD)} alone is not sufficient to show that the percolated subgraph exhibits the Erd\H{o}s-R\'{e}nyi component phenomenon in the subcritical regime (which requires that the largest component in the subcritical regime is of logarithmic order in $|G|$).

Furthermore, Theorem \ref{isoperimetric} further strengthens Lichev's results in \cite{L22} by greatly weakening the isoperimetric assumptions and showing that at least in the supercritical regime the order of the largest component does behave quantitatively similar to the Erd\H{o}s-R\'{e}nyi random graph. As for the order of the second-largest component, Theorem \ref{isoperimetric} implies it is $o(|G|)$.
\begin{question}\label{q: BD poly 2nd comp}
Is there an example of a product graph $G$ which satisfies \textbf{(BD)}, and some mild isoperimetric assumptions, for which the second-largest component in the supercritical regime has polynomial order in $|G|$?
\end{question}
As demonstrated by Theorem \ref{regular product graphs}, \textbf{(BO)} and \textbf{(R)} are sufficient to guarantee that the percolated subgraph exhibits the Erd\H{o}s-R\'{e}nyi component phenomenon, and Theorem \ref{many stars} shows that \textbf{(R)} is necessary, in the sense that \textbf{(BO)} without the regularity assumption does not suffice to imply that the Erd\H{o}s-R\'{e}nyi component phenomenon holds. However, it remains an interesting open question as to whether \textbf{(BD)} and \textbf{(R)}, together with some mild isoperimetric assumptions, are themselves sufficient. For example, the following question is still open.
\begin{question}\label{q: BD and R}
Let $C>1$ be a constant, let $\alpha>0$ be a constant and let $G=\square_{j=1}^{t}G^{(j)}$ be a product graph where $G^{(i)}$ is regular, $1 \leq \left|d\left(G^{(i)}\right)\right| \leq C$ for each $i \in [t]$ and $i(G) \geq \alpha$. Does the phase transition that $G_p$ undergoes around $p= \frac{1}{d}$ exhibit the Erd\H{o}s-R\'enyi component phenomenon?
\end{question}

Note that Theorem \ref{unbounded degree} shows that at the very least \textbf{(BD)} is necessary to show the existence of a linear sized component in the supercritical regime, even under some quite strong isoperimetric constraints. It is perhaps interesting to ask what the limit of this pathological behaviour is. Indeed, the problem here seems to be an abundance of vertices of low degree (where the percolation is locally subcritical), which would suggest that at the very least a probability significantly larger than $p = \frac{1}{\delta}$ would be sufficient to show the existence of a linear sized component.

\begin{conjecture}\label{c:min deg perc}
Let $G$ be a high-dimensional product graph whose isoperimetric constant is at least inverse polynomial in the dimension, let $\epsilon >0$, let $\delta\coloneqq\delta(G)$ be the minimum degree of $G$ and let $p = \frac{1+\epsilon}{\delta}$. Then whp $G_p$ contains a component of size at least $(y(\epsilon) - o(1))|G|$, where $y(\epsilon)$ is defined according to \eqref{survival prob}.
\end{conjecture}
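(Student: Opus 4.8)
The plan is to follow the two-step framework of Section \ref{sec bounded degree}, first establishing the statement under the additional hypothesis that the base graphs have bounded maximum degree $C$ (so that $\delta \ge t$, $\Delta(G)\le Ct$, and Lichev's observation \eqref{Lichev's observation} is available), and then discussing the obstacle to removing this hypothesis. The point of normalising by $\delta$ rather than by $d$ is that, by definition, \emph{every} vertex $v$ satisfies $d_G(v)\cdot p=\frac{(1+\epsilon)d_G(v)}{\delta}\ge 1+\epsilon$, so $G_p$ looks locally supercritical at every vertex; in particular, unlike in the proof of Theorem \ref{isoperimetric}, there is no atypical-degree set to discard, and one should expect a lower bound of precisely $(y(\epsilon)-o(1))|G|$, since a vertex of degree larger than $\delta$ only helps.

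Write $i(G)\ge t^{-\gamma}$ for the constant $\gamma$ from the hypothesis, and argue via a two-round exposure $G_p=G_{p_1}\cup G_{p_2}$ with $p_1=\frac{1+\epsilon-o(1)}{\delta}$ and $p_2=\frac{1}{\delta\log\delta}$, as in the proof of Theorem \ref{isoperimetric}. Since $p_1\ge\frac{1+\epsilon-o(1)}{d_G(v)}$ for every $v$, Lemma \ref{t^k} (with an appropriate choice of $k$) shows that each vertex lies in a component of $G_{p_1}$ of super-polynomial order $\delta^{\omega(1)}$ with probability at least $y(\epsilon)-o(1)$; applying Lemma \ref{azuma} to the edge-exposure martingale on $G_{p_1}$ (a single edge switch changes the number of such vertices by at most $2\delta^{\omega(1)}=2^{o(t)}$, while $|G|\ge 2^t$) then gives that \textbf{whp} the set $W$ of vertices lying in these ``big'' components satisfies $|W|\ge(y(\epsilon)-o(1))|G|$. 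For the sprinkling step, Lemma \ref{density lemma} applied with its parameter equal to $\delta$ — legitimate since every vertex has degree at least $\delta$ and $p_1\ge\frac{1+\epsilon-o(1)}{\delta}$ — gives that \textbf{whp} all but an $\exp(-\delta^{3/2})$-fraction of the vertices lie within distance $2$ of $W$. Then, for any $G_{p_1}$-component-respecting partition $W=A\cup B$ with $|A|\ge|W|/t$, thickening $A$ and $B$ to their distance-$2$ neighbourhoods $A',B'$ covers all but an $o(t^{-\gamma})$-fraction of $G$, and the hypothesis $i(G)\ge t^{-\gamma}$ yields $\Omega(t^{-\gamma-1}|W|)$ edges between $A'$ and $B'$; these extend (by walking back to $A$ and $B$) and thin to at least $|W|/\mathrm{poly}(t)$ edge-disjoint $A$--$B$ paths of length at most $5$. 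Lemma \ref{l:sprinkling}, applied with $H=G_{p_1}$, $X=W$, $p=p_2$, $c=\frac1t$, $m=5$, $r=\mathrm{poly}(t)$ and $k=\delta^{\omega(1)}\gg rp_2^{-5}$, then produces a component of $G_p=G_{p_1}\cup G_{p_2}$ containing $(1-o(1))|W|=(y(\epsilon)-o(1))|G|$ vertices, completing the bounded-degree case.

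The hard part is removing the bounded-degree hypothesis, which is precisely the regime the conjecture is designed to capture. Both Lemma \ref{t^k} and Lemma \ref{density lemma} rely essentially on \eqref{Lichev's observation}, which fails without a bound on $\Delta(G^{(j)})$: if the local exploration builds a connected set passing through a coordinate $i$ in which some base-graph vertex has very large degree, then projecting out coordinate $i$ — which the Projection Lemma forces us to do in order to separate the explored vertices into disjoint projections — can destroy almost all of a vertex's degree, and the inductive step collapses. The natural remedy is an adaptive exploration: for a vertex $u$, if a single coordinate $i$ already contributes $\Omega(\delta)$ to $d_G(u)$, then grow the component inside $G^{(i)}$ itself, which is locally supercritical at $u_i$ for this $p$ since $d_{G^{(i)}}(u_i)\cdot p\ge 1+\epsilon$; otherwise grow inside the projection onto the coordinates of small degree, which still carries a $(1-o(1))$-fraction of the at least $\delta$ neighbours of $u$. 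Making this case analysis interact cleanly with the inductive projection scheme behind Lemma \ref{t^k}, and reproving a density statement in the spirit of Lemma \ref{density lemma} in this setting, is where the genuine work lies; the isoperimetric input and the final sprinkling step should then go through much as above.
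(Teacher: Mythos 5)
The statement you are addressing is \emph{Conjecture} \ref{c:min deg perc}, which the paper explicitly leaves open (see the discussion in Section \ref{s:discussion} and the entry ``Conjecture \ref{c:min deg perc}'' in the summary table of Figure \ref{f:summary}); the paper offers no proof, so there is nothing to compare your argument against. Your proposal is, by your own account, not a complete proof of the conjecture either: you prove a special case and sketch where the genuine difficulty lies.

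Your assessment of the situation is accurate. Under the additional hypothesis that the base graphs have bounded degree, your outline is essentially the proof of Theorem \ref{isoperimetric} re-run with the observation that every vertex (not just the typical ones) satisfies $d_G(v)\,p \geq 1+\epsilon$, which dispenses with the need to discard a low-degree exceptional set. One subtlety you gloss over: Lemma \ref{t^k} requires $\epsilon$ to be ``sufficiently small'', but for a vertex $v$ with $d_G(v)\gg\delta$ the effective parameter $d_G(v)\,p - 1$ can be as large as $(1+\epsilon)C-1$, which is not small; you need a monotonicity/coupling step (explore only a random $\frac{(1+\epsilon)\delta}{d_G(v)}$-fraction of the edges at such a vertex, or equivalently couple with a smaller $p'$) before Lemma \ref{t^k} is literally applicable. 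This is a minor fix and does not affect the structure of the argument. You also correctly identify the real obstruction to the general conjecture: Lemmas \ref{t^k} and \ref{density lemma} both lean crucially on \eqref{Lichev's observation}, which requires a bound on $\max_j\Delta(G^{(j)})$; without it, a single high-degree coordinate can carry essentially all of a vertex's degree, and projecting that coordinate away (as Lemma \ref{projection lemma} may force) destroys local supercriticality. Your proposed ``adaptive exploration'' remedy --- growing inside $G^{(i)}$ when coordinate $i$ dominates the degree, and inside the small-degree projection otherwise --- is a reasonable direction, but as you acknowledge, making this interact with the inductive projection scheme and reproving a density lemma is exactly the open work; this is why the statement is a conjecture and not a theorem.
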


Finally, in the case of $G(d+1,p)$, Theorem \ref{ER thm} can be sharpened to show that, between the subcritical and the supercritical phases, there is a phase when the largest two components are both of polynomial order $|G|^{\gamma}$ for some $\gamma \leq \frac{2}{3}$, and then a phase when the largest component has order $|G|^{\gamma}$ for some $\gamma > \frac{2}{3}$ and the second-largest component has order at most $|G|^{\frac{2}{3}}$, although this whole transition happens in a very small window around $\frac{1}{d}$ (see, for example, \cite{FK16}).

It would be interesting to know how the component structure of the graph in Theorem \ref{many stars} develops as $p$ increases, and in particular at what point typically a unique giant component emerges, and when a linear sized component emerges.
\begin{question}\label{q:many stars transition}
Let $s$ be a sufficiently large integer and let $\epsilon>0$ be a sufficiently small constant, let $G^{(i)}=S(1,s)$ for every $1\le i \le t$ and let $G=\square_{i=1}^tG^{(i)}$. Let us write $L_1$ and $L_2$ for the largest and second-largest component, respectively, of $G_p$.
\begin{itemize}
\item For which $p$ is it true that \textbf{whp} $|L_2| = o(|L_1|)$? 
\item For which $p$ is it true that \textbf{whp} $|L_1| = \Theta(|G|)$?
\item Does there exist a probability $p$ and a constant $\gamma  >\frac{2}{3}$ such that \textbf{whp} both $|L_1|$ and $|L_2|$ have order at least $|G|^{\gamma}$?
\end{itemize}
\end{question}

Furthermore, as mentioned after the proof of Theorem \ref{many stars}, for \textit{any} family of bounded-order \textit{irregular} base graphs, the percolated product graph will not exhibit the Erd\H{o}s-R\'enyi phenomenon in the subcritical regime, containing a component of order at least a superlinear polynomial (in $d$) for certain subcritical values of $p$. However, in the particular case of Theorem \ref{many stars}, we in fact show that the largest component throughout some part of the sparse subcritical regime, where $p=\Theta(\frac{1}{d})$, is of polynomial order in $|G|$. It would be interesting to know if this behaviour is universal in high-dimensional products of bounded-order irregular graphs.
\begin{question} \label{q: generalised many stars}
For all $i\in [t]$, let $G^{(i)}$ be an irregular connected graph of order at most $C>0$. Let $G=\square_{i=1}^tG^{(i)}$. Let $\epsilon>0$ be a small enough constant, let $d\coloneqq d(G)$ be the average degree of $G$ and let $p= \frac{1-\epsilon}{d}$. Does there exist a $c(\epsilon,C)$ such that whp the largest component in $G_p$ has order at least $|G|^c$?
\end{question}

The table in Figure \ref{f:summary} summarises what is known about the order of the largest and second-largest component in $G_p$ for various values of $p$ and under various combinations of the assumption \textbf{(BD)}, \textbf{(BO)} and \textbf{(R)}, and indicates some combinations of probabilities and assumptions where open questions remain.

\begin{figure}[H]
\begin{table}[H]
\centering
\resizebox{\columnwidth}{!}{

\def\arraystretch{1.5}
\begin{tabular}{||c||c|c|c||} 
 \hline
 & $p=\frac{1-\epsilon}{d}$  & $p=\frac{1+\epsilon}{d}$ & $p=\frac{1+\epsilon}{\delta}$ \\ [0.5ex] 
 \hline\hline
  $\emptyset$ & -& 
  $\exists G$ with $|L_1|=o(|G|)$ (Theorem \ref{unbounded degree}) & Conjecture \ref{c:min deg perc}  \\
  \hline
  \textbf{(BD)} & $|L_1|\le \exp\left(-\frac{\epsilon^2t}{9C^2}\right)|G|$ (Theorem \ref{product graphs})& $|L_1|=\left(y(\epsilon)+o(1)\right)|G|$ (Theorem \ref{isoperimetric}) & - \\
  & $\exists G$ with $|L_1|\ge \exp\left(-\Theta( t) \right) |G|$ (Theorem \ref{many stars}) & $|L_2|=o(|G|)$ (Theorem \ref{isoperimetric}), Question \ref{q: BD poly 2nd comp} & \\
  \hline
  \textbf{(BD)} \& \textbf{(R)} & Question \ref{q: BD and R} & $|L_1|=\left(y(\epsilon)+o(1)\right)|G|$ (Theorem \ref{isoperimetric}) & - \\
  & & $|L_2|=o(|G|)$ (Theorem \ref{isoperimetric}), Question \ref{q: BD and R} & \\
  \hline
  \textbf{(BO)} \& \textbf{(R)} & $|L_1|=O(d)$ (Theorem \ref{regular product graphs}) & $|L_1|=\left(y(\epsilon)+o(1)\right)|G|$ (Theorem \ref{regular product graphs}) & - \\
  & & $|L_2|=O(d)$ (Theorem \ref{regular product graphs})& \\
  \hline
  \textbf{(BO)} \& $\neg$\textbf{(R)} & Question \ref{q: generalised many stars} & - & -  \\
  \hline
  \hline
\end{tabular}
}
\end{table}
\caption{A summary of the typical component structure of percolated high-dimensional product graphs}\label{f:summary}
\end{figure}

\bibliographystyle{plain}
\bibliography{perc}
\end{document}